\documentclass[12pt]{article}
\textwidth 16.5cm
\textheight 22.5cm
\oddsidemargin 0pt
\topmargin -1cm

\RequirePackage[OT1]{fontenc}
\RequirePackage{amsthm,amsmath,amssymb,subcaption,graphicx,epstopdf,enumerate,lmodern}
\RequirePackage[round,colon,authoryear]{natbib}
\RequirePackage{bigints}
\RequirePackage[colorlinks,citecolor=blue,urlcolor=blue]{hyperref}
\usepackage{bm,color,fancyvrb,xcolor,mathtools}
\usepackage{amscd}
\usepackage{mathrsfs}
\usepackage{bbm}

\def\eps{\varepsilon}
\parindent=8mm
\frenchspacing
\font\tencmmib=cmmib10 \skewchar\tencmmib '60
\newfam\cmmibfam
\textfont\cmmibfam=\tencmmib


\def\lessim{\ \lower4pt\hbox{$
		\buildrel{\displaystyle <}\over\sim$}\ }
\def\gessim{\ \lower4pt\hbox{$\buildrel{\displaystyle >}
		\over\sim$}\ }

\usepackage{algorithm,algorithmicx,algpseudocode}
\usepackage{tensor}

\newtheorem{theorem}{Theorem}[section]
\newtheorem{proposition}[theorem]{Proposition}
\newtheorem{lemma}{Lemma}

\DeclarePairedDelimiter{\norm}{\lVert}{\rVert}
\DeclarePairedDelimiter{\abs}{\lvert}{\rvert}

%

\providecommand{\abs}[1]{\left\lvert#1\right\rvert}
\providecommand{\norm}[1]{\left\lVert#1\right\rVert}
\renewcommand{\hat}{\widehat}

\renewcommand{\hat}{\widehat}

\newcommand{\bfm}[1]{\ensuremath{\mathbf{#1}}}
\newcommand\numberthis{\addtocounter{equation}{1}\tag{\theequation}}

\def\ba{\bfm a}

   \def\bE{\bfm E}  \def\EE{\mathbb{E}}

   \def\bM{\bfm M}  
     \def\NN{\mathbb{N}}
     
     \def\PP{\mathbb{P}}
     
     \def\RR{\mathbb{R}}
   \def\bS{\bfm S}  \def\SS{\mathbb{S}}
   \def\bT{\bfm T}  
\def\bu{\bfm u}   \def\bU{\bfm U}  
\def\bv{\bfm v}   \def\bV{\bfm V}  
\def\bw{\bfm w}     
\def\bx{\bfm x}   \def\bX{\bfm X}  
   \def\bY{\bfm Y}  
   \def\bZ{\bfm Z}

\def\calL{{\cal  L}}

\def\calS{{\cal  S}}





\DeclareMathOperator{\argmin}{argmin}

\DeclareMathOperator{\diag}{diag}

\DeclareMathOperator{\Var}{Var}

\DeclareMathOperator{\tr}{tr}


\def\eps{\varepsilon}

\def\dsum{\displaystyle\sum}
\def\newpage{\vfill\eject}

\newcommand{\vertiii}[1]{{\left\vert\kern-0.25ex\left\vert\kern-0.25ex\left\vert #1 
		\right\vert\kern-0.25ex\right\vert\kern-0.25ex\right\vert}}

\usepackage{tikz}
\usetikzlibrary{decorations.pathreplacing,calc, patterns}

\def\scrE{\mathscr{E}}
\def\scrX{\mathscr{X}}
\def\scrT{\mathscr{T}}
\DeclareMathOperator{\argmax}{argmax}

\begin{document}
	
	\title{On Estimating Rank-One Spiked Tensors in the Presence of Heavy Tailed Errors$^\ast$}
	\author{Arnab Auddy and Ming Yuan\\
		Department of Statistics\\
		Columbia University}
	\date{(\today)}
	
	\maketitle

\begin{abstract}
In this paper, we study the estimation of a rank-one spiked tensor in the presence of heavy tailed noise. Our results highlight some of the fundamental similarities and differences in the tradeoff between statistical and computational efficiencies under heavy tailed and Gaussian noise. In particular, we show that, for $p$th order tensors, the tradeoff manifests in an identical fashion as the Gaussian case when the noise has finite $4(p-1)$th moment. The difference in signal strength requirements, with or without computational constraints, for us to estimate the singular vectors at the optimal rate, interestingly, narrows for noise with heavier tails and vanishes when the noise only has finite fourth moment. Moreover, if the noise has less than fourth moment, tensor SVD, perhaps the most natural approach, is suboptimal even though it is computationally intractable. Our analysis exploits a close connection between estimating the rank-one spikes and the spectral norm of a random tensor with iid entries. In particular, we show that the order of the spectral norm of a random tensor can be precisely characterized by the moment of its entries, generalizing classical results for random matrices. In addition to the theoretical guarantees, we propose estimation procedures for the heavy tailed regime, which are easy to implement and efficient to run. Numerical experiments are presented to demonstrate their practical merits.
\end{abstract}

	\footnotetext[1]{
		This research was supported by NSF Grant DMS-2015285.}

\newpage	
\section{Introduction}
Singular value decomposition (SVD) and principal component analysis (PCA) are among the most commonly used procedures in multivariate data analysis. See, e.g., \cite{tAND84a, pcabook}. By seeking low rank approximations to a data matrix, they allow us to reduce the dimensionality of the data, and oftentimes serve as a useful first step to capture the essential features in the data. While both were first developed for the analysis of data matrices, extensions to higher order tensors have also been developed in recent years. See, e.g., \cite{de2000multilinear, lu2008mpca,liu2017characterizing}. More generally, low rank tensor methods have exploded in popularity in numerous areas involving high dimensional data analysis. See \cite{kolda2009tensor,anandkumar2014tensor,cichocki2015tensor,sidiropoulos2017tensor} for recent reviews.

To fix ideas, consider a rank-one spiked tensor model
\begin{equation}
\label{eq:spike}
\mathscr{X}=\lambda \bu_1\otimes \bu_2\otimes\dots\otimes \bu_p+\mathscr{E},
\end{equation}
where the ``singular value'' $\lambda\ge 0$ is a scalar, and ``singular vectors'' $\bu_k$s are unit length vectors in $\RR^d$, and $\mathscr{E}\in \RR^{d\times \cdots\times d}$ is a noise tensor whose entries are independent and identically distributed random variables with zero mean and unit variance. The goal is to estimate the singular vectors after observing $\mathscr{X}$ in a high dimensional setting where $d$ is large. In particular, the special case when the noise tensor $\mathscr{E}$ consists of independent standard normal entries has attracted much attention in recent years, and an intriguing gap in statistical efficiencies with or without computational constraints is observed. It can be shown that tensor SVD that seeks the best rank-one approximation to $\mathscr{X}$ yields a consistent estimate of the singular vectors whenever $\lambda\gg d^{1/2}$. Hereafter, we say an estimate $\hat{\bu}_k$ of $\bu_k$ is consistent iff $\sin\angle(\hat{\bu}_k,\bu_k)\to 0$ as $d\to\infty$ where $\angle(\hat{\bu}_k,\bu_k)$ is the angle between two vectors $\hat{\bu}_k$ and $\bu_k$ taking value in $[0,\pi/2]$. However, computing the best rank-one approximation is known to be NP hard in general \citep[see, e.g.,][]{hackbusch2012tensor, hillar2013}. On the other hand, consistent yet computationally tractable estimates are only known when $\lambda \gtrsim d^{p/4}$. Hereafter $a\gtrsim b$ means that there is a constant $C$ independent of $d$ such that $a\ge Cb$. More specifically, it can be achieved by power iteration initialized with higher order SVD \citep[HOSVD; see, e.g.,][]{de2000best, de2000multilinear}. While a rigorous argument remains elusive, it is widely conjectured that $d^{p/4}$ is the tight algorithmic threshold below which no consistent estimates can be computed in polynomial time. It is instructive to consider the case when there are independent Gaussian errors, and the signal strength $\lambda\sim d^\xi$. These results can then be summarized by the following diagram. When $\xi>1/2$, the tensor SVD estimate $\hat{\bu}^{\rm SVD}_k$ is consistent, and indeed can be shown to be minimax rate optimal. Meanwhile, we only know of polynomial time computable estimators that are consistent if $\xi>p/4$. The shaded region between $\xi=1/2$ and $\xi=p/4$ in Figure \ref{fig:gauss} therefore signifies the tradeoff between statistical and computational efficiencies. 
\begin{figure}[htbp]
\begin{center}
\begin{tikzpicture}[scale=0.75]
\fill[black!30!white,fill opacity=0.8](0,-1.3) rectangle (2,0);
\path[-latex,draw,line width=2pt,black!80!white] (0,0) -- (8,0);
\path[draw,line width=2pt,black!80!white] (0,-1.5) -- (0,1.5);
\path[draw,line width=2pt,black!80!white] (2,-1.5) -- (2,0);
\path[draw,dashed,line width=2pt,black!80!white] (0,0) -- (-1.5,0);
\draw[thick,black](0,-1.8)node{$\xi={1 \over 2}$};
\draw[thick,black](2,-1.8)node{$\xi={p \over 4}$};
\draw[thick,black](8.2,0)node{\scriptsize $\xi$};
\draw[thick,black](5,-0.75)node{\small{$\sin\angle(\hat{\bu}_k,\bu_k)\sim d^{1/2-\xi}$}};
\draw[thick,black](3,0.75)node{\small{$\sin\angle(\hat{\bu}^{\rm SVD}_k,\bu_k)\sim d^{1/2-\xi}$}};
\draw[thick,black](10,0.75)node{Tensor SVD};
\draw[thick,black](12,-0.75)node{Computationally Tractable};
\draw[thick,black,align=center](-3,0)node{No consistent\\ estimator};
\draw[thick,black](1,-0.75)node{\scalebox{0.8}{\textcolor{blue!60!white}{}}};
\end{tikzpicture}
\end{center}
\caption{Tradeoff between statistical and computational efficiencies in estimating spiked rank-one tensors under Gaussian noise.}\label{fig:gauss}
\end{figure}
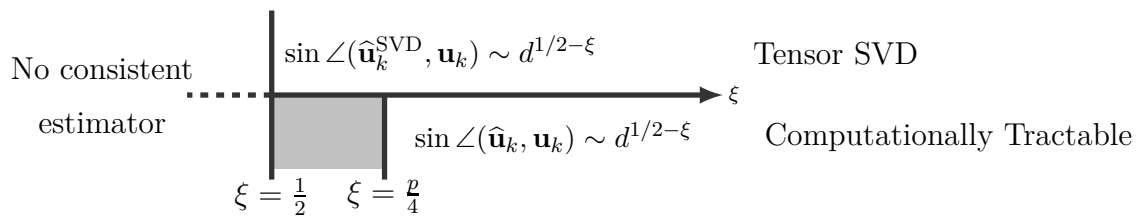
See, e.g., \cite{richard2014statistical, hopkins2015tensor, hopkins2016fast, liu2017characterizing, arous2019landscape} among many others. These observations can also be generalized beyond rank-one signals. See, e.g., \cite{zhang2018tensor, auddy2020perturbation}.

The Gaussian, or more generally subgaussian, assumption on the noise tensor $\scrE$, however, could be too restrictive in practice and neglecting departures from such assumptions could lead to erroneous results. For example, \cite{eklund2016cluster} showed how using Gaussian model based methods lead to very high false positive rate in fMRI studies. \cite{purdom2005error} and \cite{ringberg2007sensitivity} observed similar phenomena in genomic studies and anomaly detection respectively. Unfortunately, very little is known about the fundamental limit for estimating the rank-one spikes and the effect of computational constraints in the presence of heavy-tailed noise. A notable exception is the recent work of \cite{ding2020estimating} who developed polynomial time algorithms to recover the singular vectors $\bu_k$s through self avoiding walks and random coloring. They assume that the singular vectors are randomly sampled and therefore provide an average case analysis of their algorithms. More specifically, for third order tensor ($p=3$), if the entries of the error tensor has finite second moment, then their algorithm produces weak recovery when $\lambda\gtrsim d^{3/4}$. Moreover, their algorithm yields consistent estimates of the singular vectors if higher order moment conditions, e.g., finite 12th moment, are satisfied. Our work is inspired by this earlier development and aims at developing more practical algorithms for estimating spiked rank-one tensors and precise characterization of how the tradeoff between computational and statistical efficiency manifests beyond subgaussian errors. More specifically, we show that there are polynomial time computable estimates of $\bu_k$ that are not only consistent but also rate optimal whenever $\lambda\gg d^{p/4}\cdot{\rm polylog}(d)$ where ${\rm polylog}(d)$ is a certain polynomial of $\log d$.

The most natural approach to, and a useful benchmark for, estimating $\bu_k$s is the tensor SVD. Denote by $\hat{\bu}_k^{\rm SVD}$s the tensor SVD estimates of $\bu_k$s. We prove that if the entries of $\scrE$ have finite $\alpha$th moment for some $\alpha>4$, then with high probability,
\begin{equation}
\label{eq:optrate}
\max_{1\le k\le p}\sin\angle(\hat{\bu}_k^{\rm SVD},\bu_k)=O_p\left({\sqrt{d}\over \lambda}\right),
\end{equation}
as $d\to \infty$, provided that
$$
\lambda\gtrsim \left[d^{1/2}(\log d)^{1/2}+d^{(p-1)/\alpha+1/4}(\log d)^{3/2}\right].
$$
The above requirement on the signal-to-noise ratio can also be shown to be optimal, up to the logarithmic factor. More specifically, if the entries of $\scrE$ do not have finite $\alpha$th moment, then
$$
\sin\angle(\hat{\bu}_k^{\rm SVD},\bu_k)\to_p 1,
$$
for any
$$
\lambda\lesssim \left(d^{1/2}+d^{(p-1)/\alpha+1/4}\right).
$$
It is worth noting that the bounds on $\lambda$ highlights the intuitive facts that, under the same moment condition, estimating $\bu_k$s tends to be harder for higher order tensors, e.g., larger $p$; and for tensors of the same order, estimating $\bu_k$s tends to be easier with higher order moment, e.g., larger $\alpha$.

It is, however, well known that the tensor SVD is computationally infeasible in general. A common strategy to alleviate the computational expenses of the tensor SVD is through power iteration with spectral initialization. The rationale behind this is the presumptive optimality of the tensor SVD. A good initialization may ensure the resulting estimate, computable in polynomial time, inherits such optimality. We show that this is indeed the case: if $\lambda\gtrsim d^{p/4}$, then this yields a polynomial time computable estimate $\hat{\bu}_k$ such that
$$
\max_{1\le k\le p}\sin\angle(\hat{\bu}_k,\bu_k)=O_p\left({\sqrt{d}\over \lambda}\right).
$$
The signal strength requirement for polynomial time computable methods matches that under Gaussian noise and is strictly stronger than that for the tensor SVD estimate. Therefore, the tradeoff between computational and statistical efficiency remains. In particular, if we consider the case when $\lambda\sim d^\xi$, then our observations can be summarized by the diagram of Figure \ref{fig:a4}. The gap between the signal-to-noise ratio requirement for tensor SVD and polynomial computable estimators is the same as in the Gaussian case when $\alpha\ge 4(p-1)$ but narrows as $\alpha$ decreases to 4.
\input{norm4up.tex}

A more intriguing phenomenon occurs when the entries of $\scrE$ only has finite $\alpha$th moment for some $2<\alpha<4$. In this situation, we prove that \eqref{eq:optrate} holds if
$$
\lambda\gtrsim d^{{p-1\over \alpha}+{1\over 2}}(\log d)^{3/2}
$$
and the tensor SVD estimate $\hat{\bu}_k^{\rm SVD}$ is asymptotically perpendicular to $\bu_k$ of
$$\lambda\lesssim d^{p/\alpha}.$$
This can be summarized by the diagram of Figure \ref{fig:a2}. Interestingly, the tensor SVD is actually suboptimal in this case and there is an alternative estimator that is both computationally tractable and can attain the optimal rate of convergence whenever
$$
\lambda\gtrsim d^{p/4}(\log d)^{1/4}.
$$
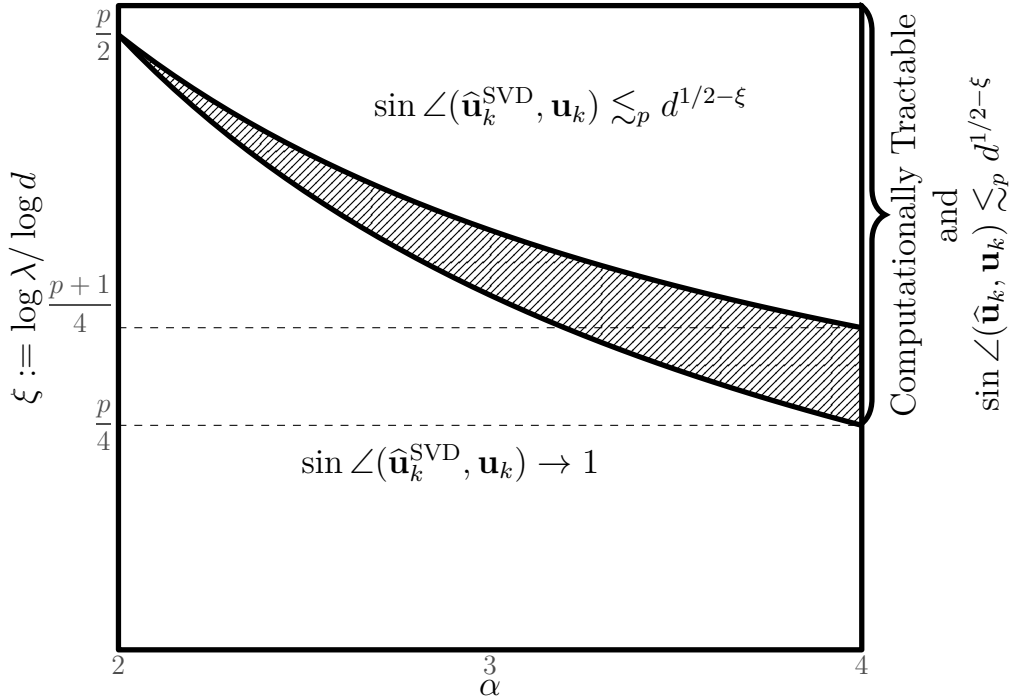
\begin{figure}[htbp]
\centering
\begin{tikzpicture}[x=.7pt,y=1pt,scale=0.75]
\definecolor{fillColor}{RGB}{255,255,255}
\begin{scope}
\definecolor{drawColor}{RGB}{255,255,255}
\definecolor{fillColor}{RGB}{255,255,255}

\path[draw=drawColor,line width= 0.6pt,line join=round,line cap=round,fill=fillColor] (  0.00,  0.00) rectangle (628.75,361.35);
\end{scope}
\begin{scope}
\definecolor{drawColor}{RGB}{0,0,0}

\path[draw=drawColor,line width= 2pt,line join=round] ( 60.93,341.07) --
	( 74.32,331.46) --
	( 87.71,322.30) --
	(101.10,313.57) --
	(114.49,305.24) --
	(127.88,297.28) --
	(141.26,289.66) --
	(154.65,282.37) --
	(168.04,275.38) --
	(181.43,268.68) --
	(194.82,262.24) --
	(208.21,256.06) --
	(221.59,250.11) --
	(234.98,244.39) --
	(248.37,238.89) --
	(261.76,233.58) --
	(275.15,228.46) --
	(288.54,223.52) --
	(301.93,218.75) --
	(315.31,214.14) --
	(328.70,209.69) --
	(342.09,205.38) --
	(355.48,201.21) --
	(368.87,197.18) --
	(382.26,193.27) --
	(395.64,189.48) --
	(409.03,185.80) --
	(422.42,182.24) --
	(435.81,178.78) --
	(449.20,175.42) --
	(462.59,172.15) --
	(475.98,168.98) --
	(489.36,165.90) --
	(502.75,162.90) --
	(516.14,159.98) --
	(529.53,157.14) --
	(542.92,154.37) --
	(556.31,151.68) --
	(569.70,149.05) --
	(583.08,146.50) --
	(596.47,144.00);

\path[draw=drawColor,line width= 2pt,line join=round] ( 60.93,341.07) --
	( 74.32,333.86) --
	( 87.71,326.99) --
	(101.10,320.45) --
	(114.49,314.20) --
	(127.88,308.22) --
	(141.26,302.51) --
	(154.65,297.04) --
	(168.04,291.80) --
	(181.43,286.78) --
	(194.82,281.95) --
	(208.21,277.31) --
	(221.59,272.85) --
	(234.98,268.56) --
	(248.37,264.43) --
	(261.76,260.45) --
	(275.15,256.61) --
	(288.54,252.91) --
	(301.93,249.33) --
	(315.31,245.88) --
	(328.70,242.54) --
	(342.09,239.30) --
	(355.48,236.18) --
	(368.87,233.15) --
	(382.26,230.22) --
	(395.64,227.38) --
	(409.03,224.62) --
	(422.42,221.95) --
	(435.81,219.35) --
	(449.20,216.83) --
	(462.59,214.38) --
	(475.98,212.00) --
	(489.36,209.69) --
	(502.75,207.44) --
	(516.14,205.25) --
	(529.53,203.12) --
	(542.92,201.05) --
	(556.31,199.03) --
	(569.70,197.06) --
	(583.08,195.14) --
	(596.47,193.27);
\definecolor{fillColor}{RGB}{190,190,190}

\path[fill=fillColor,pattern=north east lines] ( 60.93,341.07) --
	( 74.32,331.46) --
	( 87.71,322.30) --
	(101.10,313.57) --
	(114.49,305.24) --
	(127.88,297.28) --
	(141.26,289.66) --
	(154.65,282.37) --
	(168.04,275.38) --
	(181.43,268.68) --
	(194.82,262.24) --
	(208.21,256.06) --
	(221.59,250.11) --
	(234.98,244.39) --
	(248.37,238.89) --
	(261.76,233.58) --
	(275.15,228.46) --
	(288.54,223.52) --
	(301.93,218.75) --
	(315.31,214.14) --
	(328.70,209.69) --
	(342.09,205.38) --
	(355.48,201.21) --
	(368.87,197.18) --
	(382.26,193.27) --
	(395.64,189.48) --
	(409.03,185.80) --
	(422.42,182.24) --
	(435.81,178.78) --
	(449.20,175.42) --
	(462.59,172.15) --
	(475.98,168.98) --
	(489.36,165.90) --
	(502.75,162.90) --
	(516.14,159.98) --
	(529.53,157.14) --
	(542.92,154.37) --
	(556.31,151.68) --
	(569.70,149.05) --
	(583.08,146.50) --
	(596.47,144.00) --
	(596.47,193.27) --
	(583.08,195.14) --
	(569.70,197.06) --
	(556.31,199.03) --
	(542.92,201.05) --
	(529.53,203.12) --
	(516.14,205.25) --
	(502.75,207.44) --
	(489.36,209.69) --
	(475.98,212.00) --
	(462.59,214.38) --
	(449.20,216.83) --
	(435.81,219.35) --
	(422.42,221.95) --
	(409.03,224.62) --
	(395.64,227.38) --
	(382.26,230.22) --
	(368.87,233.15) --
	(355.48,236.18) --
	(342.09,239.30) --
	(328.70,242.54) --
	(315.31,245.88) --
	(301.93,249.33) --
	(288.54,252.91) --
	(275.15,256.61) --
	(261.76,260.45) --
	(248.37,264.43) --
	(234.98,268.56) --
	(221.59,272.85) --
	(208.21,277.31) --
	(194.82,281.95) --
	(181.43,286.78) --
	(168.04,291.80) --
	(154.65,297.04) --
	(141.26,302.51) --
	(127.88,308.22) --
	(114.49,314.20) --
	(101.10,320.45) --
	( 87.71,326.99) --
	( 74.32,333.86) --
	( 60.93,341.07) --
	cycle;

\draw [dashed] (60.93,193.27) -- (596.47,193.27);
\draw [dashed] (60.93,144) -- (596.47,144);

\end{scope}
\begin{scope}
\definecolor{drawColor}{RGB}{0,0,0}

\path[draw=drawColor,line width= 2pt,line join=round] ( 60.93, 30.69) --
	( 60.93,355.85) -- (596.47,355.85) -- (596.47,30.69) -- cycle;
\end{scope}
\begin{scope}
\definecolor{drawColor}{gray}{0.30}


\node[text=drawColor,anchor=base east,inner sep=0pt, outer sep=0pt, scale=  0.88] at ( 57.21,140.97) {$\dfrac{p}{4}$};


\node[text=drawColor,anchor=base east,inner sep=0pt, outer sep=0pt, scale=  0.88] at ( 57.21,200.97) {$\dfrac{p+1}{4}$};


\node[text=drawColor,anchor=base east,inner sep=0pt, outer sep=0pt, scale=  0.88] at ( 57.21,338.04) {$\dfrac{p}{2}$};
\end{scope}
%
%
%
%
%
%
%
%
%
%
\begin{scope}
\definecolor{drawColor}{gray}{0.30}

\node[text=drawColor,anchor=base,inner sep=0pt, outer sep=0pt, scale=  0.88] at ( 60.93, 18.68) {2};


\node[text=drawColor,anchor=base,inner sep=0pt, outer sep=0pt, scale=  0.88] at (328.70, 18.68) {3};


\node[text=drawColor,anchor=base,inner sep=0pt, outer sep=0pt, scale=  0.88] at (596.47, 18.68) {4};
\end{scope}
\begin{scope}
\definecolor{drawColor}{RGB}{0,0,0}

\node[text=drawColor,anchor=base,inner sep=0pt, outer sep=0pt, scale=  1.10] at (328.70,  7.64) {$\alpha$};
\end{scope}
\begin{scope}
\definecolor{drawColor}{RGB}{0,0,0}

\node[text=drawColor,rotate= 90.00,anchor=base,inner sep=0pt, outer sep=0pt, scale=  1.10] at (1,213.27) {$\xi:=\log\lambda/\log d$};

\node[text=drawColor,rotate= 90.00,anchor=base,inner sep=0pt, outer sep=0pt, scale=  1.10] at (635,250) {Computationally Tractable};
\node[text=drawColor,rotate= 90.00,anchor=base,inner sep=0pt, outer sep=0pt, scale=  1.10] at (665,250) {and};
\node[text=drawColor,rotate= 90.00,anchor=base,inner sep=0pt, outer sep=0pt, scale=  1.10] at (695,235.97) {$\sin\angle(\hat{\bu}_k,\bu_k)\lesssim_p d^{1/2-\xi}$};

\node[text=drawColor, anchor=base,inner sep=0pt, outer sep=0pt, scale=  1.10] at (300, 120) {$\sin\angle(\hat{\bu}_k^{\rm SVD},\bu_k)\to 1$};

\node[text=drawColor, anchor=base,inner sep=0pt, outer sep=0pt, scale=  1.10] at (380, 300) {$\sin\angle(\hat{\bu}_k^{\rm SVD},\bu_k)\lesssim_p d^{1/2-\xi}$};

\draw [decorate,decoration={brace,amplitude=10pt},xshift=-4pt,yshift=0pt, line width= 2pt] (600,356.5)-- (600,144) node [black,midway,xshift=-0.6cm]{};
\end{scope}
\end{tikzpicture}
\caption{Tradeoff between statistical and computational efficiencies in estimating spiked rank-one tensors when the noise does not have fourth moments.}
\label{fig:a2}
\end{figure}

Due to the suboptimality of tensor SVD, it is doubtful if power iteration would work when $\alpha<4$. To this end, we consider a different estimating strategy. More specifically, our techniques are based on recent developments in the theory of robust estimation of the mean in the presence of heavy tailed errors. These works derive estimators with subgaussian concentration, inspired from the pioneering work of \cite{catoni2012challenging}. The key idea is to reduce the adverse effect of heavy tails through an influence function, and can be extended to matrix estimation. For covariance matrix estimation, \cite{catoni2016pac} and \cite{mendelson2020robust} were some of the first works in this area, although both these approaches involved optimizing over a $d$-dimensional $\eps$-net and thus having exponential time complexity. \cite{avella2018robust} have similar results  with polynomial time, but they too require an extensive search for tuning parameters. We will instead use results on spectrum truncated estimators applied to covariance estimation. \cite{giulini2015pac} described one such method for robust PCA through smooth truncation, based on which \cite{minsker2018sub} and \cite{ke2019user} provided more tractable procedures and general results.

Our results are obtained by exploiting close connections between estimating the rank-one spikes and the spectral norm of a random tensor of iid entries. We show that the order of the spectral norm of a random tensor can be precisely characterized by the moment of its entries, which might be of independent interest. In particular, our result indicates that, up to a logarithmic factor, the norm of the random tensor $\|\scrE\|$ is of the order $\sqrt{d}$ if and only if its entries have finite $4(p-1)$th moment. This can be viewed as a generalization of the classical results for random matrices \citep[see, e.g.,][]{bai1988note, silverstein1989weak}. In deriving these bounds, we used techniques developed for random matrices by \cite{latala2005some} and improved moment bounds of random tensors established earlier by \cite{nguyen2015tensor}.

The rest of the paper is organized as follows. We first develop probabilistic bounds for the spectral norm of a random tensor of iid entries and use these tools to study the performance of the tensor SVD in Section 2. Polynomial time computable estimation schemes are given in Sections 3 and 4 for $\alpha\ge 4$ and for $\alpha\ge 2$ respectively. To corroborate our theoretical development, Section 5 provides simulation studies to further demonstrate the practical merits of the proposed methods. We conclude with a few remarks on the implications and future directions in Section 6. All proofs are relegated to Section 7.

\section{Tensor SVD and Spectral Norm of Random Tensors}\label{sec:tensor_norm}
The most natural approach to estimating the singular vectors is via the tensor SVD. In particular, let
\begin{equation}
\label{eq:defsvd}
(\hat{\bu}_k^{\rm SVD}: 1\le k\le p)=\argmax_{\ba_k\in \SS^{d-1}}\langle \mathscr{X}, \ba_1\otimes\cdots\otimes\ba_p\rangle,
\end{equation}
Here $\SS^{d-1}$ is the unit sphere in $\RR^d$. It is well known that the tensor SVD can be equivalently characterized the best rank-one apprpoximation to $\scrX$ in that
$$
(\hat{\lambda}^{\rm SVD}, \hat{\bu}_k^{\rm SVD}: 1\le k\le p)=\argmax_{\gamma\in \RR,\ba_k\in \SS^{d-1}} \|\scrX-\gamma\ba_1\otimes\cdots\otimes\ba_p\|_{\rm HS},
$$
where $\|\cdot\|_{\rm HS}$ is the Hilbert-Schmidt or Frobenius norm. See, e.g., \cite{zhang2001rank}. The performance of these singular vector estimates is closely related to the spectral norm of the noise tensor:
$$
\|\mathscr{E}\|:=\max_{\ba_k\in \SS^{d-1}} \langle \mathscr{E}, \ba_1\otimes\cdots\otimes\ba_p\rangle.
$$
For example, it is known that
\begin{equation}
\label{eq:daviskahan}
\max_{1\le k\le p}\sin\angle(\hat{\bu}_k^{\rm SVD}, \bu_k)\lesssim {\|\mathscr{E}\|\over \lambda},
\end{equation}
so that $\hat{\bu}_k^{\rm SVD}$s are consistent whenever $\lambda\gg \|\mathscr{E}\|$. See, e.g., \cite{auddy2020perturbation}. To this end, we shall first study the spectral norm of a random tensor consisting of independent and identically distributed entries.

\subsection{Norm of Random Tensors}
The problem of bounding the spectral norm of a random tensor is well-studied in the matrix case, i.e., $p=2$. In particular, \cite{silverstein1989weak} showed that if $\mathscr{E}$ is an iid ensemble, then $\|\mathscr{E}\|$ is of the order $\sqrt{d}$ if and only if its entries have finite (weak) fourth moment. In other words, when $p=2$ and the entries of $\mathscr{E}$ have finite fourth moment, $\hat{\bu}_k^{\rm SVD}$s are consistent if and only if $\lambda\gg \sqrt{d}$. A couple of questions arise naturally. First, is there similar characterization of $\|\mathscr{E}\|$ for higher order tensors? And what happens if the entries of $\mathscr{E}$ have only $\alpha$th moment for $2\le\alpha<4$? The next result aims to settle the first question.

\begin{theorem}
\label{th:moment4}
Let $\mathscr{E}\in \RR^{d\times\cdots\times d}$ be a $p$th order random tensor whose entries are independent copies of a random variable $E$ with mean zero and variance $\sigma^2$. Then there exists a constant $C_p>0$ depending on $p$ only such that for any $\alpha\ge 4$, $\EE|E|^\alpha<\infty$ implies that, with probability at least $1-d^{-\alpha/4+1}$,
$$
\|\mathscr{E}\|\le C_p\sigma \left[\sqrt{d\log d}+d^{\tfrac{p-1}{\alpha}+\tfrac{1}{4}}(\log d)^{\tfrac{3}{2}}\right].
$$
Conversely, there exists another constant $C_p'>0$ depending on $p$ only such that $\EE|E|^\alpha=\infty$ implies that
$$
\|\mathscr{E}\|\ge C'_p\sigma d^{\max\left\{\tfrac{1}{2},\tfrac{p-1}{\alpha}+\tfrac{1}{4}\right\}},\qquad {\rm almost\ surely}.
$$
\end{theorem}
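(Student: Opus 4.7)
My plan is a truncation argument. Set $T := \sigma d^{(p-1)/\alpha + 1/4}$ and decompose $E_\bi = \tilde E_\bi + \mu + R_\bi$, where $\tilde E_\bi := E_\bi\mathbbm{1}\{|E_\bi| \le T\} - \mu$ is mean-zero and bounded by $2T$ with variance at most $\sigma^2$, $\mu := \EE[E\,\mathbbm{1}\{|E|\le T\}]$ is a deterministic bias, and $R_\bi := E_\bi \mathbbm{1}\{|E_\bi|>T\}$ is the tail. Markov's inequality and a union bound give $\PP(\max_\bi |E_\bi| > T)\le d^p\EE|E|^\alpha/T^\alpha \lesssim d^{1-\alpha/4}$, so on the complement event $R\equiv 0$; the rank-one bias tensor $\mu\,(\mathbf{1}\otimes\cdots\otimes\mathbf{1})$ contributes at most $|\mu|d^{p/2}$ to the spectral norm, which (using $|\mu| \leq \EE|E|^\alpha/T^{\alpha-1}$) is of lower order. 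It therefore remains to bound $\|\tilde\scrE\|$, for which I will use a Latala-style moment inequality (extending matrix arguments in Latala (2005) with the aid of tensor-specific ingredients from Nguyen (2015)) of the shape
$$(\EE\|\tilde\scrE\|^q)^{1/q} \;\le\; C_p\bigl(\sigma\sqrt{d q}\;+\;T q^{3/2}\bigr), \qquad q\ge 1.$$
Taking $q\asymp \log d$ and applying Markov yields $\|\tilde\scrE\| \lesssim \sigma\sqrt{d\log d} + T(\log d)^{3/2}$, which after substitution of $T$ gives the stated bound on $\|\scrE\|$ with the claimed probability.

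\textbf{Lower bound.} Write $\beta := \max\{1/2, (p-1)/\alpha + 1/4\}$. The $\sqrt{d}$ piece follows from a column argument: for any fixed $(i_2,\ldots,i_p)$, taking $a_k = e_{i_k}$ for $k\ge 2$ and $a_1$ aligned with the column $\scrE(\cdot,i_2,\ldots,i_p)$ yields $\|\scrE\|\ge\|\scrE(\cdot,i_2,\ldots,i_p)\|_2$, and the law of large numbers (using $\EE E^2 = \sigma^2 < \infty$) produces $\|\scrE(\cdot,i_2,\ldots,i_p)\|_2 \ge c\sigma\sqrt{d}$ almost surely. For the $d^{(p-1)/\alpha + 1/4}$ piece, which dominates only when $\alpha < 4(p-1)$, I split according to whether $\alpha\le 4$ or $4 < \alpha < 4(p-1)$. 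When $\alpha\le 4$, one has $d^{p/\alpha} \ge d^{(p-1)/\alpha + 1/4}$, so the trivial bound $\|\scrE\|\ge \max_\bi|E_\bi|$ suffices: since $\EE|E|^\alpha = \infty$, $\sum_n \PP(|E|>n^{1/\alpha}) = \infty$, and a Borel--Cantelli argument gives $\max_\bi|E_\bi|\ge c\sigma d^{p/\alpha}$ almost surely.

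\textbf{The subtle regime $4 < \alpha < 4(p-1)$ is the main obstacle.} Here neither $\sqrt{d}$ (column norm) nor $d^{p/\alpha}$ (max-entry) alone reaches $d^{(p-1)/\alpha + 1/4}$, and I expect to need an explicit coupling between heavy entries and bulk noise. The calibration $T^\star := \sigma d^{(p-1)/\alpha}$ is such that $d^p \PP(|E|>T^\star)\asymp d$, i.e., roughly $d$ entries of the tensor are ``heavy''; these scatter across distinct fibers with high probability. Choosing test vectors $a_k$ supported on the heavy-entry coordinates in modes $2,\ldots,p$, while $a_1$ aligns with a bulk vector of length $\asymp\sqrt{d}$ along mode $1$, should give (after optimization) $\|\scrE\|\ge c\sigma d^{(p-1)/\alpha + 1/4}$. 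Making this variational construction rigorous is the main lower-bound obstacle. The matching upper-bound hurdle is the sharp $(\log d)^{3/2}$ factor in the moment bound for bounded random tensors: a naive $\varepsilon$-net over $\SS^{d-1}$ combined with Bernstein gives $T\cdot pd$ instead, and extending Latala's chaining from matrices to tensors while tracking the correct entropy in each mode is what closes that gap.
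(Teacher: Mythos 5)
Your truncation-plus-bounded-moment strategy for the upper bound is a reasonable reorganization of the paper's argument, and the bookkeeping on the bias and tail pieces checks out. But the entire content of the bound lives in your asserted Latala-style inequality $(\EE\|\tilde\scrE\|^q)^{1/q}\le C_p(\sigma\sqrt{dq}+Tq^{3/2})$, which you state without proof and flag as a ``hurdle.'' That hurdle \emph{is} the theorem. The paper's route is slightly different: rather than truncate first, it proves (Theorem~\ref{th:norm_upper}) a moment bound on $\|\scrE\|$ in terms of $\max_{i_l, l\neq k}\sum_{i_k}(E^2-\sigma^2)$ for the untruncated tensor, and then controls that max-of-sums via Khintchine and Rosenthal; truncation of the entries shows up only at the very end, to justify Talagrand's concentration. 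The two decompositions are interchangeable in spirit, but both reduce to the same non-trivial chaining argument over nested nets in each mode (Lata\l a 2005, Nguyen et al.\ 2015), and until that is written out your upper bound is an outline, not a proof. (Also note your asserted inequality cannot hold literally for all $q\ge 1$: already for $T=\sigma=1$, Rademacher tensors of order $p\ge 3$ have $\EE\|\scrE\|\asymp\sqrt{d\log d}$, which exceeds $\sigma\sqrt{d}+T$; you would need to restrict to $q\gtrsim\log d$.)

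The lower bound is where your proposal has a genuine conceptual gap. You single out $4<\alpha<4(p-1)$ as ``the main obstacle'' and propose an elaborate variational coupling of heavy entries with bulk noise. But this regime falls to a much simpler observation that you already use for the $\sqrt d$ piece, if you push it one step further: instead of a \emph{fixed} fiber, take the \emph{maximum} fiber $\ell_2$-norm. Since
\[
\|\scrE\|^2 \;\ge\; \max_{i_2,\dots,i_p}\sum_{i_1=1}^d E_{i_1\cdots i_p}^2,
\]
and the normalized fiber excess $X_{i_2\cdots i_p}:=\tfrac{1}{\sqrt d}\sum_{i_1}(E^2_{i_1\cdots i_p}-\sigma^2)$ has infinite $\alpha/2$-th moment whenever $\EE|E|^\alpha=\infty$ (Khintchine plus Lata\l a's 1997 moment estimate), a Borel--Cantelli argument over the $d^{p-1}$ independent fibers yields $\max X\gtrsim d^{2(p-1)/\alpha}$ a.s.\ along a subsequence, hence $\|\scrE\|^2\ge d + c\,d^{1/2+2(p-1)/\alpha}$, which gives exactly $\|\scrE\|\gtrsim d^{\max\{1/2,\,(p-1)/\alpha+1/4\}}$ in one shot for all $\alpha\ge 4$. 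Your ``heavy entries in modes $2,\dots,p$, bulk in mode $1$'' picture is morally pointing at the right fiber, but you never extract the $d^{p-1}$-fold maximum that supplies the missing $d^{2(p-1)/\alpha}$ factor, so your case split and proposed construction are unnecessary and, as written, unproved.
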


The lower and upper bounds of Theorem \ref{th:moment4} match up to the logarithmic factor. In particular, $\|\mathscr{E}\|$ is of the order $\sqrt{d}$, up to a logarithmic factor, if and only if its entries have finite $4(p-1)$th moment. This can be viewed as a generalization of the classical result for $p=2$ from \cite{silverstein1989weak}. For higher order tensors ($p>2$), the precise order of $\|\mathscr{E}\|$ depends on the value of $\alpha$ for $4\le\alpha<4(p-1)$. Consider, for example, $p=3$. Then $\|\mathscr{E}\|$ is of the same order as that of an iid Gaussian ensemble, up to at most a logarithmic factor, as soon as $E$ has finite eighth moment. Yet, if $E$ only has finite $\alpha$th moment for $4\le\alpha<8$, then $\|\mathscr{E}\|$ depends on the exact value of $\alpha$, and decreases as $\alpha$ increases.

The next result complements Theorem \ref{th:moment4} and deals with the case when $2\le\alpha<4$.
\begin{theorem}
\label{th:moment2} Let $\mathscr{E}\in \RR^{d\times\cdots\times d}$ be a $p$th order random tensor whose entries are independent copies of a random variable $E$ with mean zero and variance $\sigma^2$. There exist constants $C_p, C_p'>0$ depending on $p$ only such that for any $2\le\alpha<4$, $\EE|E|^\alpha<\infty$ implies that, with probability at least $1-d^{-\alpha/2+1}$,
$$
\|\mathscr{E}\|\le C_p\sigma d^{\tfrac{p-1}{\alpha}+\tfrac{1}{2}}(\log d)^{\tfrac{3}{2}}.
$$
Conversely, if $\EE|E|^\alpha=\infty$ then
$$
\|\mathscr{E}\|\ge C'_p\sigma d^{\tfrac{p}{\alpha}},\qquad {\rm almost\ surely}.
$$
\end{theorem}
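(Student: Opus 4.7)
My plan is to argue by truncation. Set $\tau = C_\tau \sigma d^{(p-1)/\alpha+1/2}$ and decompose each entry as $E_{i_1,\dots,i_p}=\widetilde X_{i_1,\dots,i_p}+\mu+Y_{i_1,\dots,i_p}$, where $\mu=\mathbb{E}[E\mathbf{1}\{|E|\le\tau\}]$, $\widetilde X = E\mathbf{1}\{|E|\le\tau\}-\mu$ is the centered truncation, and $Y=E\mathbf{1}\{|E|>\tau\}$. Writing $\widetilde{\mathscr{X}}$ and $\mathscr{Y}$ for the corresponding tensors, the triangle inequality gives
\[
\|\scrE\|\le\|\widetilde{\mathscr{X}}\|+|\mu|\cdot\|\mathbf{1}_d^{\otimes p}\|+\|\mathscr{Y}\|.
\]
By Markov's inequality and a union bound over the $d^p$ entries, $\mathbb{P}(\mathscr{Y}\neq0)\le d^p\mathbb{E}|E|^\alpha/\tau^\alpha\lesssim d^{1-\alpha/2}$, which matches the stated failure probability and kills $\mathscr{Y}$ on the good event. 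The mean-shift term is controlled by $|\mu|\le\mathbb{E}|E|^\alpha/\tau^{\alpha-1}$ together with $\|\mathbf{1}_d^{\otimes p}\|=d^{p/2}$; direct algebra gives a bound of order $\sigma d^{(p-1)/\alpha+1/2-(p+\alpha-2)/2}$, strictly dominated by the target for $p,\alpha\ge 2$.

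\textbf{The main task is to bound $\|\widetilde{\mathscr{X}}\|$.} The entries $\widetilde X$ are iid, centered, have variance at most $\sigma^2$ and support in $[-2\tau,2\tau]$. The plan is to adapt the Latała-style $\epsilon$-net arguments and tensor moment bounds of \citealp{nguyen2015tensor} that underpin Theorem~\ref{th:moment4}, feeding in the boundedness estimate $\mathbb{E}|\widetilde X|^{\alpha'}\le(2\tau)^{\alpha'-2}\sigma^2$ (valid for every $\alpha'\ge 2$) in place of the assumed $\alpha$-th moment and then optimizing over $\alpha'\ge 4$. This should deliver $\|\widetilde{\mathscr{X}}\|\lesssim \sigma d^{(p-1)/\alpha+1/2}(\log d)^{3/2}$, matching the claimed rate.

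\textbf{Lower bound.} Taking each $\ba_k=\pm e_{i_k}$ yields $\|\scrE\|\ge\max_{i_1,\dots,i_p}|E_{i_1,\dots,i_p}|$, so it suffices to show the maximum exceeds $C'_p\sigma d^{p/\alpha}$ almost surely. When $\mathbb{E}|E|^\alpha=\infty$, $\limsup_{t\to\infty}t^\alpha\mathbb{P}(|E|>t)=\infty$, so along a subsequence $t_n\to\infty$ one has $t_n^\alpha\mathbb{P}(|E|>t_n)\to\infty$. Setting $d_n=\lceil(t_n/C)^{\alpha/p}\rceil$ makes $d_n^p\mathbb{P}(|E|>Cd_n^{p/\alpha})\to\infty$, and hence
\[
\mathbb{P}\bigl(\max_{i_1,\dots,i_p}|E_{i_1,\dots,i_p}|<Cd_n^{p/\alpha}\bigr)\le\exp\bigl(-d_n^p\mathbb{P}(|E|>Cd_n^{p/\alpha})\bigr)\to 0
\]
summably fast. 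Borel--Cantelli along a geometric thinning of $\{d_n\}$, combined with monotonicity from embedding $\scrE_d$ as a subtensor of $\scrE_{d'}$ for $d'>d$, upgrades this to the stated almost-sure lower bound.

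\textbf{Main obstacle.} The delicate step is the bound on $\|\widetilde{\mathscr{X}}\|$. A naive $\epsilon$-net plus Bernstein argument loses a polynomial factor in $d$, because the support bound $\tau$ gets multiplied by the net dimension $pd$. Recovering the right exponent requires the tensor-structural refinements of Latała and Nguyen et al., in particular decomposing each copy of the sphere into sparse and spread directions so that the $\ell_\infty$ bound $|(\ba_1)_{i_1}\cdots(\ba_p)_{i_p}|$ is genuinely small on the dominant piece of the net.
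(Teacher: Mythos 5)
Your upper bound takes a genuinely different route from the paper's, and your lower bound has a real error.

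\textbf{Upper bound.} The paper never truncates. It feeds the $\alpha$-th moment of $E$ directly into Theorem~\ref{th:norm_upper}: by Khintchine and Rosenthal, $\EE\bigl|\sum_{i_2}E_{i_1\dots i_p}^2\bigr|^{\alpha/2}\le Cd^{\alpha/2}$, hence $\PP(\max_{i_1,i_3,\dots,i_p}\sum_{i_2}E^2>t)\le Cd^{p-1+\alpha/2}/t^{\alpha/2}$ and $\EE\max\sum E^2\le C_pd^{2(p-1)/\alpha+1}$; plugging this into Theorem~\ref{th:norm_upper} with $q=2$ and then applying Talagrand's concentration on the event $\{\max|E_{i_1\dots i_p}|\le Cd^{(p-1)/\alpha+1/2}\}$ gives the result. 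Your truncation scheme can plausibly be pushed through (one checks, e.g.\ with $\alpha'=4$, that $\Var(\widetilde X^2)\lesssim\tau^{4-\alpha}$ leads to $\EE\max\sum\widetilde X^2\lesssim d^{2(p-1)/\alpha+1}$), but it introduces $\tau$-dependent fourth moments into the net argument and you correctly flag this as the delicate step; the paper's direct argument sidesteps all of that. The mean-shift and tail-term bookkeeping in your decomposition is correct.

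\textbf{Lower bound: this is where the proposal breaks.} Your starting point, that $\EE|E|^\alpha=\infty$ implies $\limsup_{t\to\infty}t^\alpha\PP(|E|>t)=\infty$, is false. Take $\PP(|E|>t)=\min\{1,1/(t^\alpha\log t)\}$ for $t\ge e$: then $\EE|E|^\alpha=\alpha\int t^{\alpha-1}\PP(|E|>t)\,dt\asymp\int dt/(t\log t)=\infty$, yet $t^\alpha\PP(|E|>t)=1/\log t\to 0$. So no subsequence $t_n$ with $t_n^\alpha\PP(|E|>t_n)\to\infty$ need exist, and the rest of your argument (the ``summably fast'' decay of $\exp(-d_n^p\PP(|E|>Cd_n^{p/\alpha}))$, the Borel--Cantelli upgrade) never gets off the ground. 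The ``summably fast'' step is also a non sequitur on its own terms: even if one had $d_n^p\PP(|E|>Cd_n^{p/\alpha})\to\infty$ along a subsequence, the complementary probabilities $\exp(-d_n^p\PP(\dots))$ tend to zero but need not be summable, and in any case what is needed for a Borel--Cantelli II application is divergence of $\sum_k\PP(\text{max over block }k\text{ exceeds threshold})$, not summability of the complements. The paper works only with what $\EE|E|^\alpha=\infty$ genuinely gives: a dyadic decomposition $\EE|E|^\alpha\le C^\alpha+\sum_k C^\alpha 2^{(k+1)p}\PP(C2^{kp/\alpha}\le|E|\le C2^{(k+1)p/\alpha})$ forces $\sum_k 2^{kp}\PP(|E|\ge C2^{kp/\alpha})=\infty$. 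The elementary inequality \eqref{eq:bclemma_ineq} converts this into $\sum_k\bigl[1-(1-a_k)^{b_k}\bigr]=\infty$ with $a_k=\PP(|E|\ge C2^{kp/\alpha})$ and $b_k\asymp 2^{kp}$, and then Borel--Cantelli~II for the independent block events gives the a.s.\ conclusion (for infinitely many dyadic $d$). Your lower bound needs to be rewritten along these lines; the $\limsup$ shortcut does not exist.
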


Note that there is a gap between the upper bound and lower bound in Theorem \ref{th:moment2} beyond the logarithmic factor. While it is plausible that this is the result of our proof technique, it remains a possibility that this may point to something more fundamental. 

\subsection{Convergence Rates for Tensor SVD}
In light of \eqref{eq:daviskahan}, Theorems \ref{th:moment4} and \ref{th:moment2} immediately imply the consistency of $\hat{\bu}_k^{\rm SVD}$s when
\begin{equation}
\label{eq:defcrit}
\lambda\gg \lambda_{\rm crit}(d;\alpha):=\left\{\begin{array}{ll}d^{1/2}(\log d)^{1/2}& {\rm if\ }\alpha>4(p-1)\\ d^{{\tfrac{p-1}{\alpha}+\tfrac{1}{4}}}(\log d)^{3/2}& {\rm if\ }4\le \alpha\le 4(p-1)\\ d^{{\tfrac{p-1}{\alpha}+\tfrac{1}{2}}}(\log d)^{3/2}& {\rm if\ }2\le\alpha<4\end{array}\right..
\end{equation}
In fact, under this condition of the signal-to-noise ratio, much stronger statement can be made and in fact, $\hat{\bu}_k^{\rm SVD}$s can be shown to be rate optimal:
\begin{theorem}\label{pr:svd}
	Let $\mathscr{E}\in \RR^{d\times\cdots\times d}$ be a $p$th order random tensor whose entries are independent random variable with mean zero, variance one, and finite $\alpha$th moment, e.g., $\EE|E|^\alpha<\infty$ for some $\alpha\ge 2$. Then there exist a numerical constant $C>0$ and another constant $C_p$ depending on $p$ only such that if $\lambda \ge C\lambda_{\rm crit}(d;\alpha)$, then
	$$
	\max_{1\le k\le p}\sin\angle(\hat{\bu}_k^{\rm SVD}, \bu_k)\le C_p{\sqrt{d}\over \lambda},
	$$
	with probability tending to one as $d$ increases.
\end{theorem}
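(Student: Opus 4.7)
The strategy is a two-step bootstrap. The Davis-Kahan-type bound \eqref{eq:daviskahan} combined with Theorems~\ref{th:moment4}--\ref{th:moment2} already yields consistency at rate $\|\scrE\|/\lambda \lesssim \lambda_{\rm crit}/\lambda$, but the target rate $\sqrt{d}/\lambda$ is sharper (since $\lambda_{\rm crit}$ carries extra $d^{(p-1)/\alpha}$ and logarithmic factors whenever $\alpha<4(p-1)$). To close this gap, I will exploit the first-order optimality condition of the tensor SVD to reveal that, along $\bu_k^\perp$, the ``effective'' noise acting on $\hat\bu_k^{\rm SVD}$ is only matrix-level rather than tensor-level.

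The first step uses Theorems~\ref{th:moment4} and \ref{th:moment2} to obtain $\|\scrE\| \le C_p \lambda_{\rm crit}(d;\alpha)$ with high probability, so that \eqref{eq:daviskahan} gives $\max_k \sin\angle(\hat\bu_k^{\rm SVD},\bu_k) \le 1/2$ provided the numerical constant $C$ in the hypothesis is chosen large enough. For the second step, Lagrangian stationarity for \eqref{eq:defsvd} yields
$$\scrX(\hat\bu_1^{\rm SVD},\ldots,\cdot_k,\ldots,\hat\bu_p^{\rm SVD}) = \hat\lambda^{\rm SVD}\,\hat\bu_k^{\rm SVD},\qquad \hat\lambda^{\rm SVD} := \langle \scrX, \hat\bu_1^{\rm SVD}\otimes\cdots\otimes\hat\bu_p^{\rm SVD}\rangle.$$
Substituting the spiked model and projecting onto $\bu_k^\perp$ gives
$$\hat\lambda^{\rm SVD}\sin\angle(\hat\bu_k^{\rm SVD},\bu_k) \;\le\; \|\scrE(\hat\bu_1^{\rm SVD},\ldots,\cdot_k,\ldots,\hat\bu_p^{\rm SVD})\|.$$

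Writing $\hat\bu_j^{\rm SVD} = \cos\theta_j\,\bu_j + \sin\theta_j\,\bw_j$ with $\bw_j \perp \bu_j$ of unit length and expanding multilinearly, the right-hand side splits into a leading term $\prod_{j\ne k}\cos\theta_j \cdot \bg_k$ with $\bg_k := \scrE(\bu_1,\ldots,\cdot_k,\ldots,\bu_p)$, plus $2^{p-1}-1$ cross terms indexed by nonempty $J \subset [p]\setminus\{k\}$, each with operator-norm at most $\|\scrE\|$ and an extra factor $\prod_{j\in J}\sin\theta_j$. Crucially, the coordinates of $\bg_k$ are \emph{independent}: different coordinates involve disjoint entries of $\scrE$ (the $k$th index differs), each has mean zero and variance one, so the weak law gives $\|\bg_k\|^2 \le 2d$ with probability tending to one under only the finite-variance hypothesis. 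Union-bounding over the finitely many $k$ and combining with $\hat\lambda^{\rm SVD} \ge \scrX(\bu_1,\ldots,\bu_p) \ge \lambda - \|\scrE\| \ge \lambda/2$ produces
$$\max_k \sin\theta_k \;\le\; \frac{2\sqrt{2d}}{\lambda} + \frac{2^p\|\scrE\|}{\lambda}\max_k\sin\theta_k,$$
and absorbing the second term (possible once $\|\scrE\|/\lambda \le 2^{-(p+1)}$, which is ensured by enlarging $C$) yields the claimed bound $C_p\sqrt{d}/\lambda$.

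The main obstacle, and the conceptual heart of the argument, is the upgrade in the last step from the crude $\lambda_{\rm crit}$ rate to the sharp $\sqrt{d}$ rate: the key insight is recognizing that, unlike $\scrE$ itself, the vector $\bg_k$ has independent coordinates and therefore concentrates at the matrix-level scale $\sqrt{d}$ under no more than a second-moment assumption. Once this is in place, what remains is routine bookkeeping for the multilinear expansion and the fixed-point manipulation.
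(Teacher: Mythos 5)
Your argument is correct, and while it lands in the same place as the paper's --- both hinge on the first-order stationarity condition $\scrX\times_{j\neq k}\hat\bu_j^{\rm SVD}=\hat\lambda\hat\bu_k^{\rm SVD}$ and on the observation that the ``aligned'' contraction $\bg_k=\scrE\times_{j\neq k}\bu_j$ has independent coordinates and hence concentrates at scale $\sqrt{d}$ --- the intermediate machinery is genuinely different. The paper first proves the preliminary estimate $\rho^2\le\sqrt{d}/\lambda$ by pinching $\hat\lambda$ between a lower bound $\hat\lambda\ge\lambda-\sqrt{d}$ and a careful upper bound obtained by expanding $\hat\lambda$ multilinearly in the $\rho_j$ and invoking the graded norm bounds of Theorems~\ref{th:moment4}--\ref{th:moment2} on each partial contraction $\scrE\times_{j\in A}\bu_j$; only then does it feed this bound into the first-order condition, again with term-by-term norm estimates. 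You bypass that entire preliminary step: you bound every cross term crudely by $\|\scrE\|\cdot\prod_{j\in J}\sin\theta_j$ and instead close the loop with a fixed-point absorption, exploiting only that $\|\scrE\|/\lambda$ can be made a small constant by taking $C$ large. This is shorter, does not need the refined graded bounds on lower-order contractions inside the first-order step, and isolates more transparently why $\sqrt{d}/\lambda$ is the right rate: the only noise that cannot be absorbed into the fixed point is $\bg_k$, which concentrates at $\sqrt{d}$ under a second-moment assumption alone. A few small remarks: your Davis--Kahan warm-up giving $\sin\theta\le 1/2$ is in fact never used --- the absorption step already forces $\|\scrE\|/\lambda\le 2^{-(p+1)}$, which is stronger --- so it can be dropped. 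And your $C$ necessarily depends on $p$ (through the $2^p$ in the absorption), as does the paper's, so neither argument literally produces a ``numerical'' constant in the sense the theorem states; this is a cosmetic imprecision in the theorem's phrasing, not a flaw in your proof.
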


For comparison, under Gaussian noise, $\hat{\bu}_k^{\rm SVD}$ converges to $\bu_k$ at the optimal rate of $\sqrt{d}/\lambda$ as soon as $\lambda >C\sqrt{d}$ for some constant $C>0$. Theorem \ref{pr:svd} shows that the same is true, up to a logarithmic factor, when the entries of $\mathscr{E}$ has finite $4(p-1)$th moment. However, when $\alpha<4(p-1)$, the rate $\sqrt{d}/\lambda$ can only be achieved when $\lambda$ is much larger than that required with Gaussian errors. Nonetheless the following result shows that when $\alpha>4$ these requirements are indeed optimal, up to a logarithmic factor, and therefore highlight a fundamental difference in behavior of tensor SVD with heavy tailed and Gaussian noise. 

\begin{theorem}
\label{th:lowersvd}
Let $\mathscr{E}\in \RR^{d\times\cdots\times d}$ be a $p$th order random tensor whose entries are independent copies of a random variable $E$ such that $\EE|E|^\alpha=\infty$ for some $4<\alpha<4(p-1)$ yet $\EE|E|^\beta<\infty$ for some $\beta>\max\{(p-2)\alpha/(p-1),4\}$. If $\lambda < C \max\{d^{\tfrac{(p-1)}{\alpha}+\tfrac{1}{4}},\,\sqrt{d}\}$ for any constant $C>0$, then for any constant $0<C_0<1$,
$$
\min_{1\le k\le p}\sin\angle(\hat{\bu}_k^{\rm SVD}, \bu_k)\ge C_0,
$$
with probability tending to one, as $d\to\infty$. Similarly, suppose that $\EE|E|^\alpha=\infty$ for some $2<\alpha<4$ and $\EE|E|^\beta<\infty$ for some $\beta>\max\{(2p-4)\alpha/(2p-\alpha),2\}$. If $\lambda < C d^{{p}/{\alpha}}$ for any constant $C>0$, then for any constant $0<C_0<1,$
$$
\min_{1\le k\le p}\sin\angle(\hat{\bu}_k^{\rm SVD}, \bu_k)\ge C_0
$$
with probability tending to one, as $d\to\infty$.
\end{theorem}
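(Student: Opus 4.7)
The plan is to exhibit a ``bad witness'' configuration $(\tilde\ba_1,\ldots,\tilde\ba_p)\in(\SS^{d-1})^p$ at which the tensor SVD objective $\langle\scrX,\tilde\ba_1\otimes\cdots\otimes\tilde\ba_p\rangle$ is so large that the maximizers $(\hat\bu_k^{\rm SVD})$ cannot simultaneously lie close to the fixed singular vectors $(\bu_k)$. For the first regime $4<\alpha<4(p-1)$, the hypothesis $\EE|E|^\alpha=\infty$ together with Theorem~\ref{th:moment4} gives $\|\scrE\|\ge C'_p\,d^{\max\{1/2,\,(p-1)/\alpha+1/4\}}$ almost surely, and we take $(\tilde\ba_k)$ to be (approximate) maximizers realizing this lower bound. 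For the second regime $2<\alpha<4$, take $\tilde\ba_k=\sgn(\scrE_{i_1^*,\ldots,i_p^*})\,\be_{i_k^*}$, where $(i_1^*,\ldots,i_p^*)\in\argmax_{i_1,\ldots,i_p}|\scrE_{i_1,\ldots,i_p}|$; by the proof of Theorem~\ref{th:moment2}, this gives $\langle\scrE,\tilde\ba_1\otimes\cdots\otimes\tilde\ba_p\rangle\gtrsim d^{p/\alpha}$ almost surely.

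The second ingredient is an alignment lemma asserting $\max_k|\langle\bu_k,\tilde\ba_k\rangle|\to 0$ in probability, so that the witness is effectively orthogonal to the signal. For case 2 this is transparent: the extremal location $(i_1^*,\ldots,i_p^*)$ of $\scrE$ is essentially uniform on $\{1,\ldots,d\}^p$ and independent of the fixed $\bu_k$, and for any uniform index $I\in\{1,\ldots,d\}$ a Markov-type bound gives $\PP(|\bu_k(I)|>\eta)\le 1/(\eta^2 d)$ since $\|\bu_k\|_2^2=1$. For case 1 the same idea applies: the witness achieving the lower bound of Theorem~\ref{th:moment4} is built from a small random subset of the heavy-tail entries of $\scrE$ whose locations are essentially independent of $(\bu_k)$. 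Substituting the witness into $\scrX$ then yields
\[
\hat\lambda^{\rm SVD}\ge\langle\scrX,\tilde\ba_1\otimes\cdots\otimes\tilde\ba_p\rangle\ge(1-o_p(1))M-o_p(1)\cdot\lambda,
\]
where $M$ equals $d^{p/\alpha}$ in case 2 and $\max\{\sqrt d,\,d^{(p-1)/\alpha+1/4}\}$ in case 1. Since $\lambda\ll M$ by hypothesis, $\hat\lambda^{\rm SVD}\gtrsim M$ with probability tending to one.

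For the contradiction, suppose $\min_k\sin\angle(\hat\bu_k^{\rm SVD},\bu_k)<C_0$, so that every $\hat\bu_k^{\rm SVD}$ lies in the neighborhood $N_k:=\{\ba\in\SS^{d-1}:|\langle\ba,\bu_k\rangle|\ge\sqrt{1-C_0^2}\}$. Then
\[
\hat\lambda^{\rm SVD}=\langle\scrX,\hat\bu_1^{\rm SVD}\otimes\cdots\otimes\hat\bu_p^{\rm SVD}\rangle\le\lambda+\sup_{\ba_k\in N_k}\langle\scrE,\ba_1\otimes\cdots\otimes\ba_p\rangle,
\]
and the central task is to show the local supremum is $o(M)$. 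Here the auxiliary moment hypothesis $\EE|E|^\beta<\infty$ enters through truncation: write $\scrE=\scrE'+\scrE''$ where $\scrE'$ records the entries of magnitude at most $d^{p/\beta}$ and $\scrE''$ holds at most $\tilde O(d^{p(1-\beta/\alpha)})$ spike entries. Theorem~\ref{th:moment4} applied with parameter $\beta$ controls $\|\scrE'\|$ by $d^{(p-1)/\beta+1/4}\,\mathrm{polylog}(d)$, while each surviving spike contributes at most $|\bu_k(i_k^{\rm spike})|\cdot|\scrE_{i^{\rm spike}}|$ in $N_k$, which is $o(M)$ by applying the alignment argument to each spike location (the union bound over polynomially many spikes is benign). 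The stated thresholds $\beta>(p-2)\alpha/(p-1)$ and $\beta>(2p-4)\alpha/(2p-\alpha)$ are precisely what is needed to make both pieces $o(M)$.

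The main obstacle is this local spectral norm estimate; the global bounds of Theorems~\ref{th:moment4} and \ref{th:moment2} are not sufficient, and it is the interplay between the truncation scale $d^{p/\beta}$, the bulk spectral norm $d^{(p-1)/\beta+1/4}$ and the spike count $d^{p(1-\beta/\alpha)}$ that forces the moment thresholds on $\beta$. Once the local bound is in hand, the ``min'' form of the conclusion (every $\hat\bu_k^{\rm SVD}$, not just one, is far from $\bu_k$) follows: if some $k$ had $|\langle\hat\bu_k^{\rm SVD},\bu_k\rangle|$ close to one, then fixing the other modes at their SVD values and using the first-order optimality $\hat\bu_k^{\rm SVD}\propto\scrX\times_{j\ne k}\hat\bu_j^{\rm SVD}$ shows the remaining direction is dominated by a noise vector of size $\gg\lambda$, forcing $\hat\bu_k^{\rm SVD}$ itself to be far from $\bu_k$ and restoring the desired conclusion.
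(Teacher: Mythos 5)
Your high-level architecture matches the paper's: lower-bound the global maximum of the objective by $\|\scrE\|$ (minus a small signal correction), then upper-bound $\sup\langle\scrE,\bx_1\otimes\cdots\otimes\bx_p\rangle$ over the neighborhood where every $\sin\angle(\bx_k,\bu_k)<C_0$, and conclude the maximizer must escape the neighborhood. The divergence is in how you establish the local noise bound. The paper expands each $\bx_j=\sqrt{1-\rho_j^2}\,\bu_j+\rho_j\bv_j$ and uses multilinearity to write $\scrE\times_1\bx_1\cdots\times_p\bx_p$ as a sum over $A\subset[p]$; the all-$\bv$ term picks up at most $C_0^p\|\scrE\|$, and every term with at least one $\bu_j$-contraction is the norm of a lower-order tensor with finite $\beta$th moment, which is controlled by Theorem~\ref{th:moment4} and is $O(\sqrt d+d^{(p-2)/\beta+1/4}\,\mathrm{polylog}\,d)=o(\|\scrE\|)$ precisely when $\beta>(p-2)\alpha/(p-1)$. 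You instead try to split the \emph{tensor} as $\scrE'+\scrE''$ by thresholding entry magnitudes and control the two pieces separately. This is a genuinely different decomposition, but as sketched it does not carry through. First, your spike-count estimate $d^{p(1-\beta/\alpha)}$ has the exponent inverted (the Markov bound from $\EE|E|^\beta<\infty$ gives $\PP(|E|>t)\lesssim t^{-\beta}$; the condition $\EE|E|^\alpha=\infty$ only gives a lower bound on tails and cannot control the number of spikes). Second, the alignment observation $\PP(|u_k(I)|>\eta)\le 1/(\eta^2 d)$ bounds the coordinate of a \emph{fixed} vector $\bu_k$ at a random index, but you need to control $\sup_{\bx_k\in N_k}|x_{k,i_k^{\rm spike}}|$, and vectors in $N_k$ are free to place mass $\rho\le C_0$ anywhere, including on the spike coordinate; this term is therefore comparable to $C_0^p\|\scrE\|$, which is exactly what the paper's all-$\bv$ term captures without any truncation machinery. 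Third, for $4<\alpha<4(p-1)$ the operator norm is driven by a heavy \emph{fiber} $\sum_{i_k}E_{i_1\dots i_p}^2$, not by a single heavy entry, so an entrywise truncation/spike picture is the wrong primitive; your case-1 witness ``small random subset of heavy entries'' does not track the actual extremal structure. The paper's vector-level expansion sidesteps all three difficulties cleanly.

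Separately, note a logical slip in your contradiction setup: supposing $\min_k\sin\angle(\hat\bu_k^{\rm SVD},\bu_k)<C_0$ only places one $\hat\bu_k^{\rm SVD}$ in its neighborhood $N_k$, not all of them, so the local-supremum bound over $\prod_k N_k$ is not directly applicable; what you prove by this route is the $\max$ version. You recognize this and sketch a bootstrap via the first-order condition $\scrX\times_{j\ne k}\hat\bu_j^{\rm SVD}=\hat\lambda\hat\bu_k^{\rm SVD}$; this is a reasonable idea, and in fact the paper's own argument also literally only concludes that the maximizer lies outside $\calS(C_0)$, which yields a $\max$ statement rather than the $\min$ in the theorem. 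So the bootstrap you gesture at is a step that would be needed in either writeup; you should be aware it is genuinely additional work (one must show that a small signal contracted against the misaligned modes is dominated by noise pointing away from $\bu_k$), not a remark.
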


For concreteness, consider a continuous distribution symmetric about 0 whose survival function is given by
$$
\bar{F}(x):=1-F(x)=x^{-\alpha} L(x),\qquad {x>0}
$$
where $L(x)$ is slowly varying function at $+\infty$ in that $L(x)>0$ and
$$
\lim_{x\to\infty} {L(tx)\over L(x)}=1,\qquad t>0.
$$
For such distributions, $\alpha$ is often referred to as their tail index. It is clear that for $E\sim F$, $\EE(|E|^q)=\infty$ if and only if $q\ge \alpha$ . In light of Theorem \ref{th:lowersvd}, when $\alpha>4$, $\hat{\bu}_k^{\rm SVD}$ is inconsistent if $\lambda\lesssim \max\{d^{\tfrac{(p-1)}{\alpha}+\tfrac{1}{4}},\,\sqrt{d}\}$; when $2<\alpha<4$, $\hat{\bu}_k^{\rm SVD}$ is inconsistent if $\lambda\lesssim d^{{p}/{\alpha}}$. Conversely as a result of Theorem \ref{pr:svd}, $\hat{\bu}_k^{\rm SVD}$ converges to $\bu_k$ at the optimal rate if $\lambda\gtrsim \lambda_{\rm crit}(d;\alpha-\epsilon)$ for any $\epsilon>0$.





Interestingly, perhaps also surprisingly at the first sight, the inferior signal strength requirement for estimating the singular vectors under heavy-tailed noise is only a limitation of the tensor SVD and not a fundamental barrier in general. We now show that it is possible to improve the tensor SVD via a different estimation strategy at least when the signal-to-noise ratio is sufficiently high.

\section{Power Iteration with Spectral Initiation}\label{sec:mat_svd}
One of the chief challenges with the tensor SVD is the computational cost. It is well known that computing the best rank-one approximation
\eqref{eq:defsvd} is NP hard \citep[e.g.,][]{hackbusch2012tensor,hillar2013} so that it is infeasible to compute $\hat{\bu}^{\rm SVD}_k$s for large $d$. A common strategy to overcome this difficulty is to apply power iteration with spectral initialization, which has been shown to yield an estimator that is both polynomial time computable and rate optimal in the presence of Gaussian error. See, e.g., \cite{richard2014statistical, liu2017characterizing}. We shall now show that this strategy continues to work whenever $\alpha> 4$.

Recall that the first order condition yields that $\hat{\bu}^{\rm SVD}_k$s satisfies
$$
\mathscr{X}\times_{j\neq k}\hat{\bu}^{\rm SVD}_j\propto \hat{\bu}^{\rm SVD}_k,\qquad 1\le k\le p.
$$
Motivated by this property, we shall consider estimating $\bu_k$ through power iteration:
\begin{equation}
\label{eq:iter}
\bx_k^{[t+1]}={\mathscr{X}\times_{j\neq k}\bx_j^{[t]}\over \left\|\mathscr{X}\times_{j\neq k}\bx_j^{[t]}\right\|},
\end{equation}
with initial estimates $\bx_j^{[0]}$s. For this to work, we first need to be able to find a ``reasonably good'' initial estimate $\bx_j^{[0]}$ that can be efficiently computed. This is usually done through HOSVD.

More specifically, denote by ${\sf Mat}_k: \RR^{d\times\cdots\times d}\to \RR^{d\times d^{p-1}}$ the operator that collapses all indices other than the $k$th one of a $p$th order tensor and therefore converts it into a $d\times d^{p-1}$ matrix. Write
$$
\scrT=\lambda \bu_1\otimes\cdots\otimes\bu_p.
$$
It is not hard to see that
$$
{\sf Mat}_k(\scrT)=\lambda\bu_k(\bu_1\odot\cdots\odot\bu_{k-1}\odot\bu_{k+1}\odot\cdots\odot\bu_p)^\top,
$$
where $\odot$ stands for the Kronecker product so that we can estimate $\bu_k$ by the leading left singular vector, denoted by $\hat{\bu}_k^{\rm Mat}$, of ${\sf Mat}_k(\mathscr{X})$. Observe that
$$
\EE\left[{\sf Mat}_k(\mathscr{X}){\sf Mat}_k(\mathscr{X})^\top\right]=\lambda^2 \bu_k\otimes\bu_k+d^{p-1}I,
$$
and $\hat{\bu}_k^{\rm Mat}$ is the leading eigenvectors of ${\sf Mat}_k(\mathscr{X}){\sf Mat}_k(\mathscr{X})^\top-d^{p-1}I$. By Davis-Kahan Theorem, we have
\begin{eqnarray*}
\sin\angle(\bu_k,\hat{\bu}_k^{\rm Mat})&\le& {2\|{\sf Mat}_k(\mathscr{E}){\sf Mat}_k(\mathscr{E})^\top+{\sf Mat}_k(\mathscr{T}){\sf Mat}_k(\mathscr{E})^\top+{\sf Mat}_k(\mathscr{E}){\sf Mat}_k(\mathscr{T})^\top-d^{p-1}I\|\over \lambda^2}\\
&\le&{2\|{\sf Mat}_k(\mathscr{E}){\sf Mat}_k(\mathscr{E})^\top-d^{p-1}I\|+4\|{\sf Mat}_k(\mathscr{T}){\sf Mat}_k(\mathscr{E})^\top\|\over \lambda^2}.
\end{eqnarray*}
Following Bai-Yin's law, we then have
\begin{proposition}\label{pr:matricize4th}
Let $\mathscr{E}\in \RR^{d\times\cdots\times d}$ be a $p$th order random tensor whose entries are independent copies of a random variable $E$ with mean zero, variance one and $\EE|E|^\alpha<\infty$ for some $\alpha\ge 4$. Then
$$\max_{1\le k\le p}\sin\angle (\hat{\bu}_k^{\rm Mat},\bu_k)=O_p\left(\dfrac{d^{p/2}+\lambda \sqrt{d}}{\lambda^2}\right),\qquad {\rm as\ }d\to\infty.$$
\end{proposition}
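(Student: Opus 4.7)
The target bound is already set up by the Davis--Kahan inequality displayed just before the statement:
$$
\sin\angle(\bu_k,\hat{\bu}_k^{\rm Mat})\le\frac{2\bigl\|{\sf Mat}_k(\scrE){\sf Mat}_k(\scrE)^\top - d^{p-1}I\bigr\|+4\bigl\|{\sf Mat}_k(\scrT){\sf Mat}_k(\scrE)^\top\bigr\|}{\lambda^2}.
$$
The plan is simply to bound each of the two norms in the numerator and to take a union bound over $k\in\{1,\dots,p\}$, which is harmless since $p$ is fixed.

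For the cross term, I would use the rank-one factorization ${\sf Mat}_k(\scrT)=\lambda\,\bu_k\bigl(\bu_1\odot\cdots\odot\bu_{k-1}\odot\bu_{k+1}\odot\cdots\odot\bu_p\bigr)^\top$. Since $\bu_k$ is a unit vector and the Kronecker product of unit vectors is a unit vector, $\bigl\|{\sf Mat}_k(\scrT){\sf Mat}_k(\scrE)^\top\bigr\| = \lambda\,\bigl\|{\sf Mat}_k(\scrE)(\bu_1\odot\cdots\odot\bu_{k-1}\odot\bu_{k+1}\odot\cdots\odot\bu_p)\bigr\|$. The inner $d$-vector is exactly the contraction $\scrE\times_{j\neq k}\bu_j$, whose entries are independent linear combinations of the iid entries of $\scrE$ with coefficient-vector of unit $\ell_2$-norm, hence have mean zero, variance one, and finite $\alpha$-th moment with $\alpha\ge 4$. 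By Chebyshev (or more cleanly by Rosenthal's inequality), the squared $\ell_2$-norm of such a vector is $O_p(d)$, yielding $\bigl\|{\sf Mat}_k(\scrT){\sf Mat}_k(\scrE)^\top\bigr\|=O_p(\lambda\sqrt{d})$.

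For the centered Gram term, ${\sf Mat}_k(\scrE)$ is a $d\times d^{p-1}$ matrix with iid entries of mean zero, variance one, and finite fourth moment. The Bai--Yin law, whose assumptions are met under exactly this moment condition, gives $\sigma_{\max}({\sf Mat}_k(\scrE))\le\sqrt{d^{p-1}}+\sqrt{d}+o(\sqrt{d^{p-1}})$ and $\sigma_{\min}({\sf Mat}_k(\scrE))\ge \sqrt{d^{p-1}}-\sqrt{d}-o(\sqrt{d^{p-1}})$ almost surely. Consequently
$$
\bigl\|{\sf Mat}_k(\scrE){\sf Mat}_k(\scrE)^\top-d^{p-1}I\bigr\|=\max_i\bigl|\sigma_i^2-d^{p-1}\bigr|\le 2\sqrt{d^{p-1}\cdot d}+d+o(d^{p-1})=O_p(d^{p/2})
$$
for every $p\ge 2$. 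Substituting the two bounds into the Davis--Kahan inequality yields
$$
\sin\angle(\bu_k,\hat{\bu}_k^{\rm Mat})=O_p\!\left(\frac{d^{p/2}+\lambda\sqrt{d}}{\lambda^2}\right),
$$
uniformly in $k$, which is the claim.

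The main pressure point is invoking Bai--Yin for a very rectangular matrix of aspect ratio $d/d^{p-1}=d^{2-p}\to 0$ under only a finite fourth moment. The original Bai--Yin theorem does cover this regime, but if one prefers a self-contained finite-$d$ deviation inequality rather than an a.s.\ limit, an alternative is to apply the moment bound for random matrices (e.g.\ an entrywise-truncation plus matrix Rosenthal argument) to obtain the same rate; this gives a cleaner probabilistic statement at essentially no cost. Everything else is bookkeeping.
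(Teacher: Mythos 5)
Your proof follows essentially the same route as the paper: use the displayed Davis--Kahan bound, control the cross term via the rank-one factorization and the observation that $\scrE\times_{j\neq k}\bu_j$ is a $d$-vector of independent mean-zero, variance-one entries (so its norm is $O_p(\sqrt d)$ by Chebyshev/Rosenthal), control the centered Gram term via Bai--Yin, and take a union over the $p$ modes. The one substantive slip is in the stated Bai--Yin error term. You write $\sigma_{\max}\le\sqrt{d^{p-1}}+\sqrt d+o(\sqrt{d^{p-1}})$, but then the squared cross term is $2\sqrt{d^{p-1}}\cdot o(\sqrt{d^{p-1}})=o(d^{p-1})$, which is \emph{not} $O(d^{p/2})$ when $p\ge 3$, so the line ``$2\sqrt{d^{p-1}\cdot d}+d+o(d^{p-1})=O_p(d^{p/2})$'' does not actually follow from what you stated. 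The correct formulation of Bai--Yin (e.g.\ Theorem~5.31 of Vershynin, which the paper cites) has error $o(\sqrt d)$, measured against the \emph{smaller} dimension; after squaring, the remainder is $2\sqrt{d^{p-1}}\cdot o(\sqrt d)+O(d)+o(d)=o(d^{p/2})+O(d)$, which is $O(d^{p/2})$ for every $p\ge 2$, and the claim goes through. Your closing concern about the very rectangular aspect ratio $d/d^{p-1}\to 0$ under only a fourth moment is exactly what this version resolves (the limiting aspect ratio $y=0$ is allowed, and the finite fourth moment is what $\alpha\ge 4$ provides), so there is no need for the alternative matrix-Rosenthal route you sketch as a fallback.
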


Proposition \ref{pr:matricize4th} indicates that $\hat{\bu}_k^{\rm Mat}$s are consistent as soon as $\lambda\gg d^{p/4}$. It is worth comparing this requirement with that of $\hat{\bu}_k^{\rm SVD}$s: $\lambda\gg d^{\max\{1/2,1/4+(p-1)/\alpha\}}$. See Theorem \ref{th:lowersvd}. The former is more restrictive since $\alpha\ge 4$. As in the Gaussian noise case, this gap is likely a display of the tradeoff between computational and statistical efficiencies: $\hat{\bu}_k^{\rm Mat}$ is computationally tractable yet $\hat{\bu}_k^{\rm SVD}$ in general is not. On the other hand, the convergence rate for $\hat{\bu}_k^{\rm Mat}$ is inferior to that of $\hat{\bu}_k^{\rm SVD}$. However, we can improve upon $\hat{\bu}_k^{\rm Mat}$s by using $\hat{\bu}_j^{\rm Mat}$s in place of ${\bx}^{[0]}_j$s in \eqref{eq:iter} to get an updated estimate.

To see how this works, write
$$\bx^{[t]}_j=\sqrt{1-\rho_j^2} \bu_j + \rho_j \bv_j$$ 
where $\bv_j$ is a unit length vector perpendicular to $\bu_j$. Then
\begin{eqnarray*}
\scrX\times_{j\neq k} \bx^{[t]}_j&=&\lambda \left(\prod_{j\neq k} \sqrt{1-\rho_j^2}\right) \bu_k\\
&&+\sum_{A\subset ([p]\setminus \{k\})} \left(\prod_{j\in A} \sqrt{1-\rho_j^2}\right)\left(\prod_{j\notin A\cup\{k\}} \rho_j\right)\mathscr{E}\times_{j\in A}\bu_j\times_{j\notin A\cup\{k\}} \bv_j.
\end{eqnarray*}
Note that the second term on the righthand side can be bounded by, up to a constant, $\|\scrE\|$. In light of Proposition \ref{pr:matricize4th}, this implies that, if $\rho_j$ are uniformly bounded away from 1, then
$$
\sin\angle\left(\bx_k^{[t+1]},\bu_k\right)=O_p\left({\|\scrE\|\over \lambda}\right).
$$
In particular, in the case of Gaussian errors, $\|\scrE\|=O_p(\sqrt{d})$ so that we can conclude that
$$
\sin\angle\left(\bx_k^{[1]},\bu_k\right)=O_p\left({\|\scrE\|\over \lambda}\right)
$$
suggesting that a single iteration with $\bx_k^{[0]}=\hat{\bu}_k^{\rm Mat}$ ($k=1,\ldots,p$) leads to rate optimal estimates of $\bu_k$. The same technique can be applied whenever $\alpha>4(p-1)$ thanks to Theorem \ref{th:moment4}. The argument, however, breaks down when $\alpha<4(p-1)$ and a single iteration no longer suffices. Nonetheless, a more careful analysis shows that the performance keeps improving with more iterations and $O(\log d)$ number of iterations can yield a rate optimal of $\bu_k$s.

\begin{proposition}
\label{pr:iter4th}
Let $\mathscr{E}\in \RR^{d\times\cdots\times d}$ be a $p$th order random tensor whose entries are independent copies of a random variable $E$ with mean zero, variance one and $\EE|E|^\alpha<\infty$ for some $\alpha> 4$. There exist constants $C_1,C_2>0$ such that if $\lambda >C_1d^{p/4}$ and  $\rho^{[t]}<1$, then
$$
\rho^{[t+1]}\le C_2(\rho^{[t]})^2{\|\scrE\|\over \lambda}+O_p\left({\sqrt{d}\over \lambda}\right),
$$
where
$$
\rho^{[t]}=\max_{1\le k\le p}\sin\angle(\bx^{[t]}_k,\bu_k).
$$
\end{proposition}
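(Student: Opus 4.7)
The plan is to relate $\sin\angle(\bx_k^{[t+1]},\bu_k)$ to a noise/signal ratio and then sharpen the bound on the noise by splitting it according to the number of $\bv_j$-contractions. Decompose $\bx_j^{[t]}=\sqrt{1-\rho_j^2}\,\bu_j+\rho_j\bv_j$ with $\bv_j\perp\bu_j$ and $\|\bv_j\|=1$, and write
\[
\sin\angle(\bx_k^{[t+1]},\bu_k)=\frac{\|(I-\bu_k\bu_k^\top)(\scrX\times_{j\neq k}\bx_j^{[t]})\|}{\|\scrX\times_{j\neq k}\bx_j^{[t]}\|}.
\]
Using the multilinear decomposition displayed earlier in this section, the signal $\lambda\prod_{j\neq k}\sqrt{1-\rho_j^2}\,\bu_k$ is annihilated by $I-\bu_k\bu_k^\top$, so the numerator equals $\|(I-\bu_k\bu_k^\top)R_k\|\le\|R_k\|$ with $R_k:=\scrE\times_{j\neq k}\bx_j^{[t]}$, while the denominator is at least $\lambda\prod_{j\neq k}\sqrt{1-\rho_j^2}-\|R_k\|$, which is $\gtrsim\lambda$ under the hypothesis $\rho^{[t]}<1$ together with $\|R_k\|\ll\lambda$ (to be verified).

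Split $R_k=\sum_{A\subseteq[p]\setminus\{k\}}c_A\cdot\scrE\times_{j\in A}\bu_j\times_{j\notin A\cup\{k\}}\bv_j$ with $|c_A|\le(\rho^{[t]})^{p-1-|A|}$ according to the size of $A$. When $|A|=p-1$, the vector $\scrE\times_{j\neq k}\bu_j\in\RR^d$ has iid entries of mean zero and unit variance: different free indices $i_k$ address disjoint $\scrE$-entries, while each entry is the same linear combination of iid variables with squared weights summing to one. A Rosenthal-type estimate transfers a bounded $\alpha$-th moment from $\scrE$ to these entries, so Chebyshev yields $\|\scrE\times_{j\neq k}\bu_j\|=O_p(\sqrt d)$. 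When $|A|\le p-3$, at least two $\bv$-contractions appear, so each such term is bounded trivially by $(\rho^{[t]})^{p-1-|A|}\|\scrE\|\le(\rho^{[t]})^2\|\scrE\|$.

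The decisive case is $|A|=p-2$, where a naive bound would produce $\rho^{[t]}\|\scrE\|$ and spoil the claim. Let $j^*$ be the unique index in $[p]\setminus(A\cup\{k\})$ and set $M:=\scrE\times_{j\in A}\bu_j\in\RR^{d\times d}$ indexed by modes $k$ and $j^*$. The key observation is that distinct pairs $(i_k,i_{j^*})$ use disjoint entries of $\scrE$ and all share the same contraction weights $\prod_{j\in A}u_{j,i_j}$, so $M$ is a $d\times d$ matrix with iid entries of mean zero and unit variance. A Rosenthal inequality shows the entries of $M$ inherit a bounded $\alpha$-th moment from $\scrE$; since $\alpha>4$, the Bai--Yin/Silverstein theorem (the same input used in Proposition~\ref{pr:matricize4th}) gives $\|M\|_{\rm op}=O_p(\sqrt d)$. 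Therefore
\[
\|\scrE\times_{j\in A}\bu_j\times_{j^*}\bv_{j^*}\|=\|M\bv_{j^*}\|\le\|M\|_{\rm op}=O_p(\sqrt d),
\]
and the whole $|A|=p-2$ contribution is $\rho^{[t]}\cdot O_p(\sqrt d)$ rather than $\rho^{[t]}\|\scrE\|$.

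Combining the three regimes, $\|R_k\|=O_p(\sqrt d)+\rho^{[t]}\cdot O_p(\sqrt d)+C(\rho^{[t]})^2\|\scrE\|=O_p(\sqrt d)+C(\rho^{[t]})^2\|\scrE\|$, using $\rho^{[t]}<1$ to fold the linear term into the additive $O_p(\sqrt d)$. The denominator is of order $\lambda$ because $\lambda>C_1 d^{p/4}$ forces $\lambda\gg\|\scrE\|$ under $\alpha>4$. Dividing and maximizing over $k$ gives the claimed recurrence. The principal obstacle is the matrix estimate in the $|A|=p-2$ step: once one recognizes $M$ as a $d\times d$ iid matrix with a controlled $\alpha$-th moment, the bound follows from classical random matrix theory, but this iid structure is a non-obvious consequence of the particular way $\scrE$ is contracted, and is precisely what upgrades the single-iteration Davis--Kahan argument into a contractive recurrence.
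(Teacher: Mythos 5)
Your proof follows the paper's overall strategy: expand $\scrX\times_{j\neq k}\bx_j^{[t]}$ into the signal term and a sum over subsets $A$, group the error terms by the number $l=p-1-|A|$ of $\bv$-contractions, bound each group, and then divide by a denominator that is of order $\lambda$. The $l=0$ term (Rosenthal then Chebyshev gives $O_p(\sqrt d)$), the $l\ge 2$ terms (trivial bound by $\|\scrE\|$), and the denominator are all handled as in the paper. Where you genuinely add something is the $l=1$ block, $|A|=p-2$. The paper's displayed estimate
\[
\sum_{|A|\le p-2}\cdots \;\le\; \sum_{l=2}^{p-1}\binom{p}{l}(\rho^{[t]})^l\|\scrE\|\;\le\; C_p(\rho^{[t]})^2\|\scrE\|
\]
starts the sum at $l=2$, but $|A|\le p-2$ includes $|A|=p-2$, i.e.\ $l=1$; if one bounds that term trivially by $\|\scrE\|$ it contributes $\rho^{[t]}\|\scrE\|$, which does not fold into either $(\rho^{[t]})^2\|\scrE\|$ or $O_p(\sqrt d)$. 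Your observation — that $\scrE\times_{j\in A}\bu_j$ with $|A|=p-2$ is a $d\times d$ matrix whose entries are iid (distinct index pairs $(i_k,i_{j^*})$ address disjoint blocks of $\scrE$ with the same weights $\prod_{j\in A}u_{j,i_j}$), have unit variance, and by Rosenthal have a bounded $\alpha$-th moment, so that Bai--Yin gives $\|M\|_{\mathrm{op}}=O_p(\sqrt d)$ and hence $\|M\bv_{j^*}\|\le O_p(\sqrt d)$ — is exactly the estimate required to make the recurrence close; it is the same random-matrix input the paper already uses in Proposition~\ref{pr:matricize4th}. One small imprecision you share with the paper: the hypothesis ``$\rho^{[t]}<1$'' alone does not keep $\prod_{j\neq k}\sqrt{1-\rho_j^2}$ bounded below, so the constants implicitly depend on $\rho^{[t]}$ staying below a fixed $\rho_*<1$, which the power-iteration induction supplies.
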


In light of Propositions \ref{pr:matricize4th} and \ref{pr:iter4th}, we can estimate $\bu_k$ by running power iterations \eqref{eq:iter} with initialization
$$
\bx_k^{[0]}=\hat{\bu}_k^{\rm Mat},\qquad k=1,\ldots,p.
$$
And
$$
\max_{1\le k\le p}\sin\angle (\bx_k^{[T]},\bu_k)=O_p\left(\dfrac{\sqrt{d}}{\lambda}\right),\qquad {\rm as\ }d\to\infty,
$$
for $T\gtrsim \log d$ provided that $\lambda\ge Cd^{p/4}$ for a sufficiently large constant $C>0$. This proves that 

\begin{theorem}
\label{th:main4}
Assume that the entries of $\scrE$ are independent and identically distributed with zero mean, unit variance and finite $\alpha$th moment for some $\alpha>4$. There exist constants $C_1,C_2>0$ such that if $\lambda> C_1d^{p/4}$, then there is a polynomial time computable estimator $\hat{\bu}_k$ ($k=1,\ldots,p$) obeying
$$
\max_{1\le k\le p}\sin\angle(\hat{\bu}_k,\bu_k)\le {C_2\sqrt{d}\over \lambda},
$$
with probability tending to one as $d\to\infty$.
\end{theorem}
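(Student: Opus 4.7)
The plan is to combine Propositions \ref{pr:matricize4th} and \ref{pr:iter4th} in a two-stage procedure: spectral initialization via matricization, followed by $O(\log d)$ rounds of power iteration, and to track the error $\rho^{[t]}=\max_k \sin\angle(\bx_k^{[t]},\bu_k)$ through the resulting recursion. The estimator itself is simply $\hat{\bu}_k=\bx_k^{[T]}$ with $\bx_k^{[0]}=\hat{\bu}_k^{\rm Mat}$ and $T=\lceil c\log d\rceil$ for a suitable constant $c$; both the matricized SVD and each power step are clearly polynomial time in $d$, so the computational claim is immediate.

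First I would use Proposition \ref{pr:matricize4th} to control the initialization. Since $\alpha>4$ and $\lambda\ge C_1 d^{p/4}$, that proposition yields
\[
\rho^{[0]}=O_p\!\left(\frac{d^{p/2}+\lambda\sqrt d}{\lambda^2}\right)=O_p\!\left(\frac{1}{\lambda}\Bigl(\tfrac{d^{p/2}}{\lambda}+\sqrt d\Bigr)\right),
\]
which under $\lambda\ge C_1 d^{p/4}$ is at most $O_p(d^{-1/4}\cdot C_1^{-1}+C_1^{-1})$. By choosing $C_1$ large we can therefore guarantee that, with probability tending to one, $\rho^{[0]}\le \rho_0$ for some constant $\rho_0<1$ of our choosing (in particular small enough to enter the contractive regime of Proposition \ref{pr:iter4th}).

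Next I would iterate Proposition \ref{pr:iter4th}. Since $\alpha>4$, Theorem \ref{th:moment4} gives $\|\scrE\|=O_p\bigl(\sqrt{d\log d}+d^{(p-1)/\alpha+1/4}(\log d)^{3/2}\bigr)$, and because $(p-1)/\alpha+1/4<p/4$ for $\alpha>4$, we have $\|\scrE\|/\lambda=o_p(1)$ whenever $\lambda\ge C_1 d^{p/4}$. Hence the recursion from Proposition \ref{pr:iter4th},
\[
\rho^{[t+1]}\;\le\;C_2(\rho^{[t]})^2\,\frac{\|\scrE\|}{\lambda}\;+\;O_p\!\left(\frac{\sqrt d}{\lambda}\right),
\]
holds at every step (the hypothesis $\rho^{[t]}<1$ is preserved inductively since the right-hand side is bounded by $C_2\rho_0\cdot\|\scrE\|/\lambda+o(1)<1$ once $\rho^{[0]}\le\rho_0$). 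Writing $q_d:=C_2\|\scrE\|/\lambda$ and $r_d:=O_p(\sqrt d/\lambda)$, the contraction $\rho^{[t+1]}\le q_d\,\rho^{[t]}+r_d$ follows (after absorbing one factor of $\rho^{[t]}\le\rho_0<1$ into the constant), and $q_d\to 0$. Iterating,
\[
\rho^{[T]}\;\le\;q_d^T\,\rho^{[0]}\;+\;\frac{r_d}{1-q_d}\;=\;O_p\!\left(\frac{\sqrt d}{\lambda}\right)
\]
as soon as $T\gtrsim \log d$, because the geometric term decays faster than any polynomial in $d$ once $q_d=o(1)$.

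The main technical obstacle is already resolved by Proposition \ref{pr:iter4th}: verifying that one round of power iteration has the claimed quadratic-in-$\rho$ contraction while only picking up an additive $O_p(\sqrt d/\lambda)$ term requires the detailed expansion of $\scrX\times_{j\ne k}\bx_j^{[t]}$ given in the excerpt and a sharp bound on the ``cross'' contractions $\scrE\times_{j\in A}\bu_j\times_{j\notin A\cup\{k\}}\bv_j$. Given those inputs, the remaining argument here is essentially bookkeeping: choose $C_1$ large enough that the spectral initialization starts inside the basin of attraction, use $\alpha>4$ together with Theorem \ref{th:moment4} to ensure $\|\scrE\|/\lambda=o(1)$, and pick $T=\lceil c\log d\rceil$ so that the geometric transient is negligible compared to the fixed-point error $\sqrt d/\lambda$. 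This yields the polynomial-time estimator satisfying $\max_k\sin\angle(\hat{\bu}_k,\bu_k)\le C_2\sqrt d/\lambda$ with probability tending to one, as claimed.
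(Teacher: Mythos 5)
Your proposal is correct and follows exactly the route the paper takes: initialize with $\hat{\bu}_k^{\rm Mat}$ via Proposition \ref{pr:matricize4th}, then apply the contraction of Proposition \ref{pr:iter4th} for $T\gtrsim\log d$ iterations, using Theorem \ref{th:moment4} to guarantee $\|\scrE\|/\lambda$ stays small under $\alpha>4$ and $\lambda\gtrsim d^{p/4}$. The paper states this combination in two sentences and leaves the recursion bookkeeping implicit; your write-up simply spells out those omitted steps.
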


For this strategy to work we need $\lambda> \|\scrE\|$. However, in light of Theorem \ref{th:moment2}, this would require a higher signal-to-noise ratio than $d^{p/4}$ when $\alpha<4$. It turns out that while the vanilla power iteration may not work for smaller $\alpha$s, it is possible to attain both statistical and computational efficiencies as long as $\lambda\gtrsim d^{p/4}$ for any $\alpha\ge 2$.

\section{Tractable Estimation for all $\alpha\ge 2$}\label{sec:rob_svd}
As indicated in Theorem \ref{th:main4}, HOSVD and power iteration yields a consistent estimator under the signal strength requirement $\lambda\gtrsim d^{p/4}$ only if the entries of $\scrE$ have finite fourth moment. This can no longer be successful when $\lambda\sim d^{p/4}$ and $\alpha<4$, even without computational considerations, as shown by Theorem \ref{th:lowersvd}. To resolve this issue, we need to modify both the initialization and the power iteration steps. We first describe a new way for initialization.

\subsection{Initialization by Robust HOSVD}
The rationale behind the spectral initialization presented in the previous section is that ${\sf Mat}_k(\scrX){\sf Mat}_k(\scrX)^\top$ is an unbiased estimate of $\lambda^2\bu_k\otimes\bu_k$. However, this incurs bounding $\|{\sf Mat}_k(\mathscr{E}){\sf Mat}_k(\mathscr{E})^\top-I\|$ which requires finite fourth moment of the entries of $\scrE$. To relax this condition, we shall now proceed to estimate $\lambda^2\bu_k\otimes\bu_k$ via a more robust approach that works as long as the entries of $\scrE$ have finite variance.

In particular, we shall adopt a method first developed by \cite{catoni2012challenging} for estimating univariate mean, and later extended by \cite{minsker2018sub} for estimating matrices. It is based on an M-estimation framework where we estimate the common mean $\bM$ from some independent, but not necessarily identically distributed, samples $\bS_i, i=1,\ldots,n$ by
$$\hat{\bS}=\argmin_\bM\left[\tr \sum_{j=1}^n\Psi(\theta(\bS_j-\bM))\right],$$
and $\theta$ is a tuning parameter to be specified later. Here, for a function $f:\RR\to\RR$ and symmetric matrix $\bM$ with spectral decomposition $\bM=\bU\mathbf{\Lambda}\bU^\top$,
$$f(\bM)=\bU\diag(f(\lambda_1),\dots,f(\lambda_d))\bU^\top.$$
In particular, we shall take a $\Psi$ so that its first derivative $\psi=\Psi'$ is operator Lipschitz and obeys
$$-\log(1-x+x^2/2)\le\psi(x)\le \log(1+x+x^2/2).$$
See \cite{catoni2012challenging} and \cite{minsker2018sub} for further discussions and various examples.

Recall that
$$
{\sf Mat}_k(\scrX){\sf Mat}_k(\scrX)^\top=\sum_{i_{-k}\in[d]^{p-1}} \bX_{i_{-k}}\bX_{i_{-k}}^\top
$$
where $i_{-k}=(i_1,\ldots,i_{k-1},i_{k+1},\ldots,i_p)$ and $\bX_{i_{-k}}$ is the $k$th mode fiber of $\scrX$ with all indices except for the $k$th one fixed. Note that
$$
\EE(\bX_{i_{-k}}\bX_{i_{-k}}^\top)=\lambda^2 w_{i_{-k}}^2\bu_k\otimes\bu_k+I,
$$
where
$$
w_{i_{-k}}=\prod_{l\neq k} u_{li_l}.
$$
It is tempting to apply the aforementioned strategy directly to $\{\bX_{i_{-k}}\bX_{i_{-k}}^\top: i_{-k}\in [d]^{p-1}\}$ to estimate $\lambda^2\bu_k\otimes\bu_k$. There are, however, a couple of obstacles in doing so. Firstly, bounding the variation of $\hat{\bS}$ incurs the second moment of $\bS_j$s which can be translated into a requirement on the fourth moment of $\scrX$. This is exactly what we try to avoid. To this end, we shall instead consider estimating
$$
\bV_k=\lambda^2 \left[\bu_k\otimes\bu_k-{\rm diag}(\bu_k\otimes\bu_k)\right].
$$
Note that
$$
\left\|\bV_k-\lambda^2 \bu_k\otimes\bu_k\right\|=\lambda^2\|\bu_k\|_{\ell_\infty}^2.
$$
By Davis-Kahan Theorem, we know that
$$
\sin\angle(\bv,\bu_k)\le 2\|\bu_k\|_{\ell_\infty}^2.
$$
Therefore, by assuming that $2\|\bu_k\|_{\ell_\infty}^2<\eta$, a ``good'' estimate of the leading eigenvector of $\bV$ may yield an initial value satisfying the requirement of Proposition \ref{pr:iter}.

Another difficulty is that 
$$
\bY_{i_{-k}}=\bX_{i_{-k}}\bX_{i_{-k}}^\top-{\rm diag}(\bX_{i_{-k}}\bX_{i_{-k}}^\top)
$$
have different means. To this end, we randomly partition $[d]^{p-1}$ into $n$ groups, denoted by $I_1,\ldots, I_n$. This sampling is done through $d^{p-1}$ samples of $\mathrm{Multinomial}\left(n;\tfrac{1}{n},\dots,\tfrac{1}{n}\right).$ Let
$$
\hat{\bV}_k={1\over n\theta}\sum_{j=1}^n \psi(\theta\bS_j),
$$
where
\begin{equation}\label{eq:defSj}
	\bS_j=\sum_{i_{-k}\in I_j}\bY_{i_{-k}}.
\end{equation}
$\hat{\bV}_k$ can be viewed as a one-step gradient descent for computing $\hat{\bS}$ with initial value $0$.

Denote by
$$
\mu_1=\max_{i_{-k}\in [d]^{p-1}} |w_{i_{-k}}|, \qquad {\rm and}\qquad \mu_2=\max_{1\le k\le p}\|\bu_k\|_{\ell_\infty}.
$$
And write $\hat{\bv}_k$ the leading eigenvector of $\hat{\bV}_k$. Then

\begin{theorem}\label{th:robust1}
Assume that $\lambda>Cd^{p/4}(\log d)^{1/4}$ and $\mu_1\le C^{-1}(\log d)^{-1}$ for a sufficiently large constant $C>0$. If 
$$\theta=\sqrt{\dfrac{8\log(d)}{\lambda^4/n+d^p}},$$
then
$$
\max_{1\le k\le p}
\sin\angle(\hat{\bv}_k,\bu_k)\le 2\mu_2^2+\sqrt{\dfrac{32\log d}{n}}+\dfrac{(\lambda\sqrt{d}+d^{p/2})\sqrt{8\log d}}{\lambda^2}
$$
with probability at least $1-Cd^{-1}\log d-n\exp(-1/Cn\mu_1^2).$
\end{theorem}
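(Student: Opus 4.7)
The plan is to reduce the claim to a Davis--Kahan perturbation argument applied to $\hat\bV_k$ viewed as an estimator of $\bV_k/n$. The rank-one target $(\lambda^2/n)\bu_k\otimes\bu_k$ differs from $\bV_k/n$ only by the scaled diagonal, so
$$\|\bV_k/n - (\lambda^2/n)\bu_k\otimes\bu_k\| = (\lambda^2/n)\|\mathrm{diag}(\bu_k\otimes\bu_k)\| \le (\lambda^2/n)\mu_2^2,$$
and Davis--Kahan combined with the triangle inequality gives
$$\sin\angle(\hat\bv_k,\bu_k)\le 2\mu_2^2+\frac{2\|\hat\bV_k-\bV_k/n\|}{\lambda^2/n}.$$
The $2\mu_2^2$ precisely accounts for the diagonal-removal bias, so the remaining task is to control $\|\hat\bV_k-\bV_k/n\|$ under only the finite-variance assumption on $\scrE$.

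The technical heart of the argument is a matrix second-moment computation that uses only the variance of $\scrE$. Writing $\bX_{i_{-k}}=(X_1,\ldots,X_d)$, the off-diagonal entries of $\bY_{i_{-k}}^2$ satisfy $(\bY_{i_{-k}}^2)_{ac}=X_aX_c\sum_{b\notin\{a,c\}}X_b^2$ for $a\ne c$, and the on-diagonal entries $(\bY_{i_{-k}}^2)_{aa}=X_a^2\sum_{b\ne a}X_b^2$; in both cases the summands depend on each noise coordinate only through its square, the zero-diagonal of $\bY_{i_{-k}}$ precisely annihilating the $X_b^4$ terms that would otherwise require finite fourth moments. Substituting $\bX_{i_{-k}}=\lambda w_{i_{-k}}\bu_k+\bZ_{i_{-k}}$ with $\EE\bZ_{i_{-k}}\bZ_{i_{-k}}^\top=I$ yields, up to lower-order terms,
$$\EE\bY_{i_{-k}}^2 \preceq (d+\lambda^2 w_{i_{-k}}^2)I+\lambda^2 w_{i_{-k}}^2(d+\lambda^2 w_{i_{-k}}^2)\,\bu_k\otimes\bu_k,$$
and summing over $i_{-k}\in[d]^{p-1}$ with $\sum w_{i_{-k}}^2=1$ and $\sum w_{i_{-k}}^4\le\mu_1^2$ gives $\|\sum_{i_{-k}}\EE\bY_{i_{-k}}^2\|\lesssim d^p+\lambda^2 d$. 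Moreover, the identity $\sum_j\EE[\bS_j\mid\Pi]=\bV_k$ holds pointwise in the partition $\Pi=(I_1,\ldots,I_n)$ (because $\sum_j\sum_{i_{-k}\in I_j}=\sum_{i_{-k}}$), so $\hat\bV_k$ is unbiased for $\bV_k/n$.

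Next I would apply the Catoni--Minsker sub-Gaussian matrix mean bound to the samples $\bS_j$, which are independent given $\Pi$, to obtain with probability at least $1-Cd^{-1}\log d$,
$$\|\hat\bV_k-\bV_k/n\|\lesssim\frac{\theta}{n}\Big\|\sum_j\EE[\bS_j^2\mid\Pi]\Big\|+\frac{\log d}{n\theta}.$$
The conditional second moment decomposes as $\sum_{i_{-k}}\EE\bY_{i_{-k}}^2+\sum_j(\EE[\bS_j\mid\Pi])^2$, where the first piece is bounded by $d^p+\lambda^2 d$ from the preceding step, and the second satisfies $\sum_j(\EE[\bS_j\mid\Pi])^2\le \lambda^4\max_j\sum_{i_{-k}\in I_j}w_{i_{-k}}^2\le 2\lambda^4/n$ once the partition is controlled. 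Substituting the prescribed $\theta=\sqrt{8\log d/(\lambda^4/n+d^p)}$ produces a bound of order $\sqrt{\log d}\,(d^{p/2}+\lambda\sqrt d+\lambda^2/\sqrt n)/n$; dividing by the eigen-gap $\lambda^2/n$ then reproduces exactly the stated $\sqrt{32\log d/n}$ and $(\lambda\sqrt d+d^{p/2})\sqrt{8\log d}/\lambda^2$ terms.

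The required partition-concentration step, which also generates the $n\exp(-1/(Cn\mu_1^2))$ contribution to the failure probability, follows from Bernstein's inequality applied to $\sum_{i_{-k}\in I_j}w_{i_{-k}}^2=\sum_{i_{-k}}w_{i_{-k}}^2\mathbf{1}[i_{-k}\in I_j]$: the indicators are multinomial with parameter $1/n$ and the summands satisfy $w_{i_{-k}}^2\le\mu_1^2$, giving $\Pr[|\sum_{i_{-k}\in I_j}w_{i_{-k}}^2-1/n|\ge 1/n]\le 2\exp(-1/(Cn\mu_1^2))$, from which a union bound over the $n$ groups produces the stated factor. \textbf{The main obstacle} will be verifying the matrix Catoni--Minsker bound in the non-identically-distributed conditional setting (the $\bS_j$'s have distinct conditional means and variances given $\Pi$) while tracking enough of the constants to fit the specific $\theta$ displayed in the theorem; the diagonal-removal moment identity that suppresses any fourth-moment requirement is elementary once spotted but is the conceptual reason the construction succeeds under the minimal assumption $\EE E^2<\infty$.
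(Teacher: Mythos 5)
Your proposal is essentially the paper's own argument: a Davis--Kahan reduction to controlling $\|\hat\bV_k-\bV_k/n\|$ with the $2\mu_2^2$ term accounting for the diagonal-removal bias; the matrix second-moment computation $\|\sum_{i_{-k}}\EE\bY_{i_{-k}}^2\|\lesssim d^p+\lambda^2 d+\lambda^4\mu_1^2$, which is exactly what the paper packages as Lemma~\ref{lem:mat_means}; Minsker's (2018) matrix concentration bound conditional on the partition; and a Chernoff/Bernstein bound to control $\max_j\sum_{i_{-k}\in I_j}w_{i_{-k}}^2\le 2/n$. The two minor presentational differences — you phrase the Minsker bound in its ``untuned'' $\theta/n\cdot\|\sum\EE\bS_j^2\|+\log d/(n\theta)$ form rather than the tuned $\sigma\sqrt{2\log(2d/\delta)/n}$ form, and you invoke Bernstein rather than the scaled Chernoff inequality of Raghavan (1988) — are cosmetic. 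Your explicit remark that the zero diagonal of $\bY_{i_{-k}}$ is what keeps every noise coordinate at degree at most two in $\bY_{i_{-k}}^2$ (so only $\EE E^2$ enters) is exactly the reason the paper works with $\bV_k$ rather than $\lambda^2\bu_k\otimes\bu_k$; it is implicit in Lemma~\ref{lem:mat_means}. One small imprecision: $\hat\bV_k$ is not literally unbiased for $\bV_k/n$ (the influence function $\psi$ is nonlinear), but Minsker's theorem centers the deviation at $\frac{1}{n}\sum_j\EE(\bS_j\mid I)=\bV_k/n$, which is what you actually use, so the argument stands.
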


The algorithm above effectively does a truncation around $\mathbf{0}$. It is natural that this causes significant bias and leads to a larger deviation term. With more gradient iterations, $\hat{\bV}^{(t)}_k$ becomes an increasingly better approximation to $\bV_k$ and reduces the second term of the deviation exponentially fast. We omit details since we intend to use this only for initialization and the performance guarantee given by Theorem \ref{th:robust1} is sufficient for our purpose.

The theoretical choice of the truncation parameter $\theta$ as given above, requires some knowledge of $\lambda$. If  we instead have some preliminary bounds on $\lambda$, we define $\theta_j$ as follows by the so-called Lepski method. Let $\calL=\{l\in\NN: \lambda_{\min}\le \lambda_{l}=2^l\lambda_{\min}\le 2\lambda_{\max}\}.$ For each $\lambda_{l}$ the corresponding truncated estimators $\hat{\bV}_{(l)}$ are defined as above. Then,
$$
\theta=\sqrt{\dfrac{8\log d}{\lambda_l^4/n+d^p}}
$$
and
$$
l^*=\min\left\{l\in\calL:\forall k\in \calL, \,k>l,\,\|\hat{\bV}_{(l)}-\hat{\bV}_{(k)}\|\le \frac{\lambda_k^4/n+d^p}{12n}
\right\}.
$$
Using results from \cite{minsker2018sub}, it can be shown that this scheme provides estimates that differ from Theorem \ref{th:robust1} only by a constant. Notice that in our case we can get a crude upper bound for $\lambda$ using the Frobenius norm of one of the tensor pieces. Moreover, our simulation results show that a fixed upper bound for $\lambda$ often suffices and we do not need to estimate it.

\subsection{One Step Power Iteration with Sample Splitting}
In light of Theorem \ref{th:robust1}, if $\lambda\sim d^{p/4}(\log d)^{1/4}$, then we can ensure that $\sin\angle(\hat{\bv}_k,\bu_k)\le \eta$ for some constant $\eta<1$ by we take $n=C\log d$. We shall now consider using them in the power iteration. As suggested by Proposition \ref{pr:iter4th}, for the accuracy to improve from iteration to iteration, it is important that we have $\lambda\gtrsim \|\scrE\|$. In light of Theorem \ref{th:moment2}, the requirement that $\lambda\sim d^{p/4}$ cannot ensure that is the case when $\alpha<4$. It turns out that this requirement is a mere consequence of the complicated nonlinear relationship between the singular vectors and $\mathscr{E}$ induced by the iterations. If the initial values $\bx^{[0]}_k$s are independent of $\scrX$, then running the power iteration \eqref{eq:iter} once would result in a rate optimal estimate.

\begin{proposition}
\label{pr:iter}
Assume that $\bx_k^{[0]}$s are independent of $\mathscr{X}$ and satisfy
$$
\max_{1\le k\le p}\sin\angle(\bx_k^{[0]}, \bu_k)\le \eta
$$
for some constant $\eta<1$. Then for any $0<\delta<1$,
$$
\max_{1\le k\le p}\sin\angle(\bx_k^{[1]}, \bu_k)\le \max\left\{{C\sqrt{d}/\delta^{1/\alpha}\over\lambda(1-\eta^2)^{(p-1)/2}},1\right\}
$$
with probability at least $1-\delta$.
\end{proposition}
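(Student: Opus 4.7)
The plan is to exploit the sample-splitting assumption that $\bx_j^{[0]}$ is independent of $\scrX$, which reduces the analysis of one power iteration step to bounding a single random vector obtained by contracting $\scrE$ against \emph{deterministic} unit vectors. This is what lets us sidestep the spectral norm $\|\scrE\|$ that forms the bottleneck in Proposition \ref{pr:iter4th}.

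Decompose $\bx_j^{[0]} = \sqrt{1-\rho_j^2}\,\bu_j + \rho_j \bv_j$, with $\rho_j = \sin\angle(\bx_j^{[0]},\bu_j) \le \eta$ and $\bv_j$ a unit vector orthogonal to $\bu_j$. Expanding the $(p-1)$-fold contraction,
\[
\scrX \times_{j\neq k} \bx_j^{[0]} = \lambda c_k\, \bu_k + \bz_k, \qquad c_k := \prod_{j\neq k}\sqrt{1-\rho_j^2}\ge (1-\eta^2)^{(p-1)/2},
\]
where $\bz_k := \scrE \times_{j\neq k} \bx_j^{[0]}$. Writing $\bz_k = (\bz_k^\top \bu_k)\bu_k + \bz_k^\perp$, a direct computation gives
\[
\sin\angle(\bx_k^{[1]},\bu_k) = \frac{\|\bz_k^\perp\|}{\sqrt{(\lambda c_k + \bz_k^\top\bu_k)^2 + \|\bz_k^\perp\|^2}},
\]
which is at most $2\|\bz_k\|/(\lambda c_k)$ whenever $\|\bz_k\| \le \lambda c_k/2$, and bounded trivially by $1$ otherwise. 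Hence the proposition will follow once I show $\|\bz_k\| \le C\sqrt{d}/\delta^{1/\alpha}$ for every $k$ with probability at least $1-\delta$.

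For this, I condition on $(\bx_j^{[0]})_{j=1}^p$ and treat them as fixed unit vectors, which is legitimate by independence of $\bx_j^{[0]}$ from $\scrE$. The $i$th coordinate of $\bz_k$ is $z_{k,i} = \sum_{i_{-k}} a_{i_{-k}}\, \scrE_{(i,i_{-k})}$ with weights $a_{i_{-k}} := \prod_{j\neq k} x^{[0]}_{j,i_j}$; distinct coordinates $z_{k,i},z_{k,i'}$ are independent since they involve disjoint fibers of $\scrE$. Because $\alpha\ge 2$ and each $\bx_j^{[0]}$ is a unit vector, monotonicity of $\ell_p$ norms yields $\sum_{i_{-k}} a_{i_{-k}}^2 = 1$ and $\sum_{i_{-k}} |a_{i_{-k}}|^\alpha \le 1$. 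Rosenthal's inequality applied to the weighted sum defining $z_{k,i}$ then gives $\EE|z_{k,i}|^\alpha \le C_\alpha(1+\EE|E|^\alpha)$. Combining this with the elementary Jensen bound $(\sum_i z_{k,i}^2)^{\alpha/2} \le d^{\alpha/2-1}\sum_i |z_{k,i}|^\alpha$ (valid because $\alpha/2\ge 1$),
\[
\EE\|\bz_k\|^\alpha \le d^{\alpha/2-1}\sum_{i=1}^d \EE|z_{k,i}|^\alpha \le C_{p,\alpha}\, d^{\alpha/2}.
\]
Markov's inequality at threshold $t = C'_{p,\alpha}\sqrt{d}/\delta^{1/\alpha}$, together with a union bound over $k=1,\ldots,p$, closes the argument.

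The only delicate step is the moment estimate above. A crude Chebyshev on $\|\bz_k\|^2$ would deliver only $\sqrt{d/\delta}$; obtaining the sharper $\sqrt{d}/\delta^{1/\alpha}$ scaling required by the statement needs the two-level argument---an inner Rosenthal inequality on each coordinate (which essentially uses the $\ell_\alpha$ weight bound $\sum|a|^\alpha\le 1$ available only because $\alpha\ge 2$) followed by an outer Jensen step on the sum of squares. Conceptually, this is precisely where the heavy-tail exponent $\alpha$ enters, and it is the mechanism by which a single sample-split power iteration can attain the $\sqrt{d}/\lambda$ rate without ever paying the $\|\scrE\|$ price.
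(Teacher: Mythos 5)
Your proof is correct and follows essentially the same route as the paper's: decompose the power-iteration update into the signal term $\lambda c_k\bu_k$ and the noise $\bz_k=\scrE\times_{j\neq k}\bx_j^{[0]}$, observe that conditional on the initializers the coordinates of $\bz_k$ are independent with finite $\alpha$th moment (Rosenthal), bound $\EE\|\bz_k\|^\alpha\lesssim d^{\alpha/2}$, and finish with Markov's inequality at level $\alpha$. The only difference is cosmetic: where the paper invokes Rosenthal a second time on $\sum_i E_i^2$ to control $\EE\|\bz_k\|^\alpha$, you instead use the power-mean/Jensen bound $(\sum_i z_{k,i}^2)^{\alpha/2}\le d^{\alpha/2-1}\sum_i|z_{k,i}|^\alpha$; both give the same $d^{\alpha/2}$ order, and your version is slightly more elementary and more explicit (including the $\ell_\alpha$ weight bound $\sum_{i_{-k}}|a_{i_{-k}}|^\alpha\le 1$, which the paper leaves implicit). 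You also correctly identify where the trivial bound of $1$ enters, matching the $\max\{\cdot,1\}$ in the statement.
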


Proposition \ref{pr:iter} immediately suggests a simple strategy to estimate $\bu_k$s when we observe, in addition to $\mathscr{X}$, another independent copy of it, denoted by $\tilde{\mathscr{X}}$: first apply robust tensor SVD to $\tilde{\mathscr{X}}$, and then update the estimated singular vectors using \eqref{eq:iter}. As a direct consequence of Theorem \ref{th:robust1} and Proposition \ref{pr:iter}, the resulting estimate $\tilde{\bu}_k$s satisfy:
\begin{equation}
\label{eq:two}
\max_{1\le k\le p}\sin\angle(\tilde{\bu}_k, \bu_k)\lesssim_p {\sqrt{d}\over \lambda}.
\end{equation}
if $\lambda\ge Cd^{p/4}\log d$ for a sufficiently large constant $C>0$.

Of course, we do not have another copy of $\scrX$. To overcome this obstacle, we randomly partition the tensor into two halves along its $p$-th mode. Denote the two halves of indices by $J_1$ and $J_2$. We use the tensor $\scrX_1,$ with indices $[d]^{p-1}\times J_1$ for nontrivial initialization, and $\scrX_2$ with indices $[d]^{p-1}\times J_2$ for iteration. It can be derived from the scaled Chernoff bound that
$$
\PP\left(\sum_{i\in J_1}u_{pi}^2\ge 0.25 \right) \le \exp\left(-1/16\mu_2^2\right).
$$
See, e.g., Theorems 1, 2 and the subsequent remarks of \cite{raghavan1988probabilistic}. Note that we can write 
$$
\scrX_1=\lambda\norm*{\bu_{p,J_1}} \bu_1\otimes\dots\otimes\bu_{p-1}\otimes {\bu_{p,J_1}\over \norm*{\bu_{p,J_1}} }.
$$

The last two equations imply that $\scrX_1$ has a signal strength of at least
$$0.5\lambda> Cd^{p/4}(\log d)^{1/4}.$$
Thus we can use Theorem \ref{th:robust1}, assuming all the incoherence conditions are satisfied, to get estimates $\hat{\bv}_k$ such that
\begin{equation}\label{eq:init}
	\max_{1\le k\le p-1}\sin\angle\left(\hat{\bv}_k,\,\bu_k\right)\le \eta,
\end{equation}
for some constant $\eta<1$. Following \eqref{eq:iter}, we can use $\bx^{[0]}_t=\hat{\bv}_k$ with the second tensor $\scrX_2$ to yield an improved estimate  of $\bu_k$, denoted by $\hat{\bu}_k$. Notice that $\hat{\bv}_k$ are independent of $\scrX_2.$ In light of Proposition \ref{pr:iter},
we get

\begin{theorem}
\label{th:main2}
Assume that the entries of $\scrE$ are independent and identically distributed with zero mean, unit variance and $\EE|E|^{\alpha}<\infty$ for some $\alpha\ge 2$. There exist constants $C_1,C_2, C_3>0$ such that if $\lambda> C_1d^{p/4}(\log d)^{1/4}$ and $\max_{1\le k\le p}\|\bu_k\|_{\ell_\infty}\le C_2(\log d)^{-1}$, then there is a polynomial time computable estimate $\hat{\bu}_k$ ($k=1,\ldots,p$) obeying
$$
\PP\left\{\max_{1\le k\le p}\sin\angle(\hat{\bu}_k,\bu_k)\le {C_3\sqrt{d}\over \lambda t}\right\}\ge 1-t^\alpha
$$
for any $0<t<1$.
\end{theorem}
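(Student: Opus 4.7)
The plan is to combine sample splitting with the two building blocks already established: the robust HOSVD bound (Theorem~\ref{th:robust1}) for initialization and Proposition~\ref{pr:iter} for one refining iteration. The key point is that Proposition~\ref{pr:iter} gives the target rate $\sqrt{d}/\lambda$ with the desired tail behavior $1-t^\alpha$ (take $\delta=t^\alpha$), provided we feed it initial estimates that (i) are independent of the data on which the power iteration is run and (ii) satisfy $\sin\angle(\bx_k^{[0]},\bu_k)\le\eta$ for some fixed $\eta<1$. Sample splitting is what delivers both properties, and the incoherence assumption $\|\bu_k\|_{\ell_\infty}\le C_2(\log d)^{-1}$ is what allows the split to preserve enough signal on each half.

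Concretely, I would randomly partition the index set of the $p$-th mode into two halves $J_1, J_2$, producing the sub-tensors $\scrX_1, \scrX_2$ of the excerpt, which are independent conditionally on the partition and satisfy
\[
\scrX_i=\lambda\|\bu_{p,J_i}\|\,\bu_1\otimes\cdots\otimes\bu_{p-1}\otimes\frac{\bu_{p,J_i}}{\|\bu_{p,J_i}\|}+\scrE_i.
\]
By the scaled Chernoff bound already cited in the excerpt, $\|\bu_{p,J_i}\|^2\ge 1/4$ for both halves with probability at least $1-2\exp(-c/\mu_2^2)$, which under the incoherence assumption is $1-o(1)$. Hence each half carries an effective signal of size at least $\lambda/2\gtrsim d^{p/4}(\log d)^{1/4}$ and still satisfies the incoherence conditions needed to invoke Theorem~\ref{th:robust1}.

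Next, I apply the robust HOSVD of Theorem~\ref{th:robust1} to $\scrX_1$ with $n\sim \log d$ groups. The bound in the theorem becomes
\[
\sin\angle(\hat{\bv}_k,\bu_k)\lesssim \mu_2^2+\sqrt{\tfrac{\log d}{n}}+\tfrac{(\lambda\sqrt{d}+d^{p/2})\sqrt{\log d}}{\lambda^2},
\]
which, with $n=C\log d$, $\mu_2\le C_2(\log d)^{-1}$ and $\lambda\gtrsim d^{p/4}(\log d)^{1/4}$, is at most some fixed $\eta<1$ for $k=1,\dots,p-1$ with probability $1-O(d^{-1}\log d)$. These estimates are functions of $\scrX_1$ only, hence independent of $\scrX_2$. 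Feeding them into Proposition~\ref{pr:iter} applied to $\scrX_2$ (choosing $\delta=t^\alpha$) yields, for every $0<t<1$,
\[
\max_{1\le k\le p-1}\sin\angle(\hat{\bu}_k,\bu_k)\le \frac{C\sqrt{d}/t}{\lambda\,(1-\eta^2)^{(p-1)/2}}
\]
with probability at least $1-t^\alpha$; absorbing $(1-\eta^2)^{-(p-1)/2}$ into $C_3$ gives the claimed rate. A final union bound over the Chernoff event and the initialization event (both $o(1)$) completes the probability accounting.

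The part I expect to be most delicate is handling the $p$-th singular vector. Because we split along the $p$-th mode, the mode-$p$ direction supplied by the robust HOSVD on $\scrX_1$ lives in $\RR^{|J_1|}$, not $\RR^d$, and similarly the mode-$p$ contraction of $\scrX_2$ against $\hat{\bv}_1,\dots,\hat{\bv}_{p-1}$ only estimates $\bu_p|_{J_2}$. The natural fix, which I would adopt, is to run the whole pipeline twice with the roles of $J_1$ and $J_2$ reversed, so that one pass produces an estimate of $\bu_p|_{J_2}$ and the other of $\bu_p|_{J_1}$; concatenating and renormalizing gives $\hat{\bu}_p\in\RR^d$ with the same $\sqrt{d}/(\lambda t)$ rate up to a constant. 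A minor accompanying task is verifying that Theorem~\ref{th:robust1}, stated for $d^{\otimes p}$, carries over to the asymmetric shape $[d]^{p-1}\times|J_i|$ with $|J_i|\asymp d$; this is a straightforward re-reading of its proof since every bound scales with the total number of entries in the tensor.
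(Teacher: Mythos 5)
Your overall architecture (split along mode $p$, robust HOSVD on $\scrX_1$, one power step on $\scrX_2$, reverse roles to stitch together $\hat{\bu}_p$) matches the paper's, and the Chernoff and incoherence accounting is sound. But there is a genuine gap in the step where you ``feed them into Proposition~\ref{pr:iter} applied to $\scrX_2$'' to obtain $\hat{\bu}_1,\dots,\hat{\bu}_{p-1}$.

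Proposition~\ref{pr:iter}, for the update $\bx_k^{[1]}\propto \scrX_2\times_{j\neq k}\bx_j^{[0]}$, requires an initialization $\bx_j^{[0]}$ for \emph{every} mode $j\neq k$, all independent of $\scrX_2$. When $k\in\{1,\dots,p-1\}$, the contraction is over mode $p$ as well, so you need a vector $\bx_p^{[0]}\in\RR^{|J_2|}$, independent of $\scrX_2$, that is nontrivially correlated with $\bu_{p,J_2}$. The robust HOSVD on $\scrX_1$ gives you no such vector: it only estimates $\bu_{p,J_1}/\|\bu_{p,J_1}\|$ on the disjoint index set $J_1$. You notice this problem, but only in the context of recovering $\hat{\bu}_p$ itself; the same problem silently invalidates your use of Proposition~\ref{pr:iter} for modes $1,\dots,p-1$.

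The paper's proof fills this hole with a genuinely separate argument. It first obtains $\hat{\bu}_{p,J_2}$ from Proposition~\ref{pr:iter} (which is legitimate, since that iteration only needs $\hat{\bv}_1,\dots,\hat{\bv}_{p-1}$). It then handles mode $1$ by forming the matrix $\bE=\scrE_2\times_2\hat{\bv}_2\cdots\times_{p-1}\hat{\bv}_{p-1}$ (whose entries are independent with finite $\alpha$-th moment, conditional on $\scrX_1$), and splitting $\bE\hat{\bu}_{p,J_2}$ as $\bE(\bu_{p,J_2}/\|\bu_{p,J_2}\|)+\bE(\hat{\bu}_{p,J_2}-\bu_{p,J_2}/\|\bu_{p,J_2}\|)$. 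The first term has independent entries and is $O(\sqrt{d}/t)$ with probability $1-t^\alpha$; the second is bounded by $\|\bE\|\cdot\|\hat{\bu}_{p,J_2}-\bu_{p,J_2}/\|\bu_{p,J_2}\|\|$, and controlling $\|\bE\|$ requires invoking Theorem~\ref{th:norm_upper} with Talagrand once more, plus a check that $\lambda\gtrsim d^{1/2+1/(\alpha\mathbbm{1}(\alpha<4))}$ is implied by the stated signal-strength condition. This chunk of work is not subsumed by Proposition~\ref{pr:iter}, and your proposal as written would not produce the $1-t^\alpha$ tail for modes $1,\dots,p-1$ without it.Your overall architecture (split along mode $p$, robust HOSVD on $\scrX_1$, one power step on $\scrX_2$, swap the roles of $J_1$ and $J_2$ to assemble $\hat{\bu}_p$) is the same as the paper's, and the Chernoff and incoherence accounting is sound. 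But there is a genuine gap in the step where you ``feed them into Proposition~\ref{pr:iter} applied to $\scrX_2$'' to obtain $\hat{\bu}_1,\dots,\hat{\bu}_{p-1}$.

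Proposition~\ref{pr:iter}, for the update $\bx_k^{[1]}\propto \scrX_2\times_{j\neq k}\bx_j^{[0]}$, requires an initialization $\bx_j^{[0]}$ for \emph{every} mode $j\neq k$, all independent of $\scrX_2$. When $k\in\{1,\dots,p-1\}$, the contraction runs over mode $p$ as well, so you need a vector $\bx_p^{[0]}\in\RR^{|J_2|}$, independent of $\scrX_2$, with $\sin\angle(\bx_p^{[0]},\bu_{p,J_2})\le\eta$. The robust HOSVD on $\scrX_1$ gives no such vector: it estimates $\bu_{p,J_1}/\|\bu_{p,J_1}\|$, which lives on the disjoint index set $J_1$. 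You flag this issue, but only as an obstacle to recovering $\hat{\bu}_p$ itself; the same issue silently invalidates your direct application of Proposition~\ref{pr:iter} for modes $1,\dots,p-1$, because the only available mode-$p$ iterate, $\hat{\bu}_{p,J_2}$, is a function of $\scrX_2$ and hence not independent of it.

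The paper closes this gap with a separate argument that is not a formality. It first obtains $\hat{\bu}_{p,J_2}$ from Proposition~\ref{pr:iter} (legitimate, since that update only contracts against $\hat{\bv}_1,\dots,\hat{\bv}_{p-1}$, all independent of $\scrX_2$). It then handles mode $1$ by forming $\bE=\scrE_2\times_2\hat{\bv}_2\cdots\times_{p-1}\hat{\bv}_{p-1}$, whose entries are independent with finite $\alpha$-th moment conditionally on $\scrX_1$, and writes the problematic noise contraction as
$$\bE\hat{\bu}_{p,J_2}=\bE\frac{\bu_{p,J_2}}{\|\bu_{p,J_2}\|}+\bE\left(\hat{\bu}_{p,J_2}-\frac{\bu_{p,J_2}}{\|\bu_{p,J_2}\|}\right).$$
The first term has independent entries and is $O(\sqrt{d}/t)$ with probability $1-t^\alpha$; the second is bounded by $\|\bE\|$ times the already-established error of $\hat{\bu}_{p,J_2}$, which requires a separate operator-norm bound on $\bE$ (Theorem~\ref{th:norm_upper} plus Talagrand) and a check that $\lambda\gtrsim d^{1/2+1/(\alpha\mathbbm{1}(\alpha<4))}$ follows from $\lambda\gtrsim d^{p/4}(\log d)^{1/4}$. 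Without this cross-term analysis your proposal does not deliver the $1-t^\alpha$ tail for modes $1,\dots,p-1$, so you should add it explicitly rather than treating the $p$-th mode as the only delicate point.
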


Note that the additional requirement of $\max_{1\le k\le p}\|\bu_k\|_{\ell_\infty}\le C_2(\log d)^{-1}$ ensures that the singular vectors are not too concentrated on a few coordinates and therefore allows us to capture the signal even after the sample splitting. In the event that this is not the case, our task can be effectively reduced to a problem of lower order. To see this, assume, without loss of generality, that $u_{11}=\|\bu_1\|_{\ell_\infty}\gtrsim (\log d)^{-1}$. Denote by $\scrX_j$ the $j$th slice of $\scrX$ along its first mode. It is clear that
$$
\scrX_1=\tilde{\lambda}\bu_2\otimes\cdots\otimes\bu_p+\scrE_1,
$$
where
$$
\tilde{\lambda}=\lambda u_{11}\gtrsim d^{p/4}{\rm polylog}(d),
$$
by assumption. Note that the signal strengths $\lambda$ and $\tilde{\lambda}$ are of the same order up to the logarithmic factor. However, $\scrX$ is a $p$th order tensor and $\scrX_1$ is of order $(p-1)$. It is therefore conceivable that estimating the singular vectors of $\scrX_1$ could be easier because of the relative higher signal-to-noise ratio.

Finally, notice that the robust estimation method of the present section does not depend on $\alpha$, provided $\alpha\ge 2$. This allows the user to apply this method without any prior knowledge about the error distribution. The numerical experiments of Section 5 also support this claim. When the signal strength condition is satisfied, the performance of the robust estimators does not depend on the number of moments of the errors.

\section{Numerical Experiments}

To complement the theoretical developments, we also conducted several sets of numerical experiments. In the first set of simulation
we set $d=400,$ $\scrT=\lambda \bu_1\otimes \bu_2\otimes \bu_3+\mathscr{E},$ where $\lambda= 3d^{3/4}$ and $\bu_1,\bu_2,\bu_3$ were sampled uniformly from the unit sphere.  The elements of $\mathscr{E}$ are independently simulated from symmetrized and appropriately scaled Pareto distributions. More specifically, we generated $E_{ijk}=P_{ijk}R_{ijk}/\sqrt{\nu/(\nu-2)},$ where $P_{ijk}\sim \mathrm{Pareto}(\nu)$ and $R_{ijk}$s are i.i.d. Rademacher random variables. The rescaling was done to ensure the errors have unit variance. Note that $E_{ijk}$ has finite $\alpha$th moment if and only if $\nu>\alpha$. We therefore varied $\nu$ to simulate noises satisfying different moment conditions. We ran the algorithm in Section \ref{sec:rob_svd} with an initial guess of $3000$ for $\lambda$. Even though this is a huge overestimate, it does not affect the final results. For comparison, we also computed the na\"ive estimate based on HOSVD. The results from 1000 simulation runs for each value of $\nu$ are summarized in Figure \ref{fig:sim-pareto}. It can be observed that the robust method provides an estimate that is strongly correlated with the true vector $\bu_1$, irrespective of $\nu$. On the other hand, the na\"ive estimate is almost orthogonal to the signal direction for smaller values of $\nu$, but its performance improves as $\nu$ approaches 4, as predicted by Proposition \ref{pr:matricize4th}.

\input{sim-pareto.tex}

We next provide a numerical experiment to corroborate the signal strength requirements for consistent estimation. The setup is similar to before and we fixed $\nu=2.1$ and varied $d$ from $250$ to $500$. We took $\lambda=3d^\xi$ for $\xi=0.6,0.75,\dots,1.2$ to correspond to different signal strength. The result, again summarized from 1000 simulation runs, is presented in Figure \ref{fig:pareto-xi-dim}. It indicates that $\xi=3/4$ is indeed  the correct computational threshold. When $\xi<0.75$, neither of the methods is successful. However, as soon as $\xi$ reaches $0.75$, the robust SVD method from Section \ref{sec:rob_svd} is able to provide nontrivial estimates. The accuracy improves as $\xi$ increases further. On the other hand, the na\"ive estimator performs poorly for $\xi$ as large as $1.05$, where it has a very large variance, before transitioning to a better estimate at $\xi=1.2$.

\input{pareto-xi-dim.tex}

To investigate the possible effect of different error distributions or lack thereof, we also considered a simulation setting similar to the one used by \cite{ding2020estimating}. We fixed $d=400$ and set $\scrT=\lambda \bu_1\otimes \bu_2\otimes \bu_3+\mathscr{E},$ where $\lambda= 1.5d^{3/4}$ and $\bu_1,\bu_2,\bu_3$ are sampled uniformly from the unit sphere. The errors are independently distributed as $R_{ijk}X_{ijk}$ where $R_{ijk}$s are Rademacher random variables while 
$X_{ijk}=-\sqrt{\dfrac{d-\nu}{\nu}}$ with probability $\dfrac{\nu}{d}$
and $X_{ijk}=\sqrt{\dfrac{\nu}{d-\nu}}$ with probability $1-\dfrac{\nu}{d}.$ The distribution becomes lighter tailed as $\nu$ increases. The robust method still has better performance than the na\"ive one, even for much lighter tailed errors. We arbitrarily fixed the truncation parameter $\theta=0.2$ and used a single robust iteration with no sample splitting. As shown by \cite{ding2020estimating}, this error distribution can worsen the performance of elementwise truncation, however our experiment results, summarized from 1000 simulations in Figure \ref{fig:sim-mix}, confirms that this has no effect on the spectrum truncated estimators that we proposed.

\input{sim-mix.tex}

We also examined the effect of signal strength for this noise distribution. We fixed the mixture parameter $\nu=0.1$ and vary the dimension $d$ from $200$ to $450$, while setting $\lambda=1.5 d^\xi$. The results summarized from 1000 simulations is given in Figure \ref{fig:mix-xi-dim}. The observation is similar to before: the robust SVD method is successful whenever $\xi\ge 0.75$. The na\"ive estimator is almost orthogonal to the signal till $\xi=0.8$, then goes through a high variance phase at $\xi=0.85$, finally providing a nontrivial estimate only when $\xi=0.9$.

\bigskip

\input{mix-xi-dim.tex}

\section{Concluding Remarks}

In this paper, we study the problem of estimating the rank-one spikes in the presence of heavy tailed noises. Our contributions are three-fold. First we investigate the performance of estimates from tensor SVD, perhaps the most natural approach especially if we neglect the computational cost. Our results identify the signal strength requirement for the tensor SVD to yield rate-optimal estimates. (Nearly) matching lower bounds are also given to show that these requirements are optimal in the sense that the tensor SVD is necessarily inconsistent if the signal strength is below these requirement.

Our analysis of the tensor SVD exploits its close connection with the spectral norm of random tensors, and our second contribution is to establish upper bounds and (nearly) matching lower bounds for a tensor consisting of independent mean zero random variables. Our bounds pinpoint the connection between spectral norm of a random tensor and the moment condition for its entries.

Finally, we develop procedures for estimating the singular vectors under heavy tailed noises that are tractable in that they are polynomial time computable, practical in that they are easy to implement, and yields estimates that converge to the true parameter at the optimal rate. In particular, we show that similar to the case with Gaussian noise, a single power iteration with spectral initialization suffices if the entries of the noise have finite $4(p-1)$th moment. If the entries have finite fourth moment but infinite $4(p-1)$th moment, then we need to do $O(\log d)$ number of power iterations. If the entries do not have finite fourth moment, we need a different strategy. This new procedure combines robust matrix estimation and sample splitting, and can be shown as both tractable and rate optimal.


\section{Proofs}
\subsection{Moment Bounds for Random Tensors}
The proof of Theorems \ref{th:moment4} and \ref{th:moment2} uses Talagrand's concentration inequality for convex Lipschitz functions combined with estimates of higher order moments via Khintchine and Rosenthal inequalities. In particular, it relies on the following moment bound for random tensors which may be of independent interest.

\begin{theorem}\label{th:norm_upper} 
Let $\mathscr{E}\in \RR^{d\times\cdots\times d}$ be a $p$th order random tensor whose entries are independent such that $\EE E_{i_1\dots i_p}=0$ and $\EE E^2_{i_1\dots i_p}=\sigma^2_{i_1\dots i_p}.$ Then for any $q\ge 1,$ there is a constant $C_{p}$ depending only on $p$ such that 
\begin{eqnarray*}
(\EE\norm*{\mathscr{E}}^q)^{\tfrac{1}{q}}\le &&C_{p}\sqrt{d\log d}\left(1+\max_{i_1,\ldots, i_p\in[d]} \sigma_{i_1\dots i_p} \right)\\
&&+C_{p}(\log d)^{\tfrac{3}{2}}\left(\sum_{k=1}^p\EE\left(\max_{i_l\in[d],l\neq k}\sum_{i_k=1}^d\left( E_{i_1\dots i_p}^2-\sigma^2_{i_1\dots i_p}\right)\right)^{\tfrac{q}{2}}\right)^{\tfrac{1}{q}}.
\end{eqnarray*}
\end{theorem}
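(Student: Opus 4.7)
The plan follows the approach used by Latala for random matrices and extended by Nguyen to tensors, combining Talagrand's concentration inequality for convex Lipschitz functions with iterated Khintchine and Rosenthal moment inequalities. After a standard symmetrization step, I would truncate each entry at a level $T$ to be tuned, writing $E_{i_1\ldots i_p}=E'_{i_1\ldots i_p}+E''_{i_1\ldots i_p}$ with $E'$ the centered truncation at $T$, so that $\scrE=\scrE'+\scrE''$ and hence $\|\scrE\|\le \|\scrE'\|+\|\scrE''\|$.

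For the bounded piece $\scrE'$, observe that $\scrE'\mapsto\|\scrE'\|$ is convex and $1$-Lipschitz in the $\ell_2$ norm on entries, so Talagrand's inequality controls its fluctuations around $\EE\|\scrE'\|$ on the scale $T\sqrt{q}$. To bound the mean, I would pass to a $1/2$-net $\calN\subset (\SS^{d-1})^p$ with $\log|\calN|\lesssim pd$; for each fixed tuple $(\ba_1,\ldots,\ba_p)\in\calN$, the inner product $\langle \scrE',\ba_1\otimes\cdots\otimes\ba_p\rangle$ is a sum of independent bounded mean-zero variables whose moments are sharply controlled by Khintchine's inequality in terms of $\max_{i_1,\ldots,i_p}\sigma_{i_1\ldots i_p}^2$. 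Selecting $T\asymp \sqrt{d/\log d}$ balances the sub-Gaussian and sub-exponential regimes in the resulting Bernstein bound, and a union bound over $|\calN|\le \exp(Cpd)$ points yields the first term $C_p\sqrt{d\log d}\bigl(1+\max_{i_1,\ldots,i_p}\sigma_{i_1\ldots i_p}\bigr)$ of the claimed moment inequality.

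For the heavy-tailed piece $\scrE''$, whose entries are supported on $\{|E_{i_1\ldots i_p}|>T\}$, I would bound $\|\scrE''\|$ mode by mode, iterating Rosenthal's moment inequality across the $p$ modes to show that $\EE\|\scrE''\|^q$ is controlled by the $q/2$-th moments of the largest centered mode-$k$ fiber squared sum. The elementary inequality $(E''_{i_1\ldots i_p})^2\lesssim (E_{i_1\ldots i_p}^2-\sigma_{i_1\ldots i_p}^2)_+$, valid once $T\gtrsim \max_{i_1,\ldots,i_p}\sigma_{i_1\ldots i_p}$, translates truncated-entry fiber sums into the claimed centered fiber statistics $\max_{i_l,\,l\ne k}\sum_{i_k}(E_{i_1\ldots i_p}^2-\sigma_{i_1\ldots i_p}^2)$. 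The $(\log d)^{3/2}$ factor then emerges from the Rosenthal conversion of $L^q$ norms into maxima over $d^{p-1}$ mode-$k$ index tuples together with the entropy contribution from the net used in the iteration.

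The main obstacle will be the heavy-tailed step: showing that $\EE\|\scrE''\|^q$ is controlled by only the mode-$k$ fiber statistics, without extra powers of $d$ creeping in, requires a careful iterative peeling of the modes via Rosenthal and tight bookkeeping of the moment level $q$ chosen from the entropy $\log|\calN|$. The logarithmic exponent $3/2$ in particular depends on the precise interplay between the truncation threshold $T$, the variance term, and the tail moments accessed through Rosenthal, and closing the argument likely requires refining the tensor moment estimates of Nguyen~(2015) to match the entropy-balanced choice of $T$ dictated by the bounded part.
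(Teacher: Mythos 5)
Your plan diverges from the paper's actual argument, and it runs into a quantitative obstruction in the ``bounded piece'' step that I do not think you can repair without essentially switching to the paper's chaining scheme.

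You propose to truncate at level $T$, control the bounded piece $\scrE'$ by a standard $1/2$-net over $(\SS^{d-1})^p$ with $\log|\calN|\lesssim pd$, and then apply a Bernstein-type bound plus a union bound. But this does not close: for a fixed $(\ba_1,\dots,\ba_p)\in\calN$, the variable $\langle\scrE',\ba_1\otimes\cdots\otimes\ba_p\rangle$ is a sum of independent terms with variance proxy $v^2\le\max_{i_1\dots i_p}\sigma^2_{i_1\dots i_p}$ and sup-norm $M\asymp T\max_{i_1,\dots,i_p}|(a_1)_{i_1}\cdots(a_p)_{i_p}|$, and a $1/2$-net of $\SS^{d-1}$ must contain spiky vectors with $\max_i|(a_k)_i|\asymp 1$, so the worst-case $M\asymp T$. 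The Bernstein (or Hoeffding) bound then requires $t\gtrsim T\sqrt{pd}$ to beat the entropy $\exp(Cpd)$. With your balanced choice $T\asymp\sqrt{d/\log d}$ this gives $\|\scrE'\|\lesssim d/\sqrt{\log d}$, which is much larger than the target $\sqrt{d\log d}$. Choosing $T$ small enough to fix this forces $T\lesssim\sqrt{\log d}$, at which point the truncated-out piece $\scrE''$ retains essentially all of the variance of $\scrE$ and its norm is again of order $\sqrt{d}$ or worse with no gain from the fiber statistics. This is the classical failure mode of the ``truncate then $\eps$-net'' approach that motivated Lata\l{}a's chaining construction for random matrices in the first place; citing Lata\l{}a in your first sentence and then reverting to a flat $1/2$-net drops exactly the ingredient that makes the bound work.

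What the paper actually does is different in two places. First, instead of truncating the entries, it symmetrizes with \emph{Gaussian} multipliers $H_{i_1\dots i_p}=E_{i_1\dots i_p}Z_{i_1\dots i_p}$ and works conditionally on $\scrE$; this turns the tensor contraction into a Gaussian chaos whose concentration is governed by a Lipschitz constant, and Talagrand's Gaussian concentration (not Bernstein) is applied at that level. Second, and crucially, it does not use a flat $1/2$-net: it uses Lata\l{}a's dyadic decomposition $N_L=\{\bx:\|\bx\|\le 1,\,x_i^2\in\{0,2^{-L},\dots,1\}\}$, projections $\Pi_{=l},\Pi_{<l}$, and the sharp entropy estimate $|N_{<l}|\le\exp\bigl(C2^l(1+L-l)\bigr)$, so that each dyadic level has entropy roughly $2^l$ rather than $d$, matched against a level-dependent Lipschitz constant $\norm*{f_l}_{\mathrm{Lip}}$. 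The geometric summation over $l$ is what produces $\sqrt{d\log d}$ for the variance contribution and $(\log d)^{3/2}$ for the deviation contribution, and the split $E^2=(E^2-\sigma^2)+\sigma^2$ happens \emph{inside} the Lipschitz-constant estimate, not as a truncation of the raw entries. Your Rosenthal-peeling idea for the heavy piece also has the problem that $\|\scrE''\|$ is a supremum over unit vectors, so Rosenthal alone controls no such supremum; you would still need a net or chaining there, and you give no argument that avoids reintroducing the same loss. If you want to pursue a truncation-based route you would need to redo the bounded piece with the Lata\l{}a dyadic net rather than a flat one, at which point you have essentially reconstructed the paper's proof with an extra unnecessary layer.
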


Note that we do not assume that the entries of $\mathscr{E}$ are identically distributed in Theorem \ref{th:norm_upper}. In fact, it follows directly that the upper bounds in Theorems \ref{th:moment4} and \ref{th:moment2} continue to hold if we have independent, but not necessarily identically distributed errors, as long as the moment conditions are satisfied. We opt for the current version of Theorems \ref{th:moment4} and \ref{th:moment2} for ease of exposition.

It is not hard to see that
$$\norm*{\mathscr{E}}\ge \underset{k\in[p]}{\max}\underset{i_l\in[d],l\neq k}{\max}\left(\sum_{i_k=1}^d E_{i_1\dots i_p}^2\right)^{1/2}.$$ 
This immediately suggests that
$$
(\EE\norm*{\mathscr{E}}^q)^{\tfrac{1}{q}}\gtrsim\left(\sum_{k=1}^p\EE\left(\max_{i_l\in[d],l\neq k}\sum_{i_k=1}^d E_{i_1\dots i_p}^2\right)^{\tfrac{q}{2}}\right)^{\tfrac{1}{q}}.
$$
The lower bound above matches the upper bound in Theorem \ref{th:norm_upper} up to the $\log d$ terms for any fixed $p$. Indeed, a close inspection of the proof of of Theorem \ref{th:norm_upper} indicates that the $\log d$ terms in the upper bound may be removed altogether with some stronger moment assumptions. The proof of Theorem \ref{th:norm_upper} relies on a scheme developed earlier by \cite{latala1997estimation} and is similar in spirit to that from \cite{nguyen2015tensor}. 

\begin{proof}[Proof of Theorem \ref{th:norm_upper}]  By the standard symmetrization argument and conditioning \citep[see, e.g., Lemma 5 of ][]{nguyen2015tensor},
	\begin{equation}\label{eq:symm_cond}
	\left(\EE(\norm*{\mathscr{E}})^q\right)^{1/q}\le \sqrt{2\pi}\EE_{\mathscr{E}}\left(\EE\left(\norm{\mathscr{H}}^q\big|\mathscr{E}\right)\right)^{1/q},
	\end{equation}
	where $\mathscr{H}$ is a $d\times\dots \times d$ tensor with entries $ H_{i_1\dots i_p}= E_{i_1\dots i_p}Z_{i_1\dots i_p}$, $Z_{i_1\dots i_p}\stackrel{iid}{\sim}N(0,1)$. We will first show that for any fixed tensor $\mathscr{E},$ $\mathscr{H}$ defined above satisfies
	\begin{equation}\label{eq:norm_main}
		\begin{split}
			(\EE\norm*{\mathscr{H}}^q)^{1/q}\le &\,\, C_p\sqrt{d\log d}(1+\max \sigma^2_{i_1\dots i_p})^{1\over 2}
			\\&+C_{p}(\log d)^{\tfrac{3}{2}}\left(\sum_{k=1}^p\left(\max_{i_l\in[d],l\neq k}\sum_{i_k=1}^d E_{i_1\dots i_p}^2-\sigma^2_{i_1\dots i_p}\right)^{q\over 2}\right)^{1\over q}.
		\end{split}
	\end{equation}
To this end, we shall use an $\eps$-net argument.

For any integer $L,$ write $S_L=\{0,1,\dots,2^{-L}\}.$ It follows from Lemma 10 of \cite{nguyen2015tensor} that the set $N_L=\{\bx\in \RR^d:\norm{\bx}\le 1,x_i^2\in S_L\}$ forms a $(1/2)$-net for $\SS^{d-1}$ by taking $L=\log d+c_0$ for some constant $c_0$. Now define the projections
$$\Pi_{=l}(\bx)_i=x_i\mathbbm{1}(x_i^2=2^{-l})\qquad {\rm and}\qquad \Pi_{<l}(\bx)_i=x_i\mathbbm{1}(x_i^2\ge 2^{-l}).$$
Let $N_{=l}=\Pi_{=l}(N_L)$ and $N_{<l}=\Pi_{<l}(N_L)$ for $1\le l\le L$. Note that for any $\bx\in N_L,$
$$\bx=\sum_{l=1}^L\Pi_{=l}(\bx)\qquad {\rm and} \qquad \sum_{m<l}\Pi_{=m}(\bx)=\Pi_{<l}(\bx).$$ 
Expanding the sum for each vector $\bx_j,$  we get
	$$\begin{aligned}\mathscr{H}\times_2\bx_2\dots\times_p\bx_p&=\sum_{l_1=1}^L\dots\sum_{l_p=1}^L\mathscr{H}\times_2\Pi_{l_2}(\bx_2)\dots\times_p\Pi_{l_p}(\bx_p)
	\\&=\sum_{k=2}^p\underset{\mathrm{argmax} \,l_i=k}{\sum_{l_k=1}^L}\sum_{\underset{i\neq k}{l_i\le l_j}}\mathscr{H}\times_2\Pi_{l_2}(\bx_2)\dots\times_p\Pi_{l_p}(\bx_p)
	\\&=\sum_{k=2}^p\sum_{l_k=1}^L\mathscr{H}\times_2\left(\sum_{l_2\le l_k}\Pi_{l_2}(\bx_2)\right)\dots\times_k\left(\Pi_{l_k}(\bx_k)\right) \dots\times_p\left(\sum_{l_p\le l_k}\Pi_{l_p}(\bx_p)\right)
	\\&=\sum_{k=2}^p\sum_{l=1}^L\mathscr{H}\times_2\Pi_{<l}(\bx_2)\dots\times_{k-1}\Pi_{<l}(\bx_{k-1})\times_k\Pi_l(\bx_k)\times_{k+1}\dots\times_p\Pi_{<l}(\bx_p).
	\end{aligned}$$
By triangle inequality,
	\begin{equation}\label{eq:sumterms}\begin{split}&\norm*{\mathscr{H}}^2 =\sup_{\bx_2,\dots,\bx_p\in \calS^{d-1}}\norm*{ \mathscr{H}\times_2\bx_2\dots\times_p\bx_p}^2\\
	&\le 2^{2p-2} \max_{\bx_2,\dots,\bx_p\in N_L}\norm*{ \mathscr{H}\times_2\bx_2\dots\times_p\bx_p}^2
	\\&\le 2^{2p-2}\max_{\bx_2,\dots,\bx_p\in N_L}\left[\sum_{k=2}^p\norm*{\sum_{l=1}^L \mathscr{H}\times_2\Pi_{<l}(\bx_2)\dots\times_{k-1}\Pi_{<l}(\bx_{k-1})\times_k\Pi_l(\bx_k)\times_{k+1}\dots\times_p\Pi_{<l}(\bx_p)}\right]^2
	\\&\le 2^{2p-2}p\sum_{k=2}^p\max_{\bx_2,\dots,\bx_p\in N_L}\norm*{\sum_{l=1}^L \mathscr{H}\times_2\Pi_{<l}(\bx_2)\dots\times_{k-1}\Pi_{<l}(\bx_{k-1})\times_k\Pi_l(\bx_k)\times_{k+1}\dots\times_p\Pi_{<l}(\bx_p)}^2.
	\end{split}\end{equation}
Because of symmetry, we shall focus on $k=2$ without loss of generality. To simplify notation, let us denote
	$$\bT_l(\bx_1,\dots,\bx_p)=\mathscr{H}\times_2\Pi_{=l}(\bx_2)\times_3\Pi_{<l}(\bx_3)\dots\times_p\Pi_{<l}(\bx_p).$$
	For any fixed $\bx_2,\dots,\bx_p,$ we have 
	$$\norm*{\displaystyle\sum_{l=1}^L\bT_l(\bx_2,\dots,\bx_p)}\le \displaystyle\sum_{l=1}^L\norm*{\bT_l(\bx_2,\dots,\bx_p)}.$$
	
	\noindent Note that $\bT_l\sim N\left(\mathbf{0},\mathrm{diag}(\sigma_{1l}^2,\dots,\sigma_{dl})^2\right),$ where 
	$$\sigma_{i_1l}=\displaystyle\sum_{i_2=1}^d\dots \sum_{i_p=1}^d E_{i_1\dots i_p}^2\Pi_{l}(\bx_{2})_{i_2}^2\dots \Pi_{<l}(\bx_p)_{i_p}^2.$$ 
In light of Lemma 7 of \cite{nguyen2015tensor}, $\norm*{\bT_l}=f_l(\bZ)$ for a standard Gaussian vector $\bZ$ with
	$$\norm*{f_l}_{Lip}^2\le\max_{i_1}\sum_{i_2\dots i_p} E_{i_1\dots i_p}^2\Pi_{l}(\bx_{2})_{i_2}^2\dots \Pi_{<l}(\bx_p)_{i_p}^2.$$
	Note that $\norm*{f_l}_{Lip}$ depends on $\bx_j$s. Moreover
	$$\EE\norm*{\bT_l}^2=\dsum_{i_1\dots i_p} E_{i_1\dots i_p}^2\Pi_{l}(\bx_{2})_{i_2}^2\dots \Pi_{<l}(\bx_p)_{i_p}^2.$$ By Talagrand's concentration inequality for Lipschitz functions,
	$$\PP(\norm*{\bT_l}\ge \sqrt{\EE\norm*{\bT_l}^2}+t\norm*{f_l}_{Lip})\le \exp(-t^2/2).$$
	By Lemma 4 of \cite{latala2005some}, $$|N_{=l}|<|N_{<l}|<\exp(C2^L(1+L-l)).$$ An application of the union bound yields
	$$\PP\left(\underset{\bx_2,\dots,\bx_p\in N_L}{\cup}\norm*{\bT}_l\ge \sqrt{\EE\norm*{\bT}_l^2(\bx)}+C(\sqrt{(p-1)2^l(1+L-l)}+t)\norm*{f_l}_{Lip}(\bx)\right)\le \exp(-t^2/2).$$
	Summing over $l$ for each fixed $\bx_j,$ and by union bound over $l,$
	\begin{equation}\label{eq:norm_union_bd}
	\PP\left(\underset{\bx_2,\dots,\bx_p\in N_L}{\cup}\sum_{l=1}^L\norm*{\bT}_l\ge \sum_{l=1}^L\sqrt{\EE\norm*{\bT}_l^2(\bx)}+C\sum_{l=1}^L(\sqrt{(p-1)2^l(1+L-l)}+t)\norm*{f_l}_{Lip}(\bx)\right)\le Le^{-t^2/2}.
	\end{equation}
	We bound the ``sum of expectations'' term as
	$$\begin{aligned}\left(\sum_{l=1}^L\sqrt{\EE\norm*{\bT_l}^2(\bx)}\right)^2
	&\le L\sum_{l=1}^L\EE\norm*{\bT_l}^2\\
	&\le L\sum_{l=1}^L\sum_{i_2=1}^d\Pi_l(\bx_2)_{i_2}^2\sum_{i_3\dots i_p}\prod_{j=3}^p\Pi_{<l}(\bx_j)^2_{i_j}\left(\sum_{i_1=1}^d E_{i_1\dots i_p}^2\right)
	\\&\le L\sum_{l=1}^L\sum_{i_2=1}^d\Pi_l(\bx_2)_{i_2}^2\underset{i_3,\dots,i_p}{\max}\sum_{i_1=1}^d E^2_{i_1\dots i_p}
	\\&\le L\underset{i_2,\dots,i_p}{\max}\sum_{i_1=1}^d E^2_{i_1\dots i_p}\sum_{i_2=1}^d\sum_{l=1}^L\Pi_l(\bx_2)_{i_2}^2
	\\&\le L\underset{i_2,\dots,i_p}{\max}\sum_{i_1=1}^d E^2_{i_1\dots i_p}\sum_{i_2=1}^d\bx_{2i_2}^2\le L\underset{i_2,\dots,i_p}{\max}\sum_{i_1=1}^d E^2_{i_1\dots i_p}.
	\end{aligned}$$ 
	In the above we have used the facts that $\norm*{\Pi_{<l}(\bx_j)}\le 1$ and $\bx_2=\sum\Pi_{l}(\bx_2).$ For the other term, since $\norm*{\Pi_{<l}(\bx_j)}\le 1,$  we have
	$$\begin{aligned}
		2^l\norm*{f_l}^2_{Lip}(\bx)
		=&2^l\max_{i_2}\Pi_l(\bx_2)_{i_2}^2\max_{i_1,i_3,\dots,i_p}\sum_{i_2=1}^d E_{i_1\dots i_p}^2\mathbbm{1}(\Pi_l(\bx_2)_{i_2}\neq 0)
		\\=&2^l\times 2^{-l}\times \max_{i_1,i_3,\dots,i_p}\sum_{i_2=1}^d( E_{i_1\dots i_p}^2-\sigma^2_{i_1\dots i_p}+\sigma^2_{i_1\dots i_p})\mathbbm{1}(\Pi_l(\bx_2)_{i_2}\neq 0)
		\\\le & \max_{i_1,i_3,\dots,i_p}\sum_{i_2=1}^d( E_{i_1\dots i_p}^2-\sigma^2_{i_1\dots i_p})\mathbbm{1}(\Pi_l(\bx_2)_{i_2}\neq 0)+\max_{i_1,\dots,i_p} \sigma^2_{i_1\dots i_p} \sum_{i_2=1}^d\mathbbm{1}(\Pi_l(\bx_2)_{i_2}\neq 0)
		\\\le & \max_{i_1,i_3,\dots,i_p}\sum_{i_2=1}^d( E_{i_1\dots i_p}^2-\sigma^2_{i_1\dots i_p})+2^{l}\max_{i_1,\dots,i_p}\sigma^2_{i_1\dots i_p}
	\end{aligned}$$
	and thus 
	$$
	2^{l/2}\norm*{f_l}_{Lip}\le \underset{i_1,i_3,\dots,i_p}{\max}\sqrt{\abs*{\dsum_{i_2=1}^d( E_{i_1\dots i_p}^2-\sigma^2_{i_1\dots i_p})}}+2^{l/2}\underset{i_1,\dots,i_p}{\max}\sigma^2_{i_1\dots i_p}.
	$$ 
	Now the deviation term in \eqref{eq:norm_union_bd} can be bounded as
	\begin{eqnarray*}
	&&\sum_{l=1}^L\sqrt{2^l(1+L-l)}\norm*{f_l}_{Lip}(\bx)\\
         &\le& \left(L^{3/2}\max_{i_1,i_3,\dots,i_p}\sqrt{\abs*{\sum_{i_2=1}^d( E_{i_1\dots i_p}^2-\sigma^2_{i_1\dots i_p})}}+\max_{i_1,\dots,i_p}\sigma^2_{i_1\dots i_p}\max_l\sqrt{1+L-l}\sum_l2^{l/2}\right)\\
         &\le& \left(L^{3/2}\max_{i_1,i_3,\dots,i_p}\sqrt{\abs*{\sum_{i_2=1}^d( E_{i_1\dots i_p}^2-\sigma^2_{i_1\dots i_p})}}+\sqrt{Ld}\max_{i_1,\dots,i_p}\sigma^2_{i_1\dots i_p}\right).
	\end{eqnarray*}
	Similarly,
	$$\dsum_{l=1}^Lt\norm*{f}_{Lip}(\bx)\le t\sqrt{L}\underset{i_1,i_3,\dots,i_p}{\max}\sqrt{\dsum_{i_2=1}^d E_{i_1\dots i_p}^2}\sqrt{\sum_l2^{-l}}.$$
	Now taking supremum over all $\bx_2,\dots,\bx_p\in N_L$ in equation \eqref{eq:norm_union_bd},
	\begin{eqnarray*}\underset{\bx_2,\dots,\bx_p\in N_L}{\sup}\left(\sum_{l=1}^L\norm*{\bT_l}\right)^2\ge L\max_{i_2,\dots,i_p}\sum_{i_1=1}^d E_{i_1\dots i_p}^2+C_pLd(1+\max_{i_1,\dots,i_p}\sigma^2_{i_1\dots i_p})\\
	+C_p(L^3+t^2L)\max_{i_1,i_3,\dots,i_p}\sum_{i_2=1}^d( E_{i_1\dots i_p}^2-\sigma^2_{i_1\dots i_p})\end{eqnarray*}
	with probability at most $\exp(-t^2).$ Taking expectation over the Gaussians, one obtains
	\begin{eqnarray*}\EE\left(\underset{\bx_2,\dots,\bx_p\in N_L}{\sup}\left(\sum_{l=1}^L\norm*{\bT_l}\right)^q\right)\le p^{q-1} C_{p}(Ld)^{q\over 2}(1+\max_{i_1,\dots,i_p} \sigma^2_{i_1\dots i_p})^{q\over 2}\\+p^{q-1}C_{p}L^{q\over 2}\left(\max_{i_1,i_3,\dots,i_p}\sum_{i_2=1}^d E_{i_1\dots i_p}^2-\sigma^2_{i_1\dots i_p}\right)^{q\over 2}.\end{eqnarray*}
	Summing over all terms in \eqref{eq:sumterms}, we have 
	\begin{eqnarray*}
	(\EE\norm*{\mathscr{H}}^q)^{1/q}&\le&C_p\sqrt{Ld}(1+\max_{i_1,\dots,i_p} \sigma^2_{i_1\dots i_p})^{1\over 2}\\&&+p^{(q-1)/q}C_{p}L^{\tfrac{3}{2}}\left(\sum_{i_k=1}^p\left(\max_{i_l\in[d],l\neq k}\sum_{i_k=1}^d E_{i_1\dots i_p}^2-\sigma^2_{i_1\dots i_p}\right)^{q/2}\right)^{1/q}.
	\end{eqnarray*}
	Plugging in this bound in \eqref{eq:sumterms} proves \eqref{eq:norm_main} since $L=C\log d+c_0$. Now taking expectation in \eqref{eq:symm_cond} finishes the proof.
\end{proof}

\subsection{Norm of Random Tensors}
We are now in a position to prove Theorems \ref{th:moment4} and \ref{th:moment2}.  Without loss of generality, we take $\sigma=1$.
\begin{proof}[Proof of Theorem \ref{th:moment4}] We begin with the upper bound.

	\noindent\textbf{Upper bound.} For any $t>d,$ and $\EE\abs*{E}^\alpha=\kappa<\infty$,
	$$\PP\left(\underset{i_1,i_3,\dots,i_p}{\max}\sum_{i_2}\left(E^2_{i_1\dots i_p}-1\right)>t\right)\le d^{p-1}\cdot\dfrac{\EE\abs*{\sum_{i_2}(E^2_{i_1\dots i_p}-1)}^{\alpha/2}}{t^{\alpha/2}}\le\dfrac{C_pd^{p-1+\frac{\alpha}{4}}\kappa}{t^{\alpha/2}}$$
	by Khintchine and Rosenthal inequalities respectively. This means
	$$\EE\underset{i_1,i_3,\dots,i_p}{\max}\dsum_{i_2=1}^dE^2_{i_1\dots i_p}\le d+C_pd^{\tfrac{2(p-1)}{\alpha}+\tfrac{1}{2}}.$$
	By Theorem \ref{th:norm_upper} with $q=2,$ $$\EE\norm*{\mathscr{E}}\le C\sqrt{d\log d}+ C_pd^{\tfrac{p-1}{\alpha}+\tfrac{1}{4}}(\log d)^{3/2}.$$
	Notice that we can get a constant $C>0$ such that
	$$
	\PP\left(\max \abs*{E_{i_1\dots i_p}}>Cd^{\tfrac{p-1}{\alpha}+\tfrac{1}{4}}\right)\le d^p\EE\abs*{E}^\alpha /C^\alpha d^{p-1+\tfrac{\alpha }{4}}=d^{1-\tfrac{\alpha}{4}}.
	$$
	
	It is well known that the function
	$f:\RR^{d^p}\to\RR$ given by $f(\mathrm{vec}(\mathscr{E}))=\|\mathscr{E}\|$ is convex and $1$-Lipschitz. Now using Talagrand's concentration inequality for convex Lipschitz functions \citep[see, e.g., Equation 1.4 of][]{ledoux2013probability} we obtain
	$$\PP\left(\abs*{\|\mathscr{E}\|-\EE\|\mathscr{E}\|}>C_pd^{\tfrac{p-1}{\alpha}+\tfrac{1}{4}}(\log d)^{3/2} \right)\le d^{1-\tfrac{\alpha}{4}} 
	$$ and thus the upper bound now follows.

Now consider the lower bound.

\noindent\textbf{Lower bound.}  It is clear that
$$\norm*{\mathscr{E}}^2\ge \underset{i_2,\dots,i_p}{\max}\displaystyle\sum_{i_1=1}^dE_{i_1\dots i_p}^2.$$
Thus, for any constant $C>0$,
$$
\PP\left(\norm*{\mathscr{E}}^2>d+C^2d^{\tfrac{2(p-1)}{\alpha}+\tfrac{1}{2}}\,\,\mathrm{i.o.}\right)
\ge \PP\left(\underset{i_2,\dots,i_p}{\max}\displaystyle\sum_{i_1=1}^dE_{i_1\dots i_p}^2-d>C^4d^{\tfrac{2(p-1)}{\alpha}+\tfrac{1}{2}}\,\,\mathrm{i.o.}\right)
$$
Notice that $\dsum_{i_1=1}^d(E^2_{i_1\dots i_p}-1)$ is a sum of independent mean zero random variables. Since $\EE\abs*{E_{i_1\dots i_p}}^{\alpha/2}=\infty,$  Corollary 2 of \cite{latala1997estimation} along with Khintchine inequalities imply that for any finite $d$, the random variables $$X_{i_2\dots i_p}=\dsum_{i_1=1}^{d}(E_{i_1i_2\dots i_p}^2-1)/\sqrt{d}$$ satisfy
$$\EE |X_{i_2\dots i_p}|^{\alpha/2}\asymp \max\{1,\,d^{1-\alpha/4}(\EE|E_{i_1i_2\dots i_p}^2-1|)^{\alpha/2}\}=\infty.$$
For $k=1,2,\dots,$ let $$B_k=\{i_2,\dots,i_p:2^{k-1}< i_2\le 2^k, 1\le i_3,\dots,i_p\le 2^k\}.$$
By Borel Cantelli theorem, it is enough to show that
$$\sum_k \PP\left(\text{there exists }i_2,\dots,i_p\in B_k\text{ s.t. }\abs*{X_{i_2\dots i_p}}\ge C\cdot 2^{2k(p-1)/\alpha}\right)=\infty.$$
In other words, we need
$$\dsum_k\left[1-\PP\left(\abs*{X_{i_2\dots i_p}}< C\cdot 2^{2k(p-1)/\alpha}\right)^{2^{k(p-1)}/2}\right]=\infty.$$ 
Again since $\EE\abs*{X_{i_2\dots i_p}}^{\alpha/2}=\infty,$ we have $\PP(|X_{i_1\dots i_p}|>t)\gtrsim t^{-\alpha/2}$ for large enough $t$, and hence
$$\sum_{k=1}^\infty 2^{k(p-1)}\PP\left( \abs*{X_{i_2\dots i_p}}\ge C\cdot 2^{2k(p-1)/\alpha}\right)=\infty.$$ 
We use the well known implication
\begin{equation}\label{eq:bclemma_ineq}
	\sum_k\left[1-(1-a_k)^{b_k}\right]<\infty\implies \sum_k a_kb_k<\infty\quad\text{ for }a_k\in[0,1],\,\,b_k\ge 0.
\end{equation}
Notice that \eqref{eq:bclemma_ineq} implies, for $d=2^k$ and any constant $C>0,$ tensors $\mathscr{E}$ of dimension $d\times d\times\dots\times d$ satisfies
$$\PP\left((\norm*{\mathscr{E}}^2-d)/\sqrt{d}\ge C\cdot d^{2(p-1)/\alpha}\text{ for infinitely many }d\right)=1.$$
The proof is now completed.
\end{proof}

\vspace{20pt}

The proof of Theorem \ref{th:moment2} follows a similar strategy.

\begin{proof}[Proof of Theorem \ref{th:moment2}] 
	
	\noindent\textbf{Upper bound.} Recall that  $\EE\abs*{E}^{\alpha}=\kappa<\infty.$ 
	By Markov inequality, $$\PP\left(\underset{i_1,i_3,\dots,i_p}{\max}\dsum_{i_2=1}^dE^2_{i_1\dots i_p}>t\right)\le d^{p-1}\cdot \EE\abs*{\dsum_{i_2=1}^dE^2_{i_1\dots i_p}}^{\alpha/2}/t^{\alpha/2}.$$
	By Khintchine's inequality, for independent Rademacher random variables $R_{i_2},$
	\begin{eqnarray*}
	\EE\abs*{\sum_{i_2=1}^dE^2_{i_1\dots i_p}}^{\tfrac{\alpha}{2}}&\le& C\cdot \EE\abs*{\sum_{i_2=1}^dR_{i_2}E_{i_1\dots i_p}}^{\alpha}\\
	&\le& C\cdot\max\left\{\left(\EE\abs*{\sum_{i_2=1}^dR_{i_2}E_{i_1\dots i_p}}^{2}\right)^{\tfrac{\alpha}{2}},d\EE\abs*{E}^{\tfrac{\alpha}{2}}\right\}\\
	&=&Cd^{\tfrac{\alpha}{2}},
\end{eqnarray*}
	where the second step is by Rosenthal's inequality. Consequently
	$$\PP\left(\underset{i_1,i_3,\dots,i_p}{\max}\dsum_{i_2=1}^dE^2_{i_1\dots i_p}>t\right)\le C\cdot d^{p-1+\tfrac{\alpha}{2}}/t^{\tfrac{\alpha}{2}},$$
	and so
	$$\EE\underset{i_1,i_3,\dots,i_p}{\max}\dsum_{i_2=1}^dE^2_{i_1\dots i_p}\le C_pd^{\tfrac{2(p-1)}{\alpha}+1}.$$
	Similar to before, by Theorem \ref{th:norm_upper} with $q=2,$ $$\EE\norm*{\mathscr{E}}\le C\sqrt{d\log d}+ C_pd^{\tfrac{p-1}{\alpha}+\tfrac{1}{2}}(\log d)^{3/2}.$$ Moreover, we can get a constant $C>0$ such that
	$$
	\PP\left(\max \abs*{E_{i_1\dots i_p}}>Cd^{\tfrac{p-1}{\alpha}+\tfrac{1}{2}}\right)\le d^p\EE\abs*{E}^\alpha /C^{\alpha}d^{p-1+\tfrac{\alpha }{2}}=d^{1-\tfrac{\alpha}{2}}.
	$$
	Again, using Talagrand's concentration inequality for convex Lipschitz functions we obtain
	$$\PP\left(\abs*{\|\mathscr{E}\|-\EE\|\mathscr{E}\|}>C_pd^{\tfrac{p-1}{\alpha}+\tfrac{1}{2}}(\log d)^{3/2} \right)\le d^{1-\tfrac{\alpha}{2}},
	$$ and the upper bound now follows.
	
	\bigskip
		
	\noindent\textbf{Lower bound.} We will show that for any constant $C>0,$  $$\PP\left(\norm*{\mathscr{E}}^2>Cd^{2p/\alpha}\text{ i.o.}\right)=1.$$	Clearly,
	$$\norm*{\mathscr{E}}\ge \underset{i_1,\dots,i_p}{\max}\abs*{E_{i_1\dots i_p}}.$$ For $k=1,2,\dots,$ let $$A_k=\{i_1,\dots,i_p:2^{k-1}< i_1\le 2^k, 1\le i_2,\dots,i_p\le 2^k\}.$$ By Borel Cantelli theorem, it is enough to show that $$\displaystyle\sum_k\PP\left(\text{there exists }i_1,\dots,i_p\in A_k\text{ s.t. }\abs*{E_{i_1\dots i_p}}\ge C\cdot2^{kp/\alpha}\right)=\infty.$$
	As before, we need
	$$\displaystyle\sum_k \left[1-\PP\left(\abs*{E_{i_1\dots i_p}}< C\cdot 2^{kp/\alpha}\right)^{2^{kp}/2}\right]=\infty.$$ Notice now that 
	$$\EE\abs*{E_{i_1\dots i_p}}^{\alpha}\le C^{\alpha}+\displaystyle\sum_k\EE(\abs*{E_{i_1\dots i_p}}^{\alpha}\mathbbm{1}_{C\cdot 2^{kp/\alpha}\le \abs*{E_{i_1\dots i_p}}\le C\cdot 2^{(k+1)p/\alpha}}).$$ Thus,
	$$\sum_{k=1}^{\infty} 2^{kp}\PP(C\cdot 2^{kp/\alpha}\le \abs*{E_{i_1\dots i_p}}\le C\cdot 2^{(k+1)p/\alpha})=\infty$$
	since $\EE\abs*{E_{i_1\dots i_p}}^{\alpha}=\infty.$ 
	The conclusion now follows from equation \eqref{eq:bclemma_ineq}.
\end{proof}

\subsection{Bounds for Tensor SVD}
We now turn our attention to bounds for the tensor SVD and prove Theorems \ref{pr:svd} and \ref{th:lowersvd}. The cases when $\alpha>4$ and $2<\alpha<4$ can be treated in an identical fashion and we shall focus on the case when $\alpha>4$ for brevity.
 
\begin{proof}[Proof of Theorem \ref{pr:svd}] 

Note that
	\begin{eqnarray}
		\hat{\lambda}&:=&\scrX\times_1\hat{\bu}_1^{\rm SVD}\dots\times_p\hat{\bu}_p^{\rm SVD}\nonumber\\
		&\ge& \scrX\times_1\bu_1\dots\times_p\bu_p\nonumber\\
		&=&\lambda+\mathscr{E}\times_1\bu_1\dots\times\bu_p\nonumber\\
		&\ge& \lambda-\sqrt{d}  \label{eq:svlow}
	\end{eqnarray}
	with probability at least $1-d^{-1}$. 
	
	Write $$\hat{\bu}^{\mathrm{SVD}}_j=\sqrt{1-\rho_j^2}\bu_j+\rho_j\bv_j$$ where $\|\bv_j\|=1$ and $\bv_j\perp \bu_j$, for $1\le j\le p$. Let $\rho:=\max_j|\rho_j|$. Using the upper bounds from Theorem \ref{th:moment4} for $k\ge 2$, we can derive that
	\begin{align*}
		\hat{\lambda}
		=&\scrX\times_1\hat{\bu}^{\mathrm{SVD}}_1\dots\times_p\hat{\bu}^{\mathrm{SVD}}_p	\\
		=&\lambda\prod_{j=1}^{p}\sqrt{1-\rho_j^2}+\sum_{A\subset [p]} \left(\prod_{j\in A} \sqrt{1-\rho_j^2}\right)\left(\prod_{j\notin A} \rho_j\right)\mathscr{E}\times_{j\in A}\bu_j\times_{j\notin A} \bv_j \\
		\le& \lambda\left(\prod_{j=1}^{p}\sqrt{1-\rho_j^2}\right)^{1/p}+
		\sum_{A\subset [p]} \left(\prod_{j\in A} \sqrt{1-\rho_j^2}\right)\left(\prod_{j\notin A} \rho_j\right)\norm*{\mathscr{E}\times_{j\in A}\bu_j}\\
		\le& (\lambda+\sqrt{d})(1-\sum_j\rho_j^2/2)+C_p\rho\sqrt{d}+C_p\sum_{k=2}^{p}\rho^k\left(d^{\tfrac{k-1}{\alpha}+\tfrac{1}{4}}+\sqrt{d}\right)(\log d)^{3/2}\\
		\le& (\lambda+\sqrt{d})(1-\rho^2/2)+C_p\rho\sqrt{d}+C_p\rho^2\left(d^{\tfrac{p-1}{\alpha}+\tfrac{1}{4}}+\sqrt{d}\right)(\log d)^{3/2} \\
		\le& \lambda + C_p\sqrt{d}+\left[C_p\left(d^{\tfrac{p-1}{\alpha}+\tfrac{1}{4}}+\sqrt{d}\right)(\log d)^{3/2}-\lambda/2\right]\rho^2
		 \numberthis\label{eq:svup}
	\end{align*}
with probability at least $1-d^{-1}-d^{1-\alpha/4}$. Note that
$$|\mathscr{E}\times_1\bu_1\dots\times_p\bu_p|\le \sqrt{d}\qquad {\rm and} \qquad \|\mathscr{E}\times_{k\neq j}\bu_k\|\le C\sqrt{d}$$ with probability at least $1-d^{-1}$, using Chebychev inequalities. We also use AM-GM inequality for the first term on the fourth line.
	
	We can get a sufficiently large constant $C_p>0$, such that if  $$\lambda>C_p\left(d^{\tfrac{p-1}{\alpha}+\tfrac{1}{4}}+\sqrt{d}\right)(\log d)^{3/2},$$ and $\rho^2>  \sqrt{d}/\lambda$ the last line of \eqref{eq:svup} is at most $\lambda-2\sqrt{d}$, thus contradicting \eqref{eq:svlow}. We thus have
	\begin{equation}\label{eq:rho1}		
		\rho^2\le \sqrt{d}/\lambda.
	\end{equation}
It is also clear from \eqref{eq:svup} that $\hat{\lambda}\le \lambda+C_p\sqrt{d}$, which combined with \eqref{eq:svlow} yields
$$
|\hat{\lambda}-\lambda|\le C_p\sqrt{d}.
$$
We will derive an improved upper bound on $\rho$ by using the first order condition on $\hat{\bu}_j^{\rm SVD}$. In particular, $(\hat{\bu}^{\rm SVD}_1,\dots,\hat{\bu}^{\rm SVD}_p)$ is a local minimum of the function
	$$
	F(\gamma,\ba_1,\dots,\ba_p)=\|\scrX-\gamma\ba_1\otimes\dots\otimes\ba_p\|_{\rm HS}^2
	$$
	for $\gamma\in \RR,\,\ba_j\in\SS^{d-1}$. Setting the derivative of the Lagrangian to zero, we have
	$$
	\scrX\times_{k\neq j}\hat{\bu}^{\rm SVD}_k=\hat{\lambda}\hat{\bu}^{\rm SVD}_j\quad\text{for }1\le j\le p.
	$$
	For $j=1$,
	\begin{align*}
		\|\lambda(\hat{\bu}^{\rm SVD}_1-\bu_1)\|=&\|(\lambda-\hat{\lambda})\hat{\bu}^{\rm SVD}_1+(\hat{\lambda}\hat{\bu}^{\rm SVD}_1-\lambda \bu_1)\| \\
		\le & |\hat{\lambda}-\lambda|+\|(\scrT+\mathscr{E})\times_{k\neq 1}\hat{\bu}^{\rm SVD}_k-\lambda \bu_1\| \\
		\le & C_p\sqrt{d}+\norm*{\lambda\big(\prod_{k\neq 1}\sqrt{1-\rho_k^2}-1\big)\bu_1}+\|\mathscr{E}\times_{k\neq 1}\hat{\bu}^{\rm SVD}_k\|. \numberthis\label{eq:focnd}
	\end{align*}
	Since $$\rho^2=\max_j\rho_j^2\le \sqrt{d}/\lambda$$ by \eqref{eq:rho1}, it is not hard to see that 
	$$
	\abs*{\prod_{k\neq 1}\sqrt{1-\rho_k^2}-1}\le 1-(1-\rho^2)^{(p-1)/2}\le C_p\sqrt{d}/\lambda.
	$$
	On the other hand, following \eqref{eq:svup}, we have
	\begin{align*}
		\|\mathscr{E}\times_{k\neq 1}\hat{\bu}^{\rm SVD}_k\|=&\norm*{\sum_{A\subset ([p]\setminus \{1\})} \left(\prod_{k\in A} \sqrt{1-\rho_j^2}\right)\left(\prod_{k\notin A\cup\{1\}} \rho_j\right)\mathscr{E}\times_{j\in A}\bu_j\times_{k\notin A\cup\{1\}} \bv_j} \\
		\le& \sum_{A\subset ([p]\setminus \{1\})} \left(\prod_{k\in A} \sqrt{1-\rho_j^2}\right)\left(\prod_{k\notin A\cup\{1\}} \rho_j\right)\norm*{\mathscr{E}\times_{j\in A}\bu_j} \\
		\le& C_p\sqrt{d}+C_p\rho\sqrt{d}+ C_p\sum_{k=2}^{p-1}\rho^k\left(d^{\tfrac{k-1}{\alpha}+\tfrac{1}{4}}+\sqrt{d}\right)(\log d)^{3/2} \\
		\le &C_p\sqrt{d}+C_p\rho^2\left(d^{\tfrac{p-1}{\alpha}+\tfrac{1}{4}}+\sqrt{d}\right)(\log d)^{3/2} \\
		\le &C_p\sqrt{d}+C_p\cdot\frac{\sqrt{d}}{\lambda}\cdot \left(d^{\tfrac{p-1}{\alpha}+\tfrac{1}{4}}+\sqrt{d}\right)(\log d)^{3/2} \\
		\le &C_p\sqrt{d},
	\end{align*}
with probability at least $1-d^{1-\alpha/4}$, once again using the upper bounds from Theorem \ref{th:moment4}. The last line uses the facts  $\rho^2\le\sqrt{d}/\lambda$ and $$\lambda> C\left(d^{\tfrac{p-1}{\alpha}+\tfrac{1}{4}}+\sqrt{d}\right)(\log d)^{3/2}$$ for a sufficiently large constant $C>0$. 

Plugging the last two bounds into \eqref{eq:focnd} above implies
$$
\sin\angle(\hat{\bu}^{\rm SVD}_1,\,\bu_1)\le \sqrt{2}\|\hat{\bu}^{\rm SVD}_1-\bu_1\|\le \frac{C_p\sqrt{d}}{\lambda}.
$$
The bounds for $j=2,\dots,p$ follow by an analogous argument.	
\end{proof}

\vspace{20pt}

\begin{proof}[Proof of Theorem \ref{th:lowersvd}] Consider for some constant $0<C_0<1$, a set of vectors 
$$
		\calS(C_0)=\{\bx_1,\dots,\bx_p:\bx_j=\sqrt{1-\rho_j^2}\bu_j+\rho_j\bv_j,\,\|\bv_j\|=1,\bv_j\perp\bu_j,|\rho_j|<C_0\}.
		$$ 
		By assumption, there exists a $\beta$ such that $\EE| E_{i_1\dots i_p}|^{\beta}<\infty$ and 
		$$
		\beta> \max\{(p-2)\alpha/(p-1),4\} 
		$$ and all $i_1\dots i_p\in [d].$
		Following the steps of \eqref{eq:svup}, for any  $\bx_1,\dots,\bx_p\in \calS(C_0)$,
		\begin{align*} &\,\mathscr{E}\times_1\bx_1\dots\times_p\bx_p \\
			=&\sum_{A\subset [p]} \left(\prod_{j\in A} \sqrt{1-\rho_j^2}\right)\left(\prod_{j\notin A} \rho_j\right)\mathscr{E}\times_{j\in A}\bu_j\times_{j\notin A} \bv_j \\
			\le &\sum_{k=0}^p{{p}\choose{k}}\max_{|A|=p-k}\prod_{j\notin A}\rho_j\norm*{\mathscr{E}\times_{j\in A}\bu_j}\\			
			\le & \abs*{\mathscr{E}\times_1\bu_1\dots\bu_p}+C_p\sum_{k=1}^{p-1}{{p}\choose{k}}C_0^k(\sqrt{d}+d^{\tfrac{k-1}{\beta}+\tfrac{1}{4}})(\log d)^{3/2}+C_0^p\norm*{\mathscr{E}}\\
			\le & C_p\sqrt{d}+C_pd^{\tfrac{p-2}{\beta}+\tfrac{1}{4}}(\log d)^{3/2}+C_0^p\norm*{\mathscr{E}}\numberthis\label{eq:max_err}
		\end{align*}
		with probability at least $1-d^{1-\beta/4}$. Once again, in the third inequality above, we have used the upper bounds from Theorem \ref{th:moment4} for $k\ge 2$. Notice that 
		$$|\mathscr{E}\times_1\bu_1\dots\times_p\bu_p|\le \sqrt{d}\qquad {\rm and}\qquad \|\mathscr{E}\times_{k\neq j}\bu_k\|\le C\sqrt{d}$$ with probability at least $1-d^{-1}$, using Chebychev inequalities. On the other hand, $\EE\abs*{E}^\alpha=\infty$ for some $\alpha<4(p-1)$. Then  by the lower bounds in Theorems  \ref{th:moment4}, for any constant $C>0,$ $$\norm*{\mathscr{E}}>Cd^{\tfrac{p-1}{\alpha}+\tfrac{1}{4}}$$ almost surely. We then have
		\begin{align*}
			&\sup_{\bx_j\in \calS(C_0)}\scrX\times_1\bx_1\dots\times_p\bx_p \\
			\le & \sup_{\bx_j\in \calS(C_0)}\scrT\times_1\bx_1\dots\times_p\bx_p
			+\sup_{\bx_j\in \calS(C_0)}\scrE\times_1\bx_1\dots\times_p\bx_p \\
			\le &\lambda +C_p\sqrt{d}+C_pd^{\tfrac{p-2}{\beta}+\tfrac{1}{4}}(\log d)^{3/2}+C_0^p\norm*{\mathscr{E}}\\
			\le &\lambda+C_pd^{\tfrac{p-1}{\alpha}+\tfrac{1}{4}}+C_0^p\|\scrE\| \\
			\le &C_pd^{\tfrac{p-1}{\alpha}+\tfrac{1}{4}}+C_0^p\|\scrE\|-2\lambda \\
			\le &\|\scrE\|-2\lambda, \numberthis\label{eq:inn_prod_upper2}
		\end{align*}
		since $\beta>(p-2)\alpha/(p-1)$ and  $\lambda<Cd^{\tfrac{p-1}{\alpha}+\tfrac{1}{4}}$. Again,
		\begin{align*}
			&\scrX\times_1\hat{\bu}_1^{\rm SVD}\times_2\hat{\bu}_2^{\rm SVD}\times_3\dots\times_p\hat{\bu}_p^{\rm SVD} \\
			=& \sup_{\bx_1,\dots,\bx_p\in \SS^{d-1}}\scrX\times_1\bx_1\dots\times_p\bx_p\ge\norm*{\mathscr{E}}-\lambda,
		\end{align*}
		which when compared to \eqref{eq:inn_prod_upper2} shows that the global maximizer $(\hat{\bu}_1^{\rm SVD},\dots,\hat{\bu}_p^{\rm SVD})\notin \calS(C_0)$. In particular, $\|\hat{\bu}_j^{\rm SVD}-\bu_j\|>C_0$ with probability at least $1-d^{1-\beta/4}$, for any $C_0<1.$
		
		The same proof goes through for the case $\alpha>4(p-1)$ provided there is a small enough constant $C$ such that $\lambda< C\sqrt{d}$. Similarly, the case where $2<\alpha<4$	can be proved through the upper and lower bounds from Theorem \ref{th:moment2}.
	\end{proof}

\subsection{Bounds for Spectral Initialization and Power Iteration}
We now consider polynomial time computable estimates when $\alpha\ge 4$ by establishing bounds for spectral initialization and power iterations.

\begin{proof}[Proof of Proposition \ref{pr:matricize4th}.] 
	Notice that ${\sf Mat}_k(\scrE)$ is a $d\times d^{p-1}$ matrix of i.i.d.  random variables with mean $0$ and variance $1$. Also,
	$$
	{\sf Mat}_k(\scrT){\sf Mat}_k(\scrE)^\top
	=\lambda \bu_k(\bu_1\odot\dots\odot \bu_{k-1}\odot\bu_{k+1}\odot\dots\odot\bu_p)^\top {\sf Mat}_k(\scrE)^\top
	=\lambda \bu_k(\bE')^\top,
	$$
	where $\bE'$ is a $d$ length vector with independent random variables $\EE\bE'_i=0,$ $\EE(\bE'_i)^2=1$ and $\EE(\bE'_i)^4=\kappa<\infty.$ Then
	$$
	\PP\left(\norm*{\bu_k(\bE')^\top}>2C\sqrt{d}\right)= \PP\left((\sum E_i^{\prime 2}-1)^2>4Cd^2\right)\le Cd\EE((E'_i)^4)/d^2\le d^{-1}.
	$$
	 
	By Bai-Yin's law, $\lambda_{\max}({\sf Mat}_k(\mathscr{E}))=d^{(p-1)/2}+\sqrt{d}+o(\sqrt{d})$, meaning
	$$
	\norm*{{\sf Mat}_k(\scrE){\sf Mat}_k(\scrE)^\top-d^{p-1}I_{d}}=\lambda_{\max}({\sf Mat}_k(\mathscr{E}))^2-d^{p-1}\le Cd^{p/2}
	$$
	almost surely. See, e.g., Theorem 2 of \cite{bai2008limit} and Theorem 5.31 of \cite{vershynin2010introduction}.
	 
	 Now using Davis-Kahan theorem,
	$$
	\begin{aligned}
	\sin\angle\left(\hat{\bu}_k^{\sf Mat},\,\bu_k\right)
	\le & \dfrac{2\norm*{{\sf Mat}_k(\scrE){\sf Mat}_k(\scrE)^\top-d^{p-1}I_{d}}+4\lambda\norm*{{\sf Mat}_k(\scrT){\sf Mat}_k(\scrE)^\top}}{\lambda^2}
	\\\le & C\cdot\dfrac{d^{p/2}+\lambda\sqrt{d}}{\lambda^2}.
	\end{aligned}
	$$
	with  probability at least $1-d^{-1}$. The proof for other modes follows similarly.
\end{proof}

\vspace{20pt}

\begin{proof}[Proof of Proposition \ref{pr:iter4th}] The proof is by induction on $t$. The basis step holds by some nontrivial initialization, for example through the matricization estimator of Proposition \ref{pr:matricize4th}. We now assume that the induction hypothesis holds for some $t> 0$ and prove the induction step for $t+1$.
	
	As before, we write $$\bx^{[t]}_j=\sqrt{1-\rho_j^2} \bu_j + \rho_j \bv_j$$ where $\bv_j$ is a unit length vector perpendicular to $\bu_j$. Then
	\begin{eqnarray}
		\scrX\times_{j\neq k} \bx^{[t]}_j&=&\lambda \left(\prod_{j\neq k} \sqrt{1-\rho_j^2}\right) \bu_k \nonumber\\
		&&+\sum_{A\subset ([p]\setminus \{k\})} \left(\prod_{j\in A} \sqrt{1-\rho_j^2}\right)\left(\prod_{j\notin A\cup\{k\}} \rho_j\right)\mathscr{E}\times_{j\in A}\bu_j\times_{j\notin A\cup\{k\}} \bv_j.\label{eq:rhoexp}
	\end{eqnarray}
	Notice that the entries of $\bE'=\mathscr{E}\times_{j\neq k}\bu_j$ are i.i.d. copies of a random variable $E''$ with $\EE(E'')=0$, $\Var(E'')=1$ and $\EE\abs*{E''}^{4}=\kappa<\infty$. By Chebyshev's inequality, for any $1\le k\le p$,
	$$
	\PP\left(\|\mathscr{E}\times_{j\neq k}\bu_j\|>C\sqrt{d}\right)= \PP\left((\sum E_i^{\prime 2}-1)^2>4Cd^2\right)\le d\,\EE((E'_i)^4)/Cd^2\le d^{-1}.
	$$
	Notice also that 
	\begin{align*}
	&\sum_{A\subset ([p]\setminus \{k\}),\,|A|\le p-2} \left(\prod_{j\in A} \sqrt{1-\rho_j^2}\right)\left(\prod_{j\notin A\cup\{k\}} \rho_j\right)\mathscr{E}\times_{j\in A}\bu_j\times_{j\notin A\cup\{k\}} \bv_j\\
	&\le \sum_{l=2}^{p-1}{{p}\choose{l}}(\rho^{[t]})^l\|\mathscr{E}\|\le C_p(\rho^{[t]})^2\|\mathscr{E}\|.
	\end{align*}
The last two inequalities together imply that
\begin{align*}
	&\norm*{\mathscr{E}\times_{j\neq k}\bx_j^{[t]}}\\
	=&\norm*{\sum_{A\subset ([p]\setminus \{k\})} \left(\prod_{j\in A} \sqrt{1-\rho_j^2}\right)\left(\prod_{j\notin A\cup\{k\}} \rho_j\right)\mathscr{E}\times_{j\in A}\bu_j\times_{j\notin A\cup\{k\}} \bv_j} \\
	\le &C\sqrt{d}+C_p(\rho^{[t]})^2\|\mathscr{E}\|. \numberthis\label{eq:enorm}
\end{align*}
By the nontrivial initialization and the induction hypothesis, we have a constant $\rho^*<1$ such that $\rho^{[t]}\le \rho_*<1$. We then have
\begin{align*}
	\sin\angle(\bx_k^{[t]},\,\bu_k)=&\underset{\|\bw\|=1,\bw\perp \bu_k}{\sup}{{\scrX\times_{j\neq k}\bx_j^{[t]}\times_k\bw}\over{\|\scrX\times_{j\neq k}\bx_j^{[t]}\|}} \\
	=&\underset{\|\bw\|=1,\bw\perp \bu_k}{\sup}{{\mathscr{E}\times_{j\neq k}\bx_j^{[t]}\times_k\bw}\over{\|\scrX\times_{j\neq k}\bx_j^{[t]}\|}} \\
	\le & {\norm*{\mathscr{E}\times_{j\neq k}\bx_j^{[t]}}\over{\lambda \left(\prod_{j\neq k} \sqrt{1-\rho_j^2}\right) -\norm*{\mathscr{E}\times_{j\neq k}\bx_j^{[t]}}}}	\\
	\le & \dfrac{C\sqrt{d}+C_p(\rho^{[t]})^2\|\mathscr{E}\|}{\lambda(1-\rho_*^2))^{(p-1)/2}-C\sqrt{d}-C_p(\rho^{[t]})^2\|\mathscr{E}\|} \\
	\le & C\dfrac{\sqrt{d}}{\lambda}+C_p(\rho^{[t]})^2\dfrac{\|\mathscr{E}\|}{\lambda}
\end{align*}
with probability at least $1-d^{1-\alpha/4}$.

We use \eqref{eq:rhoexp} and \eqref{eq:enorm} for the first and second inequalities respectively. The last line follows if $\lambda >C\|\mathscr{E}\|$ for a sufficiently large constant $C>0$. Since we have $\lambda>d^{p/4}$ and $\alpha>4$, this condition is satisfied with probability at least $1-d^{1-\alpha/4}$, using the upper bounds from Theorem \ref{th:moment4}.
\end{proof}

\subsection{Bounds for Robust Tensor SVD}

\begin{proof}[Proof of Theorem \ref{th:robust1}.] 
	Let us fix $k=1$ as the other modes follow by symmetry.	
%

We will denote the partition of $[d]^{p-1}$ into $n$ groups as $I:=(I_1,\dots,I_{n}).$ Let us also define 
$$
\sigma^2=\norm*{\dfrac{1}{n}\sum_{j=1}^n\EE(\bS_j^2|I)}.
$$
Then conditional on $I$, we apply Theorem 3.2 of \cite{minsker2018sub} with $$\theta=\dfrac{\sqrt{2\log(2d/\delta)/n}}{\sigma}$$ to obtain
\begin{equation}\label{eq:one_step_bound}
\PP\left(\norm*{\hat{\bV}_1-\dfrac{1}{n}\sum_{j=1}^n\EE\left(\bS_j|I\right)}> \sigma \sqrt{\dfrac{2\log(2d/\delta)}{n}}\bigg| I\right)\le \delta.
\end{equation}

\bigskip

\noindent We need to calculate $\EE\left(\bS_j\bigg|I\right)$ and  $\sigma^2.$ We have the following lemma, the proof of which can be found in section \ref{subsec:lems}. \begin{lemma}\label{lem:mat_means} 
	$\EE(\bS_j|I)=\left(\dsum_{i_{-1}\in I_j}\bw_{i_{-1}}^2\right)\bV_1$ and
	\begin{equation}\label{eq:sgbd1}
		\sigma^2=\norm*{\frac{1}{n}\sum_{j=1}^n\EE(\bS_j^2|I)}\le \lambda^4\left(\dfrac{1}{n}\sum_{j=1}^n\left(\sum_{i_{-1}\in I_j}\bw^2_{i_{-1}}\right)^2\right)+\frac{\lambda^2d}{n}\sum_{j=1}^n\sum_{i_{-1}\in I_j}\bw_{i_{-1}}^2+\frac{d}{n}\sum_{j=1}^n|I_j|.
	\end{equation}
\end{lemma}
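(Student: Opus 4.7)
The plan is to verify the two claims in sequence. For the mean, write the mode-$1$ fiber as $\bX_{i_{-1}} = \lambda w_{i_{-1}} \bu_1 + \bE_{i_{-1}}$, where $\bE_{i_{-1}}$ is the corresponding fiber of $\scrE$. Expanding $\bX_{i_{-1}} \bX_{i_{-1}}^\top$ and using $\EE \bE_{i_{-1}} = 0$ together with $\EE \bE_{i_{-1}} \bE_{i_{-1}}^\top = I$, the two cross terms vanish and we obtain $\EE \bX_{i_{-1}} \bX_{i_{-1}}^\top = \lambda^2 w_{i_{-1}}^2 \bu_1 \bu_1^\top + I$. Subtracting the diagonal then removes the $I$ entirely, leaving $\EE \bY_{i_{-1}} = \lambda^2 w_{i_{-1}}^2 \widetilde{\bu_1 \bu_1^\top}$, where $\widetilde{\bM}$ denotes $\bM$ with its diagonal zeroed. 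Since the partition $I$ is independent of $\scrE$, summing over $i_{-1} \in I_j$ yields the first claim.

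For the second claim, distinct fibers $\bE_{i_{-1}}, \bE_{i'_{-1}}$ involve disjoint entries of $\scrE$ and are therefore independent given $I$. Expanding $\bS_j^2 = \sum_{i_{-1}, i'_{-1}} \bY_{i_{-1}} \bY_{i'_{-1}}$ and using $\EE(\bY_{i_{-1}} \bY_{i'_{-1}} \mid I) = \EE\bY_{i_{-1}} \cdot \EE \bY_{i'_{-1}}$ for $i_{-1} \neq i'_{-1}$, one arrives at the bias--variance split
\[ \EE(\bS_j^2 \mid I) = \EE(\bS_j \mid I)^2 + \sum_{i_{-1} \in I_j} \Var(\bY_{i_{-1}}). \]
The first piece evaluates to $(\sum_{i_{-1} \in I_j} w_{i_{-1}}^2)^2 \bV_1^2$, whose spectral norm is bounded by $\lambda^4 (\sum_{i_{-1} \in I_j} w_{i_{-1}}^2)^2$, since $\|\widetilde{\bu_1 \bu_1^\top}\|_F^2 = 1 - \sum_a u_{1a}^4 \le 1$ gives $\|\bV_1\| \le \lambda^2$. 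This produces the $\lambda^4$ term in \eqref{eq:sgbd1}.

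For the variance piece, decompose $\bY_{i_{-1}} - \EE\bY_{i_{-1}} = B_{i_{-1}} + C_{i_{-1}}$ with $B_{i_{-1}} = \lambda w_{i_{-1}} \widetilde{\bu_1 \bE_{i_{-1}}^\top + \bE_{i_{-1}} \bu_1^\top}$ and $C_{i_{-1}} = \widetilde{\bE_{i_{-1}} \bE_{i_{-1}}^\top}$. An entry-by-entry check shows $\EE(B_{i_{-1}} C_{i_{-1}}) = \EE(C_{i_{-1}} B_{i_{-1}}) = 0$, because every nonvanishing product reduces either to a triple $\EE(E_a E_b E_c)$ with distinct indices or to an expression containing a factor $\EE(E_a) = 0$. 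Direct quadratic computation using the identity $(\widetilde A \widetilde B)_{ab} = \sum_{c \notin \{a,b\}} A_{ac} B_{cb}$ then yields $\EE B_{i_{-1}}^2 = \lambda^2 w_{i_{-1}}^2[(d-2) \bu_1 \bu_1^\top + I]$ and $\EE C_{i_{-1}}^2 = (d-1) I$. Taking the spectral norm of the resulting PSD sum gives $\|\sum_{i_{-1} \in I_j} \Var(\bY_{i_{-1}})\| \le \lambda^2 d \sum_{i_{-1} \in I_j} w_{i_{-1}}^2 + |I_j| d$, and averaging over $j$ produces the remaining two terms in \eqref{eq:sgbd1}. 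The main obstacle is precisely this entrywise bookkeeping: because the diagonal-removal operator $\widetilde{\cdot}$ does not commute with matrix multiplication, one cannot simply quote $\EE(\bu_1 \bE^\top + \bE \bu_1^\top)^2 = (d+2)\bu_1\bu_1^\top + I$, but must redo the computation after zeroing the diagonals, treating on- and off-diagonal entries separately.
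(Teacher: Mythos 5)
Your proof is correct and yields the same bound. The paper obtains $\EE(\bS_j^2\mid I)$ by directly computing every entry of $\EE[\bY_{i_{-1}}\bY_{i'_{-1}}]$, treating diagonal versus off-diagonal entries and same-fiber versus cross-fiber pairs separately, then collecting terms. You instead begin with the conditional bias--variance identity $\EE(\bS_j^2\mid I)=\bigl(\EE(\bS_j\mid I)\bigr)^2+\sum_{i_{-1}\in I_j}\Var(\bY_{i_{-1}})$, valid because distinct mode-$1$ fibers involve disjoint entries of $\scrE$ and hence are independent; this disposes of the cross-fiber terms at once. You then split $\bY_{i_{-1}}-\EE\bY_{i_{-1}}=B_{i_{-1}}+C_{i_{-1}}$ into a piece linear in the noise and a piece quadratic in the noise whose cross moments vanish by odd-moment and independence arguments, and compute $\EE B_{i_{-1}}^2$ and $\EE C_{i_{-1}}^2$ separately. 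The residual entrywise bookkeeping is equivalent to the paper's, but your organization makes the provenance of each contribution more transparent, and your warning---that the identity $\EE(\bu_1\bE^\top+\bE\bu_1^\top)^2=(d+2)\bu_1\bu_1^\top+I$ for a noise fiber $\bE$ cannot be quoted directly because diagonal removal turns the $(d+2)$ into $(d-2)$---flags exactly the trap a careless reader might fall into. Your Frobenius-norm observation giving $\|\bV_1\|\le\lambda^2$ also makes explicit a step the paper leaves to the reader. Both routes are elementary; yours trades one extra decomposition step for a cleaner account of the cancellations.
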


To complete the proof, we now use the multinomial sample splitting scheme   to get high probability bounds on the above quantities. We write $G_j=\displaystyle\sum_{i_{-1}\in I_j}\bw^2_{i_{-1}}.$ By our sampling scheme, 
$\mathbbm{1}(i_{-1}\in I_j)\sim\mathrm{Bernoulli}({1\over{n}})$ so that $\EE G_j=\tfrac{1}{n}.$ Then, the scaled Chernoff bound \citep[see, e.g., Theorems 1 and 2 and the subsequent remark of][]{raghavan1988probabilistic} along with the definition of $\mu_1$ yields,
\begin{equation}\label{eq:w4max1}
	\PP\left(\max_{1\le j \le n}G_j>\frac{2}{n}\right)
	\le n\PP\left(\dfrac{|G_1-\EE G_1|}{\underset{i_{-1}\in I_1}{\max} \|\bw_{i_{-1}}\|^2}
	>\dfrac{\EE G_1}{\underset{i_{-1}\in I_1}{\max} \|\bw_{i_{-1}}\|^2}\right)
	\le n\exp\left(-\dfrac{1/n}{\mu_1^2}\right).
\end{equation}
By the sample partition scheme $$\dsum_{j=1}^n|I_j|=d^{p-1}\qquad {\rm and} \qquad \dsum_{j=1}^n\sum_{i_{-1}\in I_j}\bw^2_{i_{-1}}=\norm*{\bw}^2=1.$$ Using \eqref{eq:sgbd1} and \eqref{eq:w4max1} we then have 
\begin{equation}\label{eq:sgbd2}
	\sigma^2\le \dfrac{2\lambda^4}{n^2}+\dfrac{\lambda^2d}{n}+\dfrac{d^{p}}{n}
\end{equation}
with probability at least $1-2n\exp\left(-\dfrac{1}{2n\mu_1^2}\right).$ Notice also that the signal matrix is 
$$
\frac{1}{n}\sum_{j=1}^n\EE(\bS_j|I)
={1\over n}\left(\sum_j\sum_{i_{-1}\in I_j}\bw^2_{i_{-1}}\right)\bV_1
={\lambda^2\over n}(\bu_k\bu_k^\top-\mathrm{diag}(\bu_k\bu_k^\top)).
$$
Now applying \eqref{eq:one_step_bound} with $\delta=1/d$, together with the noise bound from \eqref{eq:sgbd2} and using Davis-Kahan theorem, we have
\begin{align*}
	\sin\angle(\hat{\bv}_k,\,\bu_k)
	\le & 2\|\bu_k\|_{\ell_\infty}^2+\dfrac{2\sigma}{\lambda^2/n}\sqrt{\dfrac{2\log d}{n}}
	\\\le & 2\mu_2^2+2\left(2+\dfrac{(\lambda\sqrt{d}+d^{p/2})\sqrt{n}}{\lambda^2}\right)\sqrt{\dfrac{2\log d}{n}}
	\\\le& 2\mu_2^2+4\sqrt{\dfrac{2\log d}{n}}+\dfrac{(\lambda\sqrt{d}+d^{p/2})\sqrt{8\log d}}{\lambda^2}
\end{align*}
with probability at least $1-2n\exp\left(-\dfrac{1}{2n\mu_1 ^2}\right)-\dfrac{1}{d}.$
\end{proof}

\vspace{20pt}

%
\begin{proof}[Proof of Proposition \ref{pr:iter}] 
	Since $\bx_k^{[0]}$ are unit vectors that are independent of $\mathscr{E},$ $\bE=\mathscr{E}\times_{j\neq k}\bx_j^{[0]}$ is a $d\times 1$ vector whose entries are independent random variables with $\EE E_{i}=0$ and $\EE E_{i}^2=1$. Moreover, $\EE |E_i|^\alpha <\infty$ by Rosenthal inequality. Thus
	$$
	\PP\left(\sup_{\bv:\norm*{\bv=1}}\mathscr{E}\times_{j\neq k}\bx_j^{[1]}\times_k\bv>C\sqrt{d}/\delta^{1/\alpha}\right)
	\le \PP\left(\norm*{\bE}>C\sqrt{d}/\delta^{1/\alpha}\right)
	\le \dfrac{(\sum_i\EE E_i^2)^{\alpha/2}}{Cd^{\alpha/2}/\delta}=\delta.$$
	where we use Rosenthal inequalities in the last step.
	Therefore
	$$\begin{aligned}
		\sin\angle\left(\bx_k^{[1]},\bu_k\right)
		= &
		\underset{\bv:\|\bv\|=1,\,\bv\perp \bu_k}{\sup}|\langle\bx_k^{[1]},\bv\rangle |
		\le  
		\underset{\bv:\|\bv\|=1,\,\bv\perp \bu_k}{\sup} \dfrac{\mathscr{E}\times_{j\neq k}\bx_j^{[0]}\times_k\bv}
		{\|\mathscr{X}\times_{j\neq k}\bx_j^{[0]}\|}
		\\\le &
		\dfrac{\norm*{\mathscr{E}\times_{j\neq k}\bx_j^{[0]}}}{\lambda\displaystyle\prod_{j\neq k}\abs*{\langle \bx_j^{[0]},\bu_j\rangle }-\norm*{\mathscr{E}\times_{j\neq k}\bx_j^{[0]}\times_k\bu_k}}
		\\\le & \dfrac{C\sqrt{d}/\delta^{1/\alpha}}{\lambda(1-\eta^2)^{(p-1)/2}-\sqrt{d}/\delta^{1/\alpha}}
		\le\dfrac{C\sqrt{d}/\delta^{1/\alpha}}{\lambda(1-\eta^2)^{(p-1)/2}}.
	\end{aligned}$$
	with probability at least $1-\delta,$ provided  $\lambda(1-\eta^2)^{(p-1)/2}\ge 2\sqrt{d}/\delta^{1/\alpha}$.
\end{proof}

\vspace{20pt}
\begin{proof}[Proof of Theorem \ref{th:main2}] Note that the vector $\bu_{p,J_2}$ is the $p$-th mode of $\scrX_2$. By the scaled Chernoff bounds \citep[see, e.g., Theorems 1, 2 and subsequent remark of ][]{raghavan1988probabilistic},
$$
\PP\left(\|\bu_{p,J_2}\|<\dfrac{1}{2}\right)
=\PP\left(\abs*{\sum_{i=1}^d\left((\bu_p)_i\right)^2\mathbbm{1}(B_i=0)-\dfrac{1}{2}}>\dfrac{1}{4}\right)\le  \exp\left(-\dfrac{1/16}{\underset{i}{\max}\left((\bu_p)_i\right)^2}\right)\le Cd^{-1}
$$
so that $\|\bu_{p,J_2}\|\ge 0.5$ with high probability. By Theorem \ref{th:robust1} we have initializations $\bv_k$ independent of $\scrX_2$, satisfying \eqref{eq:init} for some constant $\eta<1$.

We immediately have from proposition \ref{pr:iter} that
$$
\PP\left(\sin\angle(\hat{\bu}_{p, J_2},\bu_{p,J_2}/\|\bu_{p,J_2}\|)
\le \dfrac{C\sqrt{d}}{\lambda t}\right)\ge 1-t^{\alpha}.
$$

\noindent We consider the first mode $\bu_1$ next. Since the unit initialization vectors $\hat{\bv}_k$ are independent of $\scrX_2$ for $k=2,\dots,p-1$ the matrix $\bE=\mathscr{E}_{2}\times_2\hat{\bv}_k\dots\times_{p-1}\hat{\bv}_{p-1}$ satisfies $\bE_{ij}$ are independent, $\EE\bE_{ij}=0,\,\EE\bE_{ij}^2=1.$ By Rosenthal inequalities, $\EE|E_{ij}|^\alpha<\infty$.

Moreover $\bw_2=\bE\bu_{p,J_2}/\|\bu_{p,J_2}\|$ again has independent entries with the same properties. By Theorem \ref{th:norm_upper} and Talagrand's concentration inequality, we have 
$$
\PP\left(\|\bE\|>Cd^{\tfrac{1}{2}+\tfrac{1}{\alpha\mathbbm{1}(\alpha <4)}}/t\right)\le t^{\alpha}.
$$
Under this event,
$$
\begin{aligned}
\underset{\bv:\|\bv\|=1,\,\bv\perp \bu^{(1)}}{\sup}& \mathscr{E}_{2}\times_1\bv\times_2\hat{\bv}_2\dots \times_{p-1}\hat{\bv}_{p-1}\times_p\hat{\bu}_{p, J_2}
\\&\le \|\bE(\bu_{p,J_2}/\|\bu_{p,J_2}\|)\|+\|\bE\|\|\hat{\bu}_{p, J_2}-(\bu_{p,J_2}/\|\bu_{p,J_2}\|)\|
\\&\le \|\bw_2\|+\|\bE\|\cdot\dfrac{C\sqrt{d}}{\lambda}
\\&\le C\sqrt{d/t^2}+C\cdot\dfrac{d^{\tfrac{1}{2}+\tfrac{1}{\alpha\mathbbm{1}(\alpha <4)}}}{t}\cdot\dfrac{\sqrt{d}}{\lambda}
\le C\sqrt{d}/t.
\end{aligned}
$$
with probability at least $1-t^{\alpha}$. The second last inequality uses the upper bounds on $\|\bw\|$ and $\|\bE\|$. The last inequality now follows since $\lambda>Cd^{\tfrac{1}{2}+\tfrac{1}{\alpha\mathbbm{1}(\alpha <4)}}$. Hence for any  $\delta>0,$
$$
\begin{aligned}
&	\underset{\bv:\|\bv\|=1,\,\bv\perp \bu^{(1)}}{\sup}
	\dfrac
	{\mathscr{E}_2\times_1\bv\times_2\hat{\bv}_2\dots \times_{p-1}\hat{\bv}_{p-1}\times_p\hat{\bu}_{p, J_2}}
	{
	\|\scrX_2\times_2\hat{\bv}_2\dots \times_{p-1}\hat{\bv}_{p-1}\times_p\hat{\bu}_{p, J_2}\|}\\
&\le \dfrac{C\sqrt{d}/t}{\lambda\|\bu_{p,J_2}\|(1-\eta^2)^{(p-1)/2}-C\sqrt{d}/t}\\
&\le \dfrac{C\sqrt{d}}{\lambda t}
\end{aligned}
$$
with probability at least $1-t^{\alpha}$. The proof for the rest of the modes follows similarly. Finally, initializing with $\scrX_2$ and using $\scrX_1$ for optimal estimation, we also have $\hat{\bu}_{p,J_1}$ that is a rate  optimal estimator of $\bu_{p,J_1}.$ This finishes the proof.
\end{proof}

\bibliographystyle{plainnat}
\bibliography{references}

\appendix
\section{Proof of Lemma \ref{lem:mat_means}}\label{subsec:lems}

\begin{proof}
	We write $\bw=(\bu_2\,\odot\,\bu_3\,\dots\odot\,\bu_p).$ By definition
	$$\bS_j=\dsum_{i_{-1}\in I_j}\left(\bX_{i_{-1}}\bX_{i_{-1}}^\top-\mathrm{diag}(\bX_{i_{-1}}\bX_{i_{-1}}^\top)\right).$$
	Notice that $$\EE(\bX_{i_{-1}}\bX_{i_{-1}}^\top)=\lambda^2\bw^2_{i_{-1}}\bu_1\otimes\bu_1+I,$$ which implies
	$$
	\EE(\bS_j|I)=\sum_{i_{-1}\in I_j}\lambda^2\bw^2_{i_{-1}}(\bu_1\otimes\bu_1-\mathrm{diag}(\bu_1\otimes\bu_1))=\left(\sum_{i_{-1}\in I_j}\bw^2_{i_{-1}}\right)\bV_1.
	$$
	Next, for any $i_{-1}\in I_j$ and $s,t\in[d],$ $s\neq t$
	$$
	\begin{aligned}
		&\EE\left[\left(\bX_{i_{-1}}\bX_{i_{-1}}^\top-\mathrm{diag}(\bX_{i_{-1}}\bX_{i_{-1}}^\top)\right)^2_{st}\right]\\
		=&\EE\left(\sum_{l\neq s,\,t}(\bX_{i_{-1}})_s(\bX_{i_{-1}})_t(\bX_{i_{-1}})_l^2\right)
		\\=& \EE(\bX_{i_{-1}})_s\EE(\bX_{i_{-1}})_t\sum_{l\neq s,\,t}\EE(\bX_{i_{-1}})_l^2
		\\=& \lambda^2\bw^2_{i_{-1}}(\bu_1)_s(\bu_1)_t\sum_{l\neq s,\,t}(\lambda^2\bw^2_{i_{-1}}(\bu_1)_l^2+1)
		\\=&\lambda^4\bw_{i_{-1}}^4(\bu_1)_s(\bu_1)_t(1-(\bu_1)_s^2-(\bu_1)_t^2)+\lambda^2(d-2)\bw^2_{i_{-1}}(\bu_1)_s(\bu_1)_t.
	\end{aligned}
	$$
	On the other hand,
	$$
	\begin{aligned}
		&\EE\left[\left(\bX_{i_{-1}}\bX_{i_{-1}}^\top-\mathrm{diag}(\bX_{i_{-1}}\bX_{i_{-1}}^\top)\right)^2_{ss}\right]\\
		=&\EE\left(\sum_{l\neq s}(\bX_{i_{-1}})^2_s(\bX_{i_{-1}})_l^2\right)
		\\=&(\lambda^2\bw^2_{i_{-1}}(\bu_1)_s^2+1)\sum_{l\neq s} (\lambda^2\bw^2_{i_{-1}}(\bu_1)_l^2+1)
		\\=&\lambda^4\bw^4_{i_{-1}}(\bu_1)_s^2(1-(\bu_1)_s^2)+\lambda^2\bw^2_{i_{-1}}(1+(d-2)(\bu_1)_s^2)+d-1.
	\end{aligned}
	$$
	Collecting all the terms,
	$$
	\begin{aligned}
		\EE[\left(\bX_{i_{-1}}\bX_{i_{-1}}^\top-\mathrm{diag}(\bX_{i_{-1}}\bX_{i_{-1}}^\top)\right)^2]
		=&\bw^4_{i_{-1}}\bV_{1}^{2}+\lambda^2\bw^2_{i_{-1}}(d-2)\bu_1\bu_1^\top+[(d-1)+\lambda^2\bw^2_{i_{-1}}]I_d.
	\end{aligned}
	$$
	Similarly for $i_{-1,1}\neq i_{-1,2}\in I_j,$ and indices $s,t\in[d],$
	$$
	\begin{aligned}
	\EE&\left[\left(\bX_{i_{-1,1}}\bX_{i_{-1,1}}^\top-\mathrm{diag}(\bX_{i_{-1,1}}\bX_{i_{-1,1}}^\top)\right)\left(\bX_{i_{-1,2}}\bX_{i_{-1,2}}^\top-\mathrm{diag}(\bX_{i_{-1,2}}\bX_{i_{-1,2}}^\top)\right)\right]_{st}
	\\&=\lambda^4\bw^2_{i_{-1,1}}\bw^2_{i_{-1,2}}(\bu_1)_s(\bu_1)_t(1-(\bu_1)_s^2-(\bu_1)_t^2\mathbbm{1}(s\neq t))
	\end{aligned}
	$$
	meaning 
	$$
	\EE\left[\left(\bX_{i_{-1,1}}\bX_{i_{-1,1}}^\top-\mathrm{diag}(\bX_{i_{-1,1}}\bX_{i_{-1,1}}^\top)\right)\left(\bX_{i_{-1,2}}\bX_{i_{-1,2}}^\top-\mathrm{diag}(\bX_{i_{-1,2}}\bX_{i_{-1,2}}^\top)\right)\right]=\bw^2_{i_{-1,1}}\bw^2_{i_{-1,2}}\bV_1^2.
	$$
	Adding the terms above,
	\begin{align*}
	\EE(\bS_j^2|I)=&\left(\sum_{i_{-1}\in I_j}\bw^2_{i_{-1}}\right)^2\bV_1^2+\lambda^2(d-2)\bu_1\bu_1^\top\sum_{i_{-1}\in I_j}\bw^2_{i_{-1}}+\sum_{i_{-1}\in I_j}[(d-1)+\lambda^2\bw^2_{i_{-1}}]I_d
	\\=&\left(\sum_{i_{-1}\in I_j}\bw^2_{i_{-1}}\right)^2\bV_1^2+\lambda^2\left[(d-2)\bu_1\bu_1^\top+I_d\right]\sum_{i_{-1}\in I_j}\bw^2_{i_{-1}}+(d-1)|I_j|I_d.
	\end{align*}
	Consequently, conditional on $I$,
	\begin{align*}
	\sigma^2=&\norm*{\dfrac{1}{n}\sum_{j=1}^n\EE(\bS_j^2|I)}
	\\=&\dfrac{1}{n}\norm*{\sum_{j=1}^n\left[\left(\sum_{i_{-1}\in I_j}\bw^2_{i_{-1}}\right)^2\bV_1^2+\lambda^2\left[(d-2)\bu_1\bu_1^\top+I_d\right]\sum_{i_{-1}\in I_j}\bw^2_{i_{-1}}+(d-1)|I_j|I_d\right]}
	\\\le &\, \lambda^4\left(\dfrac{1}{n}\sum_{j=1}^n\left(\sum_{i_{-1}\in I_j}\bw^2_{i_{-1}}\right)^2\right)+\dfrac{\lambda^2d}{n}\sum_{j=1}^n\sum_{i_{-1}\in I_j}\bw_{i_{-1}}^2+{d\over n}\sum_{j=1}^n|I_j|.
	\end{align*}
\end{proof}
	
\end{document}